\definecolor{red}{rgb}{0.7,0.15,0.15}
\definecolor{green}{rgb}{0,0.5,0}
\definecolor{blue}{rgb}{0,0,0.7}
\makeatletter \@addtoreset{equation}{section}
\newtheorem{theorem}{Theorem}[section]
\newtheorem{assumption}[theorem]{Assumption}
\newtheorem{lemma}[theorem]{Lemma}
\newtheorem{proposition}[theorem]{Proposition}
\newtheorem{definition}[theorem]{Definition}
\newtheorem{remark}[theorem]{Remark}
\def\no{\noindent}
\def\beq{\begin{eqnarray}}
\def\eeq{\end{eqnarray}}
\def\be*{\begin{eqnarray*}}
\def\ee*{\end{eqnarray*}}
\def \E{\mathbb{E}}
\def \F{\mathbb{F}}
\def \L{\mathbb{L}}
\def \N{\mathbb{N}}
\def \P{\mathbb{P}}
\def \Q{\mathbb{Q}}
\def \R{\mathbb{R}}
\def\Ac{{\cal A}}
\def\Bc{{\cal B}}
\def\Cc{{\cal C}}
\def\Ec{{\cal E}}
\def\Fc{{\cal F}}
\def\Lc{{\cal L}}
\def\Pc{{\cal P}}
\def\Sc{{\cal S}}
\def\Wc{{\cal W}}
\def\x{\times}
\def\eps{\varepsilon}
\def\Om{\Omega}
\def\Omh{\widehat{\Omega}}
\def\0{\mathbf{0}}
\def \Ec{\mathcal{E}}
\def \xb{\mathbf{x}}
\def \mub{\overline{\mu}}
\def\normeL2#1{\left\|{#1}\right\|_{L^2}}
\def\Xh{\widehat X}
\def \Lim{\displaystyle\lim}
\def \hax {\hat{X}}
\def \1{\mathds{1}}
\def \d{{\rm d}}
\def \ep{\hbox{ }\hfill$\Box$}
\def \tq {g_0^{c'}}
 \title{Mean field game of mutual holding with defaultable agents, 
 \\ and systemic risk}
\author{
 Mao Fabrice Djete\footnote{\'Ecole Polytechnique Paris, Centre de Math\'ematiques Appliqu\'ees, mao-fabrice.djete@polytechnique.edu. This work benefits from the financial support of the Chairs {\it Financial Risk} and {\it Finance and Sustainable Development}} \and Gaoyue Guo\thanks{Universit\'e Paris--Saclay  CentraleSup\'elec, MICS and CNRS FR-3487, gaoyue.guo@centralesupelec.fr. }
    \and Nizar Touzi\thanks{Ecole Polytechnique Paris, Centre de Math\'ematiques Appliqu\'ees,
        nizar.touzi@polytechnique.edu. This work benefits from the financial support of the Chairs {\it Financial Risk} and {\it Finance and Sustainable Development}. }
    }
             \date{\today}
\begin{document}

\maketitle
 
\begin{abstract}
We introduce the possibility of default in the mean field game of mutual holding of \citeauthor*{touzidjete21} \cite{touzidjete21}. This is modeled by introducing absorption at the origin of the equity process. We provide an explicit solution of this mean field game. Moreover, we provide a particle system approximation, and we derive an autonomous equation for the time evolution of the default probability, or equivalently the law of the hitting time of the origin by the equity process. The systemic risk is thus described by the evolution of the default probability.
\end{abstract}
%\newpage

\vspace{3mm}
\no{\bf Keywords.} Mean field McKean-Vlasov stochastic differential equation, hitting time coupling, mean field game, default probability. 

\vspace{3mm}
\no{\bf MSC2010.} 60K35, 60H30, 91A13, 91A23, 91B30.

\section{Introduction}\label{sec:intro}

This paper addresses the strategic interaction between economic agents in order to build a structural model for the equilibrium equity processes observed on financial markets, and to better understand the most important determinants of systemic risk and default contagion.

Among the huge economic literature on this problem, we refer to \citeauthor*{allen2000financial} \cite{allen2000financial}, \citeauthor*{giesecke2004cyclical} \cite{giesecke2004cyclical}, \citeauthor*{shin2009securitisation} \cite{shin2009securitisation}, \citeauthor*{eisenberg2001systemic} \cite{eisenberg2001systemic}, and \citeauthor*{acemoglu2015systemic} \cite{acemoglu2015systemic}. We also refer to the financial regulation works on system-wide stress tests, which emerged from the last financial crisis, see \citeauthor*{aikman2019system} \cite{aikman2019system}. The main purpose of this paper is to provide an economic foundation for the agent based dynamics used in the simulation of the shock propagation addressed in this literature very connected with highly relevant practical regulatory decisions.

This paper builds on the mutual holding model introduced by \citeauthor*{touzidjete21} \cite{touzidjete21} which focuses on the strategic connection between economic agents through equity cross holding, while ignoring other (important) aspects as debt interconnection, which are still left for future work. Our main task in this paper is to introduce the possibility of default, which represents a major technical difficulty left open in the first work of \cite{touzidjete21} in this direction. 

We assume that default occurs at the first time that the underlying equity value hits the origin.  Given the idiosyncratic risk process driven by independent Brownian motions $W^i$, and defined by the drift and diffusion coefficients $b$ and $\sigma$, the microscopic dynamics of the equity value processes $(X^1,\ldots, X^N)$ of $N$ homogeneous defaultable economic agents are defined by,
\be*
\begin{split}
dX^i_t 
& = 
\1_{\{\tau_i>t\}}\Big\{b_t(X^i_t, \widehat\mu^N_{t})\mathrm{d}t 
                               + \sigma_t(X^i_t, \widehat\mu^N_t)\mathrm{d}W^i_t
%\\
%& ~~~~~~ 
 + \frac{1}{N}\sum_{j=1}^N \1_{\{\tau_j>t\}} 
           \big[\pi_t(X^i_t, X^j_t)\mathrm{d}X^j_t 
                  -\pi_t(X^j_t, X^i_t)\mathrm{d}X^i_t
           \big]
 \Big\},
\end{split} 
\ee*
where, similar to \cite{touzidjete21}, we denote $\widehat\mu^{N}_t := \frac{1}{N}\sum_{j=1}^N\delta_{X^j_t}$, and
\begin{itemize}
\item $\pi(t,X^k_t, X^\ell_t)$ denotes the number of shares of agent $\ell$ held by agent $k$ at time $t$, which is assumed to be a decision variable at the hand of Agent $k$,
\item the coefficients $b$ and $\sigma$ of the idiosyncratic risk process illustrate the anonymity with the population of agents by its dependence on the the empirical measure $\widehat\mu^{N}_t$,
\item as a new feature of the present paper, $\tau_i:=\inf\{t\ge 0: X^i_t\le 0\}$ is the default time of the $i-$th economic agent, which is modeled here by an absorption at the origin, meaning that no possibly recovery is considered in this model.
\end{itemize}
The natural mean field limit $N\to\infty$ suggested by this microscopic model is: 
\begin{equation} \label{controlled0}
dX_t 
\!=\!
\1_{\{\tau>t\}}\!
\Big\{b_t(X_t, \mu_t)\mathrm{d}t 
        \!+\!\sigma_t(X_t, \mu_t)\mathrm{d}W_t 
        \!+\!\widehat\E^{\mu}\big[\1_{\{\hat{\tau}>t\}} 
                                           \pi_t(X_t, \widehat{X}_t)
                                           \mathrm{d}\widehat{X}_t\big] 
        \!-\! \widehat\E^{\mu}\big[ \1_{\{\hat{\tau}>t\}} 
                                             \pi_t(\widehat{X}_t, X_t)\big]
                                             \mathrm{d}X_t
\Big\},
\end{equation}
where $(\widehat{X},\hat\tau)$ is an independent copy of $(X,\tau)$, and $\widehat\E^{\mu}$ denotes expectation operator conditional to $(X,\tau)$. 

Our first objective in this paper is to provide a solution of the mean field game of mutual holding as introduced in \cite{touzidjete21} with the above additional feature of defaultable agents. The coupling of the population through the hitting time of zero introduces additional technical difficulties as the equilibrium dynamics turn out to be characterized by nontrivial singularities of the coefficients (in Wasserstein space) which were not encountered in the previous non-defaultable model of \cite{touzidjete21}. Theorem \ref{thm:main} provides an explicit characterization of a mean field game equilibrium of mutual holding, thus extending the non-defaultable setting of \cite{touzidjete21}.

Our second main result concerns the systemic risk of the connected agents through mutual holding at equilibrium as described by the time evolution of the default probability or, equivalently, of the law of the default time of a representative agent. In addition to the expected particle system approximation induced by the propagation of chaos results of \Cref{thm:approx_regularity} and \Cref{prop:approx_poc}, we provide in \Cref{thm:more} an original expression of the equilibrium marginal law of the equity process in terms of the marginal law of the corresponding non-mean field and non-absorbed process. In particular, when the coefficients of the idiosyncratic risk are not coupled, this equation induces an autonomous equation for the default probability, which doest not require the full knowledge of the law of the equilibrium equity value. Due to the singularity of the coefficients of the equilibrium dynamics, we are unfortunately not able to exploit further this equation. However, when the drift coefficient of the idiosyncratic risk $b,\sigma$ have constant sign and are free from interaction, \Cref{thm:uniqueness} states that the default probability is uniquely defined by this equation, and we then deduce that our equilibrium dynamics have a unique strong solution. We obtain this result as a consequence of the uniqueness of a solution to our autonomous equation for the evolution of the default probability, which is proved by some delicate estimates on diffusions densities by the so-called Parametrix method.  

We notice that similar McKean-Vlasov stochastic differential equations (SDE) coupled through hitting time of the origin was considered in recent models by \citeauthor*{hambly2019spde} \cite{hambly2019spde}, \citeauthor*{nadtochiy2019particle},  \cite{nadtochiy2019particle}, \cite{nadtochiy2020particle}, \citeauthor*{DNS2019} \cite{DNS2019}, \citeauthor*{CRS2020} \cite{CRS2020}, \citeauthor*{bayraktar2020mckean}  \cite{bayraktar2020mckean}, \cite{bayraktar2022}. We also refer to the related literature modeling interacting economic agents by McKean-Vlasov SDEs, see e.g. \citeauthor*{garnier2013large} \cite{garnier2013large} \citeauthor*{carmona2013mean} \cite{carmona2013mean},  \citeauthor*{sun2022}  \cite{sun2022}.

The paper is organized as follows. The rigorous formulation of the mean field game of mutual holding with defaultable agents is reported in Section \ref{sec:pbfm}. Our main results are collected in Section \ref{sec:main}. The existence result of our equilibrium McKean-Vlasov SDE is proved in Section \ref{sec:proof1} by means of appropriate approximation arguments. In particular, Subsection \ref{sec:autonomous} contains the proof of our autonomous equation for the default probability. Finally, Section \ref{sec:uniqueness} specializes to the special case of constant sign non-interacting coefficients $b,\sigma$ of the idiosyncratic risk, and prove that the equilibrium McKean-Vlasov SDE has a unique strong solution in this case.

\medskip

\noindent {\bf Notations}\quad 
For a generic Polish space $E$, we denote $\Bc_E$ the collection of all Borel subsets of $E$, and by $\Pc(E)$ the collection of all probability measures $m:A\in\Bc_E\longmapsto m[A]\in[0,1]$. We denote similarly $m(f) := \int_{E}f\d m$, $m-$integrable function $f:E\longrightarrow\R$. 

For $q>0$, the subset $\Pc_{q}(E)\subset\Pc(E)$ consists of all measures $m$ with finite $q$-th moment $\int_E|x|^qm(\d x)<\infty$, and we denote by $\Wc_q$ the corresponding $q-$Wasserstein distance.

Finally, $C(E)$ is the space of continuous real-valued functions on $E$, and $C_b(E)$ denotes the corresponding subset of bounded functions. 

\section{Problem formulation}\label{sec:pbfm}

Let $\widehat{\Om}:=C(\R_+)$ be  endowed with the compact convergence topology. Denote by $\widehat{X}=(\widehat{X}_t)_{t\ge 0}$ its coordinate process and by $\widehat{\F}^0=(\widehat{\Fc}^0_t)_{t\ge 0}$ its natural filtration, i.e. $\widehat{X}_t(\hat{\mathsf x}) :=\hat{\mathsf x}(t)$ for  $\hat{\mathsf x}\in\widehat{\Om}$ and $\widehat{\Fc}^0_t:=\sigma(\widehat{X}_s, s\le t)$. Define further  $\widehat{\F}=(\widehat{\Fc}_t)_{t\ge 0}$ to be the  filtration given by   $\widehat{\Fc}_t:=\lim_{s\downarrow t}\widehat{\Fc}_s^U$ with $\widehat{\Fc}_t^U:=\cap_{\mu\in\Pc(\widehat{\Om})} (\widehat{\Fc}_t^0)^{\mu}$, where $(\widehat{\Fc}_t^0)^{\mu}$ denotes the completion of $\widehat{\Fc}_t^0$ by $\mu$.

\subsection{Equilibrium dynamics}\label{ssec:equidyn}

Throughout this paper, we fixed a probability distribution $\rho\in \Pc_{q}(\R_+)$ such that
$\mbox{Supp}(\rho)\subset (0,\infty)=:\R_+^*$, for some $q>1$, and we denote by $\Pc_S\subset \Pc(\widehat{\Om})$ the subset of probability measures $\mu$ under which $\widehat{X}$ is an It\^o process, absorbed at zero, and satisfying the dynamics 
\be* 
\mu\circ \widehat{X}_0^{-1}=\rho
&\mbox{and}&
\Xh_t = \Xh_0 + \int_0^t \1_{\{\widehat{X}_s>0\}}  B^{\mu}_s \mathrm{d}s + \int_0^t \1_{\{\widehat{X}_s>0\}} \Sigma^{\mu}_s \mathrm{d}W^{\mu}_s,
 \quad  \forall t\ge 0,
\ee* 
for some $\mu-$scalar Brownian motion $W^{\mu}$, and $\widehat{\F}-$progressively maps $B^{\mu}, \Sigma^{\mu}: \R_+\times \Omh\to \R$ satisfying 
\be*
\widehat\E^{\mu} \left[\int_0^t |B_s^{\mu}|^2 + |\Sigma_s^{\mu}|^2\mathrm{d}s \right] < \infty,\quad \forall 
t\ge 0.
\ee*
Introducing its first hitting time at the origin
$\hat\tau:=\inf\{t\ge 0: \Xh_t\le 0\}$,   
we may rewrite the last stochastic differential equation (SDE) as
\be* 
\Xh_t 
=
\Xh_0 + \int_0^t \1_{\{\hat\tau>s\}}  B^{\mu}_s \mathrm{d}s + \int_0^t \1_{\{\hat\tau>s\}} \Sigma^{\mu}_s \mathrm{d}W^{\mu}_s 
=\Xh_0 + \int_0^{\hat\tau \wedge t}  B^{\mu}_s \mathrm{d}s + \int_0^{\hat\tau \wedge t}   \Sigma^{\mu}_s \mathrm{d}W^{\mu}_s, 
 \quad \forall  t\ge 0.
\ee*
In order to introduce our mutual holding problem, we introduce the set of holding strategies $\Ac$ consisting of all measurable functions
$
\pi : [0, T] \times\R \times\R \longrightarrow [0, 1]$. Here, $\pi(t,x,y)$ represents the proportion of Agent $y$'s equity held by Agent $x$ at time $t$.

For $\mu\in\Pc_S$, and $\pi\in\Ac$, the mutual holding problem discussed in the introduction leads to the mean field SDE driven by Brownian motion $W$ on some filtered complete probability space $(\Omega,\Fc,\P)$: 
\begin{align*}
\begin{split}
X_t & = X_0 + \int_0^{\tau \wedge t} b(s,X_s, \mu_{s})\mathrm{d}s + \int_0^{\tau\wedge t} \sigma(s,X_s, \mu_{s})\mathrm{d}W_s \\
 &~~~~~~ + \widehat\E^{\mu}\left[\int_0^{\tau \wedge t}\1_{\{\hat{\tau}>s\}} \pi(s, X_s, \widehat{X}_s)\mathrm{d}\widehat{X}_s\right] -  \int_0^{\tau\wedge t}\widehat\E^{\mu}\left[ \1_{\{\hat{\tau}>s\}} \pi(s, \widehat{X}_s, X_s)\right]\mathrm{d}X_s,\quad \forall t\ge 0,
\end{split} 
\end{align*}
where $\tau:=\inf\{t\ge 0: X_t\le 0\}$ and we use the notation 
\be* 
\widehat\E^\mu[F(X,\widehat{X})] :=\int_{\widehat\Om}F(X,\mathbf{\widehat x})\mu(\d\mathbf{\widehat x})
\ee* 
for all measurable functions $F: \widehat\Om^2\to\R$ with appropriate integrability. Rewriting the last SDE in differential form and collecting the terms in ``$\d X_t$'', we obtain for all $t\ge 0$
\beq \label{MF-SDE}
\mathrm{d}X_t  
= 
\1_{\{X_t>0\}} \bigg(\frac {b(t,X_t, \mu_{t}) 
                                       +\widehat\E^{\mu}\big[ \1_{\{\widehat{X}_t>0\}} 
                                                                           \pi(t, X_t, \widehat{X}_t)B^{\mu}_t\big]
                                      }
                                     {1 + \widehat\E^{\mu}\big[ \1_{\{\widehat{X}_t>0\}} \pi(t, \widehat{X}_t, X_t)\big]}
\mathrm{d}t 
+ 
\frac{\sigma(t,X_t, \mu_{t})}
       {1 + \widehat\E^{\mu}\big[ \1_{\{\widehat{X}_t>0\}} \pi(t, \widehat{X}_t, X_t)\big]} 
\mathrm{d}W_t \bigg).
\eeq
Define $\Pc_S(\pi)\subset \Pc_S$ to be the subset of $\mu$ such that \eqref{MF-SDE} has a weak solution $X$ with $\Lc(X)=\mu$. Hence, by identification, $B^{\mu}_t=B(t, \hat{\mathsf x}(t),\mu_t)$ and $\Sigma^\mu_t=\Sigma(t, \hat{\mathsf x}(t),\mu_t)$ must hold with
$$
    B(t,x,\mu_t) = \frac {b(t,x, \mu_{t}) + \widehat\E^{\mu}\left[ \1_{\{\widehat{X}_t>0\}} \pi(t, x, \widehat{X}_t)B(t, \widehat X_t,\mu_t)\right]}{1 + \hat\E^{\mu}\left[ \1_{\{\widehat{X}_t>0\}} \pi(t, \widehat{X}_t, x)\right]}, 
    ~~ \Sigma(t,x,\mu_t) = \frac{\sigma(t,x, \mu_{t})}{1 +\widehat\E^{\mu}\left[ \1_{\{\widehat{X}_t>0\}} \pi(t, \widehat{X}_t, x)\right]}.
$$

\subsection{Mean field game formulation}
 \label{ssec:MFGformulation}

Next, we introduce a mean field game (MFG) formulation to determine the mutual holding strategy. For $\pi\in\Ac$ and $\mu\in\Pc_\Sc(\pi)$, we introduce the deviation of the representative agent from $\pi\in\Ac$ to an alternative strategy $\beta\in\Ac$ by means of an equivalent probability measure $\P^\beta_{\pi,\mu}$ defined via  
\beq 
 \frac{\mathrm{d}\P^\beta_{\pi,\mu}}{\mathrm{d}\P}\Big |_{\Fc_t}
 =
 \exp\left({\int_0^t \psi_s\mathrm{d}W_s-\frac12\int_0^t  \psi_s^2\mathrm{d}s}\right)=:\Ec_t(\psi),\quad \forall t\ge 0,
\label{psi}
\eeq
where $\Fc_t:=\sigma(X_s, 0\le s\le t)$ and the stochastic process $\psi\equiv \psi^\beta_{\pi,\mu}$ is given as  
 \beq \label{psi}
 \psi_t
 :=
 \frac{\widehat\E^\mu\big[\1_{\{\hax_t>0\}}(\beta-\pi)(t,X_t,\hax_t)B(t,\hax_t, 
\mu_t)\big]}{\sigma(t,X_t,\mu_t)}.
 \eeq 
To ensure the wellposedness of $\psi$ and $\P^\beta_{\pi,\mu}$,  
we assume that the diffusion coefficient $\sigma$ is bounded away from zero. It follows from Girsanov's theorem that $(W^{{\pi,\mu,\beta}}_t:=W_t-\int_0^t \psi_s\mathrm{d}s)_{t\ge 0}$ is a Brownian motion under $\P^\beta_{\pi,\mu}$, so that the $\P^\beta_{\pi,\mu}-$dynamics of the  process $X$ are rewritten as
\begin{align*}
\begin{split}
X_t & = X_0+\int_0^{\tau \wedge t} b(s,X_s,\mu_s)\mathrm{d}s
 +\int_0^{\tau \wedge t} \sigma(s,X_s,\mu_s)\mathrm{d}W^{\pi,\mu,\beta}_s
  \\
 &~~~~~~ +\widehat\E^\mu\bigg[ \int_0^{\tau \wedge t} \1_{\{\widehat X_s>0 \}}\beta(s,X_s,\widehat X_s) \mathrm{d}\widehat X_s \bigg]
 - \int_0^{\tau \wedge t} \widehat\E^\mu\left[\1_{\{\widehat X_s>0 \}}\pi(s,\widehat X_s,X_s)\right] \mathrm{d}X_s,
\end{split} 
\end{align*}
thus mimicking the controlled dynamics in \eqref{controlled0}. Therefore, the representative agent seeks for the optimal mutual holding strategy by maximizing the criterion
 $J_{\pi,\mu}(\beta):=\E^{\P^\beta_{\pi,\mu}}[U(X_T)]$ 
 over $\Ac$, where $T>0$ is an arbitrary time horizon and $U:\R_+\to\R$ denotes a non-decreasing utility function.
\begin{definition}\label{def:MFG}
A pair $(\pi,\mu)\in\Ac\times\Pc_\Sc$ is called an MFG  equilibrium of the mutual holding problem if $\mu\in\Pc_\Sc(\pi)$ and $J_{\pi,\mu}(\pi)=\sup_{\beta\in\Ac} J_{\pi,\mu}(\beta)$.
\end{definition}

\section{Main results}
\label{sec:main}

The paper is concerned with the existence of an MFG equilibrium for the mutual holding problem where, in contrast with the previous work \cite{touzidjete21}, we introduce here the default risk upon hitting the zero equity value. Our main results show that the dynamics of the equilibrium equity process is defined by the following mean field SDE
\begin{align} \label{eq:MFG-equilibriumSDE}
    \mathrm{d}X_t
    =
    \1_{\{X_t>0 \}} \Big[ B(t,X_t,\mu_t) \mathrm{d}t + \Sigma(t,X_t,\mu_t) \mathrm{d}W_t \Big],\quad \mbox{with }\Lc(X_0)=\rho \mbox{ and } \mu_t=\Lc(X_t),\quad \forall t\ge 0, 
\end{align}
where $B,\Sigma:\R_+ \x \R \x \Pc_1(\R_+) \to \R$ are defined by
\begin{eqnarray}\label{BSigma}
\hspace{-4mm}B(t,x,m)
:=
\frac{(b+c_1)^+(t,x,m)}{1+m[\R_+^*]}-(b+c_1)^-(t,x,m) 
&\hspace{-2mm}\mbox{and}&\hspace{-2mm}
\Sigma(t,x,m)
:=
\frac{\sigma(t,x,m)}{1+m[\R_+^*] \1_{\{ B(t,x,m) > 0 \}}},
\end{eqnarray}
and $c_1: \R_+\x \Pc_1(\R_+) \to \R_+ $ is, in view of Lemma \ref{lemma:existence_c}, the unique solution of the equation
\beq\label{eq:charac_c}
    c_1(t,m)=\frac{1}{1+m[\R_+^*]}\int_{(0,\infty)} \big(c_1(t,m)+b(t,y,m)\big)^+m(\mathrm{d}y).
\eeq 
The measurable functions $b,\sigma:\R_+ \x \R \x \Pc_1(\R_+) \longrightarrow \R$ stand for the drift and volatility of the idiosyncratic equity process. Notice that both coefficients $B$ and $\Sigma$ exhibit singularities in the variables $x$ and $m$, so that the wellposedness of the last mean field SDE will be the first issue to be clarified.

Notice that the expressions \eqref{BSigma} and \eqref{eq:charac_c} are closely related to the corresponding expressions in \cite{touzidjete21} where the equity process was not absorbed at zero, i.e. no default, so that $m[\R_+^*]$ is replaced in the above expression by $m[\R]=1$ in their context, and our $c_1$ in \eqref{eq:charac_c} coincides with their map $c$.

We also observe that the coefficients of the last mean field SDE exhibit a dependence on $\mu_t[\R_+^*]=\mu[\tau>t]=1-F_\tau(t)$, where $F_\tau$ is the cdf of the hitting time of the origin $\tau$. This is a similar feature to the models studied by \citeauthor*{hambly2019spde} \cite{hambly2019spde}, \citeauthor*{nadtochiy2019particle} \cite{nadtochiy2019particle}, \cite{nadtochiy2020particle}, \citeauthor*{nadtochiy2019particle} \cite{DNS2019}, \citeauthor*{CRS2020} \cite{CRS2020}, \citeauthor*{bayraktar2020mckean} \cite{bayraktar2020mckean}, \cite{bayraktar2022}. 

\begin{remark}\label{rem:bconstantsign}
By a straightforward computation, we see that the expressions of the equilibrium coefficients take the following simple form when the idiosyncratic drift coefficient has a constant sign: 
\begin{itemize} 
\item If $b\le 0$, then $B=b$ and $\Sigma=\sigma$;
\item If $b> 0$, then $B(t,x,m)=\displaystyle\frac{b(t,x,m)+\int_{(0,\infty)}b(t,y,m)m(\d y)}{1 +m[\R_+^*]}$ and $\Sigma(t,x,m)=\displaystyle\frac{\sigma(t,x,m)}{1 +m[\R_+^*]}$;
\item If $b\equiv b(t,m)$ with constant sign, then $B=b$ and $\Sigma(t,x,m)=\displaystyle\frac{\sigma(t,x,m)}{1+m[\R_+^*] \1_{\{b(t,m)>0\}}}$.
\end{itemize}
\end{remark}

\subsection{Existence of an MFG equilibrium}\label{ssec:exisence}

We start by stating the conditions on the maps $b$ and $\sigma$.
\begin{assumption}\label{assum:bsigma}
For every $T>0$, $b,\sigma$ are Lipschitz in $(x,m)$, uniformly in $t\in[0,T]$, and $\sigma$ is bounded from below away from zero, uniformly in $(t,x,m)$. In addition,
\begin{itemize}
\item[\rm (i)] either for each $m\in \Pc_1(\R_+)$, $b(t,x,m) \le 0,$ for a.e. $(t,x) \in [0,T] \x\R$,
\item[\rm (ii)]or for all $\eta\ge 0$, $m\in\Pc_1(\R_+),$ and for a.e. $t \in [0,T],$ the Borel set
\be*
    \ell (t,m,\eta)
    :=
    \big\{
        x \in \R:(x',m') \mapsto\1_{\{b(t,x',m') + \eta > 0 \}}\mbox{ is continuous at the point } (x,m)
    \big\}
\ee* 
has full Lebesgue measure in $\R$, i.e. its complement is Lebesgue--negligible.
\end{itemize}
\end{assumption}

Our first result ensures the existence of solutions to \eqref{eq:MFG-equilibriumSDE}, and states the existence of an MFG equilibrium with a complete characterization of the corresponding equilibrium mutual holding strategy.  

\begin{theorem}\label{thm:main}
Let Assumption \ref{assum:bsigma} hold. Then for every $T>0, q>1$, and all initial law  $\rho\in\Pc_q(\R_+)$:

\vspace{1mm}

{\rm (i)} the mean field SDE \eqref{eq:MFG-equilibriumSDE} has a weak solution $X$ on $[0,T]$, with $\E[\sup_{0\le t\le T}|X_t|^q]<\infty$;

\vspace{1mm}

{\rm (ii)} in addition, if $U$ is nondecreasing and Lipschitz, any weak solution $X$ of \eqref{eq:MFG-equilibriumSDE} induces an MFG equilibrium $(\pi^\ast,\Lc(X))$ of the mutual holding problem where  $\pi^\ast(t,x,y):=\1_{\{B(t,y,\Lc(X_t))> 0\}}$.
\end{theorem}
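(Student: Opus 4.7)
The plan is to construct a weak solution of \eqref{eq:MFG-equilibriumSDE} by a two-step approximation. The main obstacle is the discontinuity of $\1_{\{B(t,x,m)>0\}}$ appearing in the denominator of $\Sigma$, combined with the implicit characterization of $c_1$ via \eqref{eq:charac_c}. For each $n\ge 1$, I would replace this indicator by a continuous surrogate $\phi_n\circ B(t,x,m)$, where $\phi_n:\R\to[0,1]$ is smooth, nondecreasing, and $\phi_n\uparrow \1_{(0,\infty)}$. Relying on Lemma~\ref{lemma:existence_c} for the existence and continuity of $c_1$ in $m$, the regularized coefficients $B^n,\Sigma^n$ are continuous in $(x,m)$ with $\Sigma^n$ bounded below away from zero by the same constant as $\sigma$. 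A standard Picard/Schauder fixed-point argument on $\Pc_q(C([0,T]))$ then yields a weak solution $X^n$ of the regularized absorbed mean field SDE with law $\mu^n$.

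\textbf{Tightness and limit identification.} The Lipschitz assumption on $(b,\sigma)$ propagates to linear growth of $B^n,\Sigma^n$ uniformly in $n$ (the bound $c_1\,m[\R_+^*]\le \int b^+(t,y,m)m(\d y)$, obtained directly from \eqref{eq:charac_c}, keeps $c_1$ under control), and BDG yields $\E[\sup_{0\le t\le T}|X^n_t|^q]\le C$ uniformly in $n$. Kolmogorov's criterion gives tightness of $\{\mu^n\}$ in $\Pc_q(C([0,T]))$, along a subsequence $\mu^n\to\mu$ in $\Wc_q$. Identifying $\mu$ as a weak solution of \eqref{eq:MFG-equilibriumSDE} requires passing to the limit of $\1_{\{B^n>0\}}$: under Assumption~\ref{assum:bsigma}\,(i) one has $c_1\equiv 0$, $B=b$ and $\Sigma=\sigma$, so no discontinuity is present; under Assumption~\ref{assum:bsigma}\,(ii), the continuity-set condition on $\{b+\eta>0\}$ ensures $\mu_t\otimes \d t$-a.e.\ continuity at the limit. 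A standard martingale-problem identification then closes the argument.

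\textbf{MFG equilibrium, Part (ii).} Given a weak solution $X$ with $\mu=\Lc(X)$ and $\pi^\ast(t,x,y)=\1_{\{B(t,y,\mu_t)>0\}}$, the first step is to verify $\mu\in\Pc_S(\pi^\ast)$. Substituting $\pi^\ast$ into \eqref{MF-SDE}, the conditional integrals reduce to
\begin{align*}
    \widehat\E^\mu\big[\1_{\{\widehat X_t>0\}}\pi^\ast(t,\widehat X_t,X_t)\big]
    &=
    \1_{\{B(t,X_t,\mu_t)>0\}}\,\mu_t[\R_+^*], \\
    \widehat\E^\mu\big[\1_{\{\widehat X_t>0\}}\pi^\ast(t,X_t,\widehat X_t)B(t,\widehat X_t,\mu_t)\big]
    &=
    \int_{(0,\infty)} B^+(t,y,\mu_t)\,\mu_t(\d y),
\end{align*}
and a case split on the sign of $b+c_1$, combined with \eqref{eq:charac_c}, shows the drift and diffusion of \eqref{MF-SDE} collapse exactly to $B$ and $\Sigma$ in \eqref{BSigma}. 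Next, for an arbitrary $\beta\in\Ac$, the Girsanov dynamics of $X$ under $\P^\beta_{\pi^\ast,\mu}$ inherit the $\beta$-independent diffusion $\Sigma(t,X_t,\mu_t)$ (only $\pi^\ast$ enters the denominator obtained from the collected $\d X$ terms), while the drift is
\begin{align*}
    \widetilde B^\beta(t,X_t)
    =
    \frac{b(t,X_t,\mu_t) + \widehat\E^\mu\big[\1_{\{\widehat X_t>0\}}\beta(t,X_t,\widehat X_t)B(t,\widehat X_t,\mu_t)\big]}{1 + \1_{\{B(t,X_t,\mu_t)>0\}}\mu_t[\R_+^*]}.
\end{align*}
Since $(\1_{\{B>0\}}-\beta)B\ge 0$ pointwise for every $\beta\in[0,1]$ (trivially by splitting on the sign of $B$), one has $\widetilde B^{\pi^\ast}\ge \widetilde B^\beta$. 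A one-dimensional SDE comparison with common diffusion $\Sigma$ and absorption at the origin then yields $X^{\pi^\ast}_t\ge X^\beta_t$ a.s., and the monotonicity of $U$ (with its Lipschitz growth providing the required integrability from the moment bound in (i)) concludes $J_{\pi^\ast,\mu}(\pi^\ast)\ge J_{\pi^\ast,\mu}(\beta)$.

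\textbf{Main obstacle.} The hardest step is the limit passage in the existence argument: the discontinuous indicator $\1_{\{B>0\}}$ in $\Sigma$ is a nonlocal discontinuity driven by the whole limit law $\mu$, which is precisely the reason for the technical Assumption~\ref{assum:bsigma}\,(ii). A secondary subtlety is the hitting-time coupling $t\mapsto \mu_t[\R_+^*]=\P[\tau>t]$, which is only càdlàg in general; care is required to preserve absorption at the default instant when identifying the limiting martingale problem.
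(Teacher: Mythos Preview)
Your approximation--tightness--identification outline matches the paper's strategy, though the paper executes it differently: rather than smoothing only $\1_{\{B>0\}}$ in $\Sigma$, it replaces the Heaviside $H$ by a smooth $H^n$ \emph{everywhere} (in the absorption mechanism and in every occurrence of $m[\R_+^*]=m(H)$), and works with an auxiliary non-absorbed process $Y^n$ whose mean-field SDE has globally Lipschitz coefficients, setting $X^n:=Y^n\1_{\{\inf Y^n>0\}}$ a posteriori. This bypasses your appeal to a ``standard Picard/Schauder fixed point'' for absorbed mean-field SDEs, which is less routine than you suggest because of the running-minimum dependence.

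\textbf{Part (ii).} Here you take a genuinely different route, and there is a gap. The paper proves optimality of $\pi^*$ by a BSDE verification: it introduces the backward SDE $\d Y_t=Z_t\,\d X_t-H_t(X_t,\mu_t,Z_t)\,\d t$, $Y_T=U(X_T)$, shows $Z\ge 0$ from the monotonicity of $U$ so that the generator vanishes and $Y_0=\E^\P[U(X_T)]$, and then bounds $J_{\pi^*,\mu}(\beta)\le Y_0$ directly from the definition of the Hamiltonian. Your drift-comparison argument is more elementary in spirit, but two points are missing. First, in the weak formulation of Section~\ref{ssec:MFGformulation} the deviation $\beta$ does not produce a second process: it is the \emph{same} $X$ under the equivalent measure $\P^\beta_{\pi^*,\mu}$, so ``$X^{\pi^*}_t\ge X^\beta_t$ a.s.'' has no direct meaning; you would have to couple two solutions on a common space driven by one Brownian motion and then transfer the conclusion to the laws via weak uniqueness. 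Second, and more seriously, the common diffusion $\Sigma(t,\cdot,\mu_t)$ has a jump discontinuity in $x$ across $\{b(t,\cdot,\mu_t)+c_1(t,\mu_t)=0\}$ (the denominator switches between $1$ and $1+\mu_t[\R_+^*]$), so the Yamada--Watanabe-type modulus hypothesis behind the standard Ikeda--Watanabe comparison fails. Making this work under Assumption~\ref{assum:bsigma}\,(ii) would require a Nakao/Le~Gall-type pathwise uniqueness and comparison result for discontinuous one-dimensional diffusion coefficients, which is not routine and is not addressed. The paper's BSDE argument sidesteps both issues: it works with the single process $X$ under all the measures $\P^\beta_{\pi^*,\mu}$ and never needs pathwise comparison or strong solutions.
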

The proof of Theorem \ref{thm:main} (i) is based on an approximation argument reported in Section \ref{sec:proof1}, and differs from that in \cite{touzidjete21} by the additional difficulty induced by the lack of regularity (in Wasserstein distance) of $B$ and $\Sigma$ in $m$ and the possible degeneracy of the SDE \eqref{eq:MFG-equilibriumSDE}. As for (ii), we follow the same argument as in \cite{touzidjete21}, which we now report for completeness.

\proof[Proof of Theorem \ref{thm:main} {\rm (ii)}]
Fix an arbitrary (weak) solution $X$ to \eqref{eq:MFG-equilibriumSDE} and define the pair $(\pi^\ast,\mu)$, where $\pi^\ast(t,x,y):=\1_{\{ B(t,y,\mu_t) > 0\}}$, $\mu:=\Lc(X)$ and $\mu_t:=\Lc(X_t)$. Consider the optimization problem
 $V_0:=\sup_{\beta\in\Ac} J_{\pi^*\!,\mu}(\beta)$, where we recall that $J_{\pi^*\!,\mu}(\beta)
 :=
 \E^{\P_{\pi^*\!,\mu}^\beta}\big[U(X_T)\big]$  
and the dynamics of $X$ in terms of the $\P_{\pi^*\!,\mu}^\beta$--Brownian motion $W^{\pi^*\!,\mu,\beta}$ are given by
 \be*
 \mathrm{d}X_t
 =
 \1_{\{ X_t > 0 \}}\frac{\big[b(t,X_t,\mu_t)+\int_{(0,\infty)}\beta(t,X_t,y)B(t,y,\mu_t)\mu_t(\mathrm{d}y)\big]\mathrm{d}t+\sigma(t,X_t,\mu_t)\mathrm{d}W^{\pi^*\!,\mu,\beta}_t}
        {1+\mu_t(\R_+^*)\1_{\{B(t,X_t,\mu_t)> 0\}}},
 ~~ 
 t\in [0,T]. 
 \ee*
The Hamiltonian corresponding to this control problem is:
 \be*
 H_t(x,m,z)
 :=
  \1_{\{ x > 0\}}\frac{ z\,b(t,x,m)+\int_{(0,\infty)}(zB(t,y,m))^+m(\mathrm{d}y)}
        {1+m(\R_+^*)\1_{\{B(t,x,m)> 0\}}},
 ~~
 (t,x,m)\in [0,T]\times\R\times\Pc_1(\R_+),
 \ee*
with maximizer 
 $\widehat\beta(t,y,m,z):=\1_{\{zB(t,y,m)> 0\}}$. The value function can be characterized by means of the following backward SDE, which is the non-Markovian substitute of the HJB equation:
 \be*
 \mathrm{d}Y_t
 &\hspace{-2mm}=&\hspace{-2mm}
 Z_t \mathrm{d}X_t-H_t(X_t,\mu_t,Z_t)\mathrm{d}t,  ~t\in[0,T],~\mbox{and}~Y_T=U(X_T),~\mu-\mbox{a.s.}
 \\
 &\hspace{-4mm}=&\hspace{-4mm}
 \1_{\{ X_t > 0\}} \!\Big[
 F_t(X_t,Z_t)\mathrm{d}t
 +Z_t\Sigma(t,X_t,\mu_t)\mathrm{d}W^{\pi^*\!\!,\mu,\hat\beta}_t\Big],
 ~~
 F_t(x,z)
 :=
 \frac{ \int_{(0,\infty)}[zB^+\!-\!(zB)^+](t,y,\mu_t)\mu_t(\mathrm{d}y)}{1+\mu_t(\R_+^*)\1_{\{B(t,x,\mu_t)> 0\}}},
 \ee*
This backward SDE has a unique solution $(Y,Z)$ satisfying 
$\E\big[\sup_{t\le T}|Y_t|^2+\int_0^T|Z_t|^2\d t\big]<\infty$. Moreover, arguing as in \cite{touzidjete21}, we see that the nondecrease of the function $U$ implies that $Z\ge 0$. Then $F_t(X_t,Z_t)\equiv 0$, and we obtain $
 Y_t = \E^\P\big[U(X_T)|\Fc_t\big],$ $t\in [0, T].$
In order to complete the proof, we now show that $\E^\P[U(X_T)] = J_{\pi^*,\mu}(\pi^*) \ge J_{\pi^*,\mu}(\beta)$, for all $\beta\in\Ac$. Indeed, using the representation of $U(X_T)$ in terms of $Y$:
 \be*
 J_{\pi^*\!,\mu}(\beta)
 &=&
 \E^{\P_{\pi^*\!,\mu}^\beta}\big[ U(X_T) \big]
 \;=\;
 Y_0
 + \E^{\P_{\pi^*\!,\mu}^\beta}\bigg[\int_0^T Z_t \mathrm{d}X_t-H_t(X_t,\mu_t,Z_t) \mathrm{d}t \bigg].
 \ee*
Substituting the dynamics of $X$, it follows from appropriate localization that
  \be*
 J_{\pi^*,\mu}(\beta)
 &\hspace{-3mm}=&\hspace{-3mm}
 Y_0
 + \E^{\P_{\pi^*\!\!,\mu}^\beta}
   \!\Big[\int_0^{T}\!\!\!\Big(\1_{\{ X_t >0 \}}Z_t\frac{ b(t,X_t,\mu_t)
   \!+\!\int_{(0,\infty)}\beta(t,X_t,y)B(t,y,\mu_t)\mu_t(\mathrm{d}y)}
    {1+\mu_t(\R_+^*)\1_{\{B(t,X_t,\mu_t)> 0\}}}
    - H_t(X_t,\mu_t,Z_t)                     \Big)\mathrm{d}t                         \Big]
 \\
 &\hspace{-3mm}\le&
 Y_0 \;=\; \E^\P\big[U(X_T)\big],
 \ee*
where the last inequality follows from the definition of $H$. 
By the same argument, we see that the control process $\pi^*\in\Ac$ allows to reach the last upper bound $J_{\pi^*\!\!,\mu}(\pi^*)=\E^\P\big[U(X_T)\big]$.
\ep

\begin{remark}
Similar to \cite{touzidjete21}, we may use the equilibrium mutual holding strategy of Theorem \ref{thm:main} in order to build an approximate Nash equilibrium for the finite population mutual holding game. Due to the coupling by the hitting time of the origin, this would require nontrivial technical developments. We refrain from exploring this question in the present paper, and we leave it for future work. Instead, our focus in the next subsections is mainly on the evolution of the default probability which describes the systemic risk induced by the equilibrium connection between the agents. 
\end{remark}

\subsection{Autonomous characterization of the default probability}\label{ssec:characterization}

In general, the uniqueness of the solution to \eqref{eq:MFG-equilibriumSDE} may not hold. Nevertheless, we may always select a solution that admits an autonomous characterization of its default probability. Given a solution $X$ of \eqref{eq:MFG-equilibriumSDE}, we define $\mu_t:=\Lc(X_t)$ and we consider the map $c=(c_0,c_1):[0,T]\longrightarrow(0,1]\times\R_+$, defined by:
\beq \label{eq:c}
   c_{0}(t):=\mu_t\big[\R_+^*\big],
   ~~\mbox{and}~~
   c_{1}(t):=c_1(t,\mu_t),
\eeq 
where $c_1(t,\mu_t)$ is defined in  \eqref{eq:charac_c} and $1-c_0(t)$ is the probability that the representative agent defaults prior to time $t$. Alternatively, $1-c_0(t)=\mu[\tau\le t]$ is the cumulative distribution function (cdf) of the default time $\tau$ defined as the equity process hitting time of the origin.

Introduce further the coefficients $B^{\mu,c}, \Sigma^{\mu,c}: [0,T]\x \R \to \R$ with frozen distribution dependence by 
\be*
B^{\mu,c}(t,x)
:=
\frac{\big(b(t,x,\mu_t)+c_1({t})\big)^+}{1+c_0({t})}
-\big(b(t,x,\mu_t)+c_1({t})\big)^-,
&\mbox{and}&
\Sigma^{\mu,c}(t,x)
:=
\frac{\sigma(t,x,\mu_t)}{1+c_0({t})\1_{\{B^{\mu,c}(t,x)> 0\}}}.
\ee*
Notice that the above functions are all related to the given solution $X$ through its marginal laws $\mu_t$; this dependence is omitted for notation simplicity. 

\begin{theorem}\label{thm:more}
Let Assumption \ref{assum:bsigma} hold true. Then, there exist a weak solution $X$ to \eqref{eq:MFG-equilibriumSDE} and a stochastic process  $R^{t,x}$ satisfying the SDE  
\beq \label{eq:SDE}
R^{t,x}_t=x &\mbox{and}& \d R^{t,x}_u = B^{\mu,c}(u, R^{t,x}_u)\d u + \Sigma^{\mu,c}(u,R^{u,x}_u)\d W_u,\quad \forall u\in [t,T],
\eeq 
such that, denoting $\overline{B}^{\mu,c}(t,r):=\1_{\{r>0\}}\big(1,B^{\mu,c}(t,r)^+\big)$, we have
\beq\label{eq:c}
c(s)
&=&
\E\Big[\overline{B}^{\mu,c}\Big(s,R^{0,X_0}_s\Big)\Big]
- \int_{c_{0}(s)}^1\E\Big[\overline{B}^{\mu,c}\Big(s,R^{c_{0}^{-1}(u),0}_s\Big)\Big]\d u,\quad 
\mbox{for a.e.}~ s \in [0,T],
\eeq
where $c^{-1}_0$ is the left--continuous inverse of the non-increasing function $c_0$.
\end{theorem}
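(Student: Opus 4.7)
The plan is to start from a weak solution $X$ to \eqref{eq:MFG-equilibriumSDE} produced by Theorem \ref{thm:main}(i), freeze its marginal flow $\mu_t=\Lc(X_t)$ together with the associated pair $c=(c_0,c_1)$ defined in \eqref{eq:c}, and interpret the theorem as a distributional identity that links the absorbed process $X$ to the non-absorbed time-inhomogeneous diffusion $R^{t,x}$ of \eqref{eq:SDE}. The two essential ingredients are a coupling forcing $X$ and $R^{0,X_0}$ to agree up to the default time, and the strong Markov property of $R$ at its first hit of the origin.

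\textbf{First}, I would reduce \eqref{eq:c} to two scalar identities. Since $\overline{B}^{\mu,c}(s,r)=\1_{\{r>0\}}(1,B^{\mu,c}(s,r)^+)$, the first coordinate amounts to $c_0(s)=\P[X_s>0]$, which is immediate from $c_0(s)=\mu_s[\R_+^*]$. For the second, inspecting the two cases in \eqref{BSigma} according to the sign of $b+c_1$ yields $B^{\mu,c}(s,y)^+=(b(s,y,\mu_s)+c_1(s))^+/(1+c_0(s))$ for every $y\in\R$, so integrating against $\mu_s$ on $(0,\infty)$ and invoking \eqref{eq:charac_c} at $m=\mu_s$ gives the equivalent form $c_1(s)=\E[\1_{\{X_s>0\}}B^{\mu,c}(s,X_s)^+]$. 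Next I would solve the frozen SDE \eqref{eq:SDE}: with $(\mu,c)$ fixed, $B^{\mu,c}$ is bounded and Lipschitz in $x$, while $\Sigma^{\mu,c}$ is bounded away from $0$. Under Assumption \ref{assum:bsigma}(i), Lemma \ref{lemma:existence_c} forces $c_1\equiv 0$ so that $\Sigma^{\mu,c}=\sigma$ is Lipschitz and classical theory yields strong existence and pathwise uniqueness of $R^{t,x}$. Under (ii), the discontinuity of $x\mapsto\1_{\{B^{\mu,c}(t,x)>0\}}$ is confined to a Lebesgue-null set for a.e.\ $t$, so the one-dimensional Engelbert-Schmidt/martingale-problem criterion still gives weak existence and uniqueness in law, making $R^{t,x}$ a strong Markov diffusion.

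\textbf{Second}, on a common (possibly enlarged) filtered space, I would construct $R^{0,X_0}$ driven by the same Brownian motion $W$ as $X$ and started from $X_0$. Writing $\tau=\inf\{u:X_u\le 0\}$ and $\tau_R:=\inf\{u:R^{0,X_0}_u\le 0\}$, both processes solve the same SDE on $[0,\tau\wedge\tau_R]$, so they coincide there; in particular $\tau=\tau_R$ and $X_s=R^{0,X_0}_s\1_{\{s<\tau_R\}}$ for every $s$. For a bounded Borel $f:\R\to\R$, setting $\overline f(r):=\1_{\{r>0\}}f(r)$, this coupling gives
\begin{align*}
\E\bigl[\overline f(R^{0,X_0}_s)\bigr]
&=\E\bigl[\1_{\{\tau_R>s\}}\overline f(R^{0,X_0}_s)\bigr]+\E\bigl[\1_{\{\tau_R\le s\}}\overline f(R^{0,X_0}_s)\bigr]\\
&=\E\bigl[\1_{\{X_s>0\}}f(X_s)\bigr]+\int_0^s \E\bigl[\overline f(R^{r,0}_s)\bigr]\,d\bigl(1-c_0(r)\bigr),
\end{align*}
where the second equality uses the strong Markov property of $R^{0,X_0}$ at $\tau_R$, the identity $R^{0,X_0}_{\tau_R}=0$, and $\P[\tau_R\le r]=\P[\tau\le r]=1-c_0(r)$. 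The change of variables $u=c_0(r)$, using the left-continuous inverse to accommodate possible flat portions of $c_0$, turns the integral into $\int_{c_0(s)}^1\E[\overline f(R^{c_0^{-1}(u),0}_s)]\,du$. Rearranging and specializing to $f\equiv 1$ and $f(y)=B^{\mu,c}(s,y)^+$ delivers the two components of \eqref{eq:c}.

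The main technical obstacle I expect is step one under Assumption \ref{assum:bsigma}(ii), where the discontinuity of $\Sigma^{\mu,c}$ makes the pathwise coupling and the strong Markov step delicate. A natural remedy is to mollify $B^{\mu,c}$ and $\Sigma^{\mu,c}$ (preserving boundedness and uniform non-degeneracy), derive \eqref{eq:c} at the smoothed level where uniqueness is unambiguous, and pass to the limit by exploiting the Lebesgue-a.e.\ continuity provided by \ref{assum:bsigma}(ii) together with tightness of the laws of the approximating $R$ and of the corresponding first hitting times. Some care is also required when $c_0$ is not strictly decreasing in the Lebesgue-Stieltjes change of variables, but this is precisely the role of the left-continuous inverse.
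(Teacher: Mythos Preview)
Your approach is correct in spirit and genuinely different from the paper's. The paper does not work with a fixed weak solution and a direct probabilistic first-passage decomposition. Instead it uses the smooth approximation $X^n$ of Theorem \ref{thm:approx_regularity}, for which the coefficients $B^n,\Sigma^n$ are regular; at that level it derives the identity by a purely analytic argument (Proposition \ref{prop:uniqueness}): writing the Fokker--Planck density $p^n$ of the absorbed process and the backward Kolmogorov density $g^n$ of the non-absorbed process, the paper integrates the relation $\partial_t(p^ng^n)+\partial_x(\cdot)=0$ over $(0,s)\times(0,\infty)$, picks up the boundary flux $\partial_x(A^np^n)(t,0)=-\dot c^n_0(t)$ at $x=0$, and obtains the density formula \eqref{pn}. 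It then integrates against $\overline B^n$ and passes to the limit $n\to\infty$ using the convergence results of Theorem \ref{thm:approx_regularity} to get \eqref{eq:c} for the \emph{particular} limit solution $X$.

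Your probabilistic route (couple $X$ with $R^{0,X_0}$ until the first hit of $0$, then apply the strong Markov property of $R$) is the natural ``first-passage decomposition'' and yields the same identity more transparently, indeed for every bounded test function $f$ (hence the density formula \eqref{eq:density_limit} as well). Two points deserve care. First, under Assumption \ref{assum:bsigma}(ii) the sentence ``both processes solve the same SDE on $[0,\tau\wedge\tau_R]$, so they coincide there'' presumes pathwise uniqueness, whereas you only invoke uniqueness in law; the clean fix is to \emph{define} $\tilde R$ by following $X$ on $[0,\tau]$ and then solving the $R$-SDE from $(\tau,0)$, so that $\Lc(\tilde R)=\Lc(R^{0,X_0})$ by weak uniqueness and the decomposition goes through without any pathwise coupling. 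Second, the reference to Engelbert--Schmidt is slightly off (that criterion is driftless); the relevant fact is that in dimension one, bounded measurable drift and bounded, uniformly non-degenerate diffusion give well-posedness of the martingale problem (Stroock--Varadhan/Krylov), hence uniqueness in law and the strong Markov property for $R^{t,x}$. With these adjustments your argument is complete; the paper's PDE/approximation route has the advantage of reusing the machinery already built for Theorem \ref{thm:approx_regularity} and of avoiding any direct regularity analysis of the frozen SDE, while your approach is more self-contained and conceptually sharper.
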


\begin{remark}
{\rm (i)} The proof of the last result actually provides an expression of the marginal distributions of the process $X$ in terms of those of the process $R$, see \eqref{eq:limit_equation}. However as the coefficients of the SDE defining the process $R$ depend on $\mu$, this representation is not explicit. 

\vspace{1mm}
\noindent {\rm (ii)} In the situation where the coefficients of the idiosyncratic risk process are independent of the distribution variable, i.e. $b\equiv b(t,x)$ and $\sigma\equiv \sigma(t,x)$, the characterization of Theorem  \ref{thm:more} is particularly useful. In this case, the coefficients $B^{\mu,c}$ and $\Sigma^{\mu,c}$ depend on $\mu$ only through $c$, and we denote in this case $(B^{\mu,c},\Sigma^{\mu,c})\equiv (B^{c},\Sigma^{c})$. Theorem \ref{thm:more} provides here an autonomous equation for the function $c$. In particular, the probability of default $1-c_0({t})$ does not require the full knowledge of $\Lc(X)$. See Subsection \ref{sec:numerics} for a numerical illustration. Moreover, once $c$ is determined, the equilibrium dynamics \eqref{eq:MFG-equilibriumSDE} would reduce to a standard SDE without mean field interaction in the case where  \eqref{eq:MFG-equilibriumSDE} admits a unique solution.
\end{remark}

When the drift $b$ has constant sign, recall from Remark \ref{rem:bconstantsign} that the coefficients $B$ and $\Sigma$  contain no singularity in $(t,x)$, and the solution of \eqref{eq:SDE} is unique. Together with the last characterization result, this observation allows to establish the uniqueness of the solution to \eqref{eq:MFG-equilibriumSDE} under additional conditions. 

\begin{theorem}\label{thm:uniqueness}
Let the conditions of Theorem \ref{thm:more} hold, and assume further that
\begin{itemize}
\item[\rm (i)] $b\equiv b(t,x)$ has constant sign, is differentiable in $x$ with bounded derivative, and H\"older-continuous in $t$;
\item[\rm (ii)] $\sigma\equiv \sigma(t,x)$ is twice differentiable in $x$ with bounded $\partial_x\sigma, \partial^2_{xx}\sigma$, and H\"older-continuous in $t$;
\item[\rm (iii)] $\rho(\d x)=p_0(x)\d x$, with continuous density $p_0$satisfying $\sup_{\lambda>0} \int_{0}^\infty \big(p_0(\lambda x)e^{-x^2} +x^{-2}p_0(x)\big)\d x<\infty$.
\end{itemize}
Then the mean field SDE \eqref{eq:MFG-equilibriumSDE} has a unique strong solution with corresponding maps $c$ uniquely defined by \eqref{eq:c}, and $c^0$ continuous and strictly decreasing.
\end{theorem}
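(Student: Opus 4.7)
The plan is to argue in two steps: first, to prove that the autonomous equation \eqref{eq:c} admits at most one solution $c=(c_0,c_1)$, and second, to observe that once $c$ is fixed, equation \eqref{eq:MFG-equilibriumSDE} reduces to a classical SDE with coefficients depending only on $(t,x)$, for which standard strong well-posedness applies.

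By Remark \ref{rem:bconstantsign} and the fact that $b,\sigma$ are measure-independent, the coefficients of \eqref{eq:SDE} take the simple form $(B^c,\Sigma^c)(t,x)=(b(t,x),\sigma(t,x))$ when $b\le 0$ (with $c_1\equiv 0$), and $(B^c,\Sigma^c)(t,x)=\bigl((b(t,x)+c_1(t))/(1+c_0(t)),\,\sigma(t,x)/(1+c_0(t))\bigr)$ when $b>0$. Under assumptions (i)--(ii), both $B^c$ and $\Sigma^c$ inherit the H\"older regularity in $t$ and the Lipschitz/$C^2$ regularity in $x$ of $b,\sigma$ (provided $c$ is continuous), and $\Sigma^c$ is bounded away from zero. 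Hence the Parametrix construction of \textsc{Friedman}--type applies and produces a continuous transition density $p^c(t,x;s,y)$ for the diffusion $R^{t,x}$, together with Gaussian upper bounds and the quantitative Lipschitz-in-$c$ estimate
\[
\bigl|p^{c^1}(t,x;s,y)-p^{c^2}(t,x;s,y)\bigr|\;\le\; C\,\|c^1-c^2\|_{\infty,[t,s]}\,(s-t)^{-1/2}\exp\bigl(-\kappa(y-x)^2/(s-t)\bigr),
\]
obtained by differentiating the Parametrix series term by term in the parameters $c_0,c_1$, using that these enter the coefficients only through the smooth expressions $1/(1+c_0)$ and $c_1/(1+c_0)$.

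Define $\Phi(c)$ as the right-hand side of \eqref{eq:c}; each component of $\Phi(c)(s)$ is a double integral of $\overline{B}^c(s,\cdot)$ against $p_0\otimes p^c$ (first term) and an integral against $-\mathrm{d}c_0$ after the change of variable $u=c_0(t)$ (second term). Assumption (iii) on $p_0$ ensures these integrals are well-defined and controls both the rescaling $\int p_0(\lambda x)e^{-x^2}\mathrm{d}x$ arising from the Gaussian kernel with the initial data, and, via $\int_0^\infty x^{-2}p_0(x)\mathrm{d}x<\infty$, the singularity near $t=0$ of $p^c(c_0^{-1}(u),0;s,\cdot)$ that appears once the change of variable is performed. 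Combining these ingredients with the short-time smoothing factor $(s-t)^{-1/2}$ yields that $\Phi$ is a contraction on $C([0,\delta];(0,1]\times\R_+)$ for $\delta>0$ small enough, with contraction constant $O(\sqrt{\delta})$, so uniqueness of $c$ holds on $[0,\delta]$; a standard concatenation argument based on the Markov property of $R^{t,x}$ propagates uniqueness to $[0,T]$.

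With $c$ uniquely determined, the coefficients $B^c(t,x),\Sigma^c(t,x)$ of \eqref{eq:MFG-equilibriumSDE} are Lipschitz in $x$ and non-degenerate in $\Sigma^c$, so the absorbed diffusion has a unique strong solution $X$ on $[0,T]$. The Parametrix density for this absorbed diffusion is jointly continuous and strictly positive on $(0,T]\times\R_+^*$, which implies that the law of $\tau$ is atomless with full support in $(0,T]$; equivalently, $c_0$ is continuous and strictly decreasing. The main obstacle is the quantitative Lipschitz-in-$c$ Parametrix estimate: the error terms in the iterated convolutions of the frozen Gaussian kernel must be kept uniformly Gaussian-bounded when the coefficients $B^c,\Sigma^c$ are perturbed via $c$, and in the integrals defining $\Phi(c)(s)$ these bounds have to remain integrable against both $p_0$ and $-\mathrm{d}c_0$ near the singular points $x=0$ and $t=0$, which is precisely where condition (iii) is used.
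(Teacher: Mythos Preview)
Your overall plan---reduce uniqueness of $X$ to uniqueness of the map $c$, and obtain the latter by a local contraction on the autonomous equation---is exactly the paper's strategy. However, two of your key claims hide the real technical work and, as stated, do not produce a contraction.

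First, the pointwise Lipschitz-in-$c$ bound $|p^{c}-p^{c'}|\le C\|c-c'\|_\infty\,(s-t)^{-1/2}e^{-\kappa(y-x)^2/(s-t)}$ is too crude: integrating it against $p_0(x)$ and $\overline{B}^{c}(s,y)$ only yields $O(\|c-c'\|_\infty)$, with no small factor in $s$. What is actually needed is an \emph{integrated} estimate of the form $\big|\int_0^\infty \beta(s,y)(g^{c}-g^{c'})(t,x,s,y)\,\d y\big|\le C\sqrt{s-t}\,\|c-c'\|_s$ (Proposition \ref{lem1}), where the extra $\sqrt{s-t}$ comes from an integration by parts in $y$ that pushes a derivative onto the test function. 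Second---and this is the crux---you never address how to compare the Stieltjes integrals $\int_0^s(\cdots)\,\d c_0$ and $\int_0^s(\cdots)\,\d c_0'$: their difference contains a piece integrated against $\d(c_0-c_0')$, which is not controlled by $\|c-c'\|_\infty$ (nor is $|c_0^{-1}-(c_0')^{-1}|$, if you stay in the $u$-variable). The paper removes this obstacle by integrating by parts in $t$ \emph{before} comparing, so that $c_0(t)$ enters linearly but the integrand now involves $\partial_t g^{c}(t,0,s,y)$; the main labour is then to bound $\int_0^\infty\beta(s,y)\,\partial_t g^{c}$ and $\int_0^\infty\beta(s,y)\,\partial_t(g^{c}-g^{c'})$ term by term in the Parametrix series (estimates \eqref{ineq:stab3}--\eqref{ineq:stab6}), which is substantially harder than the density estimate you invoke. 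As a minor aside, the role of $\int_0^\infty x^{-2}p_0(x)\,\d x<\infty$ in condition (iii) is to give $\E[X_0^{-2}]<\infty$ and hence H\"older continuity of $c_0$ via Lemma \ref{lem:ito-martingale}; it is not about a singularity of $p^{c}$ near $t=0$.
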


\begin{remark}
A sufficient condition for $\sup_{\lambda>0} \int_{0}^\infty p_0(\lambda x)e^{-x^2}\d x<\infty$ is that $p_0(y)e^{-\frac{y^2}{\delta}}\underset{y\to\infty}{\longrightarrow} 0$, and $\int_0^\infty p_0(y)e^{-\frac{y^2}{\delta}}dy<\infty$. for some $\delta>0$. To see this, observe that
\begin{itemize}
\item $\sup_{\lambda\ge\sqrt{\delta}}\int_{0}^\infty p_0(\lambda x)e^{-x^2}\d x \le \sup_{\lambda\ge\sqrt{\delta}}\int_{0}^\infty p_0(\lambda x)\d x=\sup_{\lambda\ge\sqrt{\delta}}\frac1\lambda\rho[(0,\infty)]\le\frac1{\sqrt{\delta}}$,
\item Moreover, $\frac{\partial}{\partial\lambda}\int_{0}^\infty p_0(\lambda x)e^{-x^2}\d x  =\int_0^\infty x\dot p_0(\lambda x)e^{-x^2}\d x=-\frac1{2\lambda}\big[p_0(\lambda x)e^{-x^2}\big]_0^\infty+\frac1{2\lambda}\int_0^\infty p_0(\lambda x)e^{-x^2}\d x$, by direct integration by parts, so that our first condition implies that $\frac{\partial}{\partial\lambda}\int_{0}^\infty p_0(\lambda x)e^{-x^2}\d x >0$, and therefore $\sup_{\lambda\le\sqrt{\delta}}\int_{0}^\infty p_0(\lambda x)e^{-x^2}\d x = \int_{0}^\infty p_0(y)e^{\frac{-y^2}{\delta}}\frac{\d y}{\sqrt{\delta}}<\infty$ by our second condition.
\end{itemize}
\end{remark}
\subsection{Propagation of chaos}

As mentioned earlier, our strategy to prove Theorems \ref{thm:main} and \ref{thm:more} is to use an approximation argument. This section provides the concrete approximation which yields in particular a particle system approximation satisfying a result of propagation of chaos. Denoting by $H(x):=\1_{\{x>0\}}$ the Heaviside function, we introduce the following sequences:
\begin{itemize}
\item $(H^n)_{n\ge 1}\subset C^\infty(\R,[0,1])$ with ${\rm Supp}(H^n)\subset \R_+$ for all $n\ge 1$, and 
\be* 
H^n\longrightarrow H \mbox{ pointwisely on $\R$, and uniformly on } (-\infty,0]\cup [\eps,\infty) \mbox{ for all } \eps>0,
\ee*
for instance, we may take $H^n(x):=\1_{\{x>0\}}e^{-\frac{1}{nx}}$;
\item $(\rho^n)_{n\ge 1}\subset \Pc_q(\R_+)$ with ${\rm Supp}(\rho^n)\subset \R_+^* $, $\int_{\R_+} x^{-2}\rho^n(\mathrm{d}x)<\infty$, and converging to $\rho$ under $\Wc_q$,
one may consider for instance $\rho^n:=\Lc\big(\frac{|Z|}{n}+X_0\vee \frac1n\big)$, where $Z$ is a standard Gaussian random variable independent of $X_0$;

\item $(b^n,\sigma^n)_{n\ge 1}\subset C([0,T]\x\R\x\Pc_1(\R_+))$ with $(b^n,\sigma^n)\in C^\infty$ in $(t,x)$, Lipschitz in $m$, and converging to $(b,\sigma)$, uniformly in $(x,m)$ and a.e. in $t$.
%\item $(\sigma^n)_{n\ge 1}\subset C([0,T]\x\R\x\Pc_1(\R_+))$, with $\sigma^n$ is Lipschitz in $(m,x)$, and $\sigma^n\longrightarrow \sigma$ uniformly,
\end{itemize}
%see Remark \ref{rem:choice} for detailed constructions.  
For all $n \ge 1,$ we may define by Lemma \ref{lemma:existence_c}, the map $c^{n}_1:[0,T] \x \Pc_1(\R_+) \to \R_+$ as the solution of
\be*
c^{n}_1(t,m)=\frac{1}{1+m(H^n)} \int_{\R} \big(c^{n}_1(t,m)+b^n(t,x,m)\big)^+ H^n(x) m(\mathrm{d}x),
~\mbox{for all}~(t,m)\in [0,T] \x \Pc_1(\R_+),
\ee*
and we introduce the functions $B^n,\Sigma^n:[0,T] \x \R \x \Pc_1(\R_+) \longrightarrow \R$ by
\begin{equation}\label{BnSigman}
    B^n(\cdot,m)
    :=
    \frac{(b^n+c^{n}_1)^+(\cdot,m)}{1+m(H^n)} - (b^n+c^{n}_1)^-(\cdot,m),
    ~\mbox{and}~
    \Sigma^n(\cdot,m):=\frac{\sigma^n(\cdot,m)}{1+ m(H^n) H^n(B^n(\cdot,m))}.
\end{equation}

\begin{theorem} \label{thm:approx_regularity}
Let Assumption \ref{assum:bsigma} hold.

\vspace{1mm}
  
 \noindent {\rm (i)} For every $n \ge 1$, there exists a unique strong solution $(Y^n_t)_{t \in [0,T]}$ satisfying $\Lc(Y^n_0)=\rho^n$ and 
    $$
        \mathrm{d}Y^n_t
        \!=\!
        B^n\big(t,Y^n_t,\mub^n_t\big) \mathrm{d}t
        +
        \Sigma^n\big(t,Y^n_t,\mub^n_t\big)\mathrm{d}W_t, \mbox{ with }
        \mub^n_t \!:=\! \Lc\big(\overline{Y}^n_t\big),
        \mbox{ and }
        \overline{Y}^n_t \!:=\! Y^n_t H^n\big(I^n_t\big),
        ~I^n_t \!:=\inf_{0\le s\le t} Y^n_s.
    $$
    {\rm (ii)} We have $(H^n-H)(I^n_t)\longrightarrow 0$ in $\L^1$, for all $t \in [0,T]$;
    
    \vspace{1mm}
    
\noindent {\rm (iii)} Denoting $X^n:=Y^nH\big(I^n\big)$ and $\mu^n:=\Lc\big(X^n\big)$, the sequence $(\mu^n)_{n \ge 1} \subset \Pc\big(C([0,T])\big)$ is relatively compact in $\Wc_1$. 
    Furthermore, the limit $\mu$ of any convergent subsequence $({\mu}^{n_k})_k$ is a weak solution of \eqref{eq:MFG-equilibriumSDE}, and satisfies:
      \be*
    c^{n_k}_0(t)
    :=
    \mu^{n_k}_t(H^{n_k})
    \underset{k \to \infty} {\longrightarrow}
    c_0(t)
     :=
     \mu_t(\R_+^*) 
     &\mbox{and}& 
     c^{n_k}_1(t,\mu^{n_k}_t)\underset{k \to \infty} {\longrightarrow}
    c_1(t,\mu_t),
    ~\mbox{for all}~
    t \in [0,T].
    \ee*
\end{theorem}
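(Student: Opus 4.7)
For (i), for each fixed $n$, the coefficients $B^n$ and $\Sigma^n$ defined in \eqref{BnSigman} are bounded and globally Lipschitz continuous in $(x,m) \in \R \times \Pc_1(\R_+)$: the Lipschitz continuity in $m$ of $c^n_1(t,m)$ is inherited from its implicit defining equation via Lemma \ref{lemma:existence_c} together with the Lipschitz property of $b^n$ and $H^n$, while the regularizing lower bound $1 + m(H^n) H^n(\cdot) \ge 1$ removes any singularity in the denominators. I will set up the McKean--Vlasov problem as a Picard fixed point on $\Pc_q(C([0,T_0]))$ equipped with $\Wc_q$: for fixed $\mub\in \Pc_q(C([0,T_0]))$ used as a parameter, the frozen SDE admits a unique strong solution $Y^{\mub}$, and the map $\Phi(\mub) := \Lc\big(Y^{\mub}_\cdot\, H^n(\inf_{s\le \cdot} Y^{\mub}_s)\big)$ is shown to be a contraction for $T_0$ sufficiently small using BDG, the Lipschitz property of $H^n$, and the fact that the running infimum is a $1$-Lipschitz path functional. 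Iteration on successive intervals delivers the global strong solution on $[0,T]$.

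\textbf{Part (ii).} With the explicit choice $H^n(x) = \1_{\{x > 0\}} e^{-1/(nx)}$, one has $|(H^n - H)(I^n_t)| = \1_{\{I^n_t > 0\}}\big(1 - e^{-1/(n I^n_t)}\big)\le 1$. Splitting at the level $\delta_n := n^{-1/2}$, the contribution from $\{I^n_t \ge \delta_n\}$ is pointwise bounded by $1 - e^{-1/\sqrt{n}} \to 0$, while on $\{0 < I^n_t < \delta_n\}$ the integrand is controlled by $\P[0 < I^n_t < \delta_n]$. The plan is to establish a uniform bound $\P[I^n_t \in (0,\delta)] \le C\delta$ with $C$ independent of $n$, which I would obtain from uniform ellipticity of $\Sigma^n$ (inherited from the lower bound on $\sigma$), linear growth of $B^n$, a Girsanov reduction to a driftless diffusion, and the moment estimate $\int x^{-2}\rho^n(\d x) < \infty$ combined with classical reflection-principle bounds on the hitting time of zero.

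\textbf{Part (iii): tightness and identification.} Uniform moment bounds $\sup_n \E\big[\sup_{t\le T} |Y^n_t|^q\big] < \infty$ follow from BDG and Gr\"onwall thanks to the uniform-in-$n$ linear growth of $B^n,\Sigma^n$, which yields tightness and $\Wc_1$-relative compactness of $(\Lc(Y^n))_n$ and hence of $(\mu^n)_n$ on $\Pc(C([0,T]))$. Along any convergent subsequence $\mu^{n_k} \to \mu$, Skorohod representation gives a.s. path convergence of suitable copies, and part (ii) identifies $\mu = \lim \mub^{n_k}$ as well. Convergence $c^{n_k}_0(t) \to \mu_t(\R_+^*)$ follows from part (ii) together with the absence of an atom of $\mu_t$ at the origin (a consequence of ellipticity in the limiting SDE), and $c^{n_k}_1(t, \mu^{n_k}_t) \to c_1(t, \mu_t)$ is obtained by passing to the limit in the fixed-point equation \eqref{eq:charac_c} using $\Wc_1$-continuity of $b$, pointwise convergence $b^n\to b$, and the uniform boundedness of $c^n_1$.

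\textbf{Main obstacle.} The hard step is passing to the limit in the coefficients $B^{n_k},\Sigma^{n_k}$ inside the SDE, since the factor $H^n(B^n(\cdot,m))$ entering $\Sigma^n$ in \eqref{BnSigman} encodes, in the limit, the discontinuous indicator $\1_{\{B(t,x,m)>0\}}$. This is exactly what Assumption \ref{assum:bsigma}(ii) is tailored to handle: the continuity set $\ell(t,m,\eta)$ of $(x',m')\mapsto \1_{\{b(t,x',m')+\eta>0\}}$ has full Lebesgue measure, so that once the approximating laws converge in $\Wc_1$ and the time-$t$ marginal $\mu_t$ restricted to $\R_+^*$ is absolutely continuous (again by ellipticity), the composed coefficients converge Lebesgue-a.e.\ and $\mu$-a.s.\ along the subsequence. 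Combining this with the convergence of $(c^{n_k}_0,c^{n_k}_1)$ and the pathwise convergence from Skorohod, a standard martingale problem / stable convergence argument identifies $\mu$ as the law of a weak solution to \eqref{eq:MFG-equilibriumSDE}, concluding the proof. In case (i) of Assumption \ref{assum:bsigma} (i.e.\ $b\le 0$), the indicator disappears entirely (Remark \ref{rem:bconstantsign}), so this step simplifies considerably.
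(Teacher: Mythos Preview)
Your overall strategy matches the paper's closely for (i) and (iii), but there are two points worth flagging.

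\medskip
\textbf{Part (ii): a different route.} You propose a direct quantitative bound $\P[I^n_t\in(0,\delta)]\le C\delta$ uniform in $n$, via Girsanov, time-change and reflection-principle estimates. The paper instead argues softly: it passes to a convergent subsequence of $\Lc(Y^n)$, identifies the limit $Y^\infty$ as a non-degenerate diffusion (citing \cite[Proposition~9.1]{touzidjete21}), and then uses $\P[I^\infty_{t_\infty}=0]=0$ together with Portmanteau to get $\P[|I^n_{t_n}|\le\varepsilon]\to\P[|I^\infty_{t_\infty}|\le\varepsilon]\to 0$. Your route is plausible but more delicate than you indicate: under Assumption~\ref{assum:bsigma} the drift $B^n$ has only linear growth (not bounded), so the Girsanov density is not uniformly bounded and must be controlled via moment estimates; moreover, after time-change the random horizon $\langle Y^n\rangle_t$ is correlated with the driving Brownian motion, so the reflection-principle density bound for the running minimum does not apply directly. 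These issues can likely be handled, but the paper's compactness argument avoids them entirely at the cost of being non-quantitative.

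\medskip
\textbf{Part (iii): a genuine misstatement.} You write that convergence $c^{n_k}_0(t)\to\mu_t(\R_+^*)$ follows from ``the absence of an atom of $\mu_t$ at the origin''. This is false: $\mu_t=\Lc(X_t)$ with $X_t=Y^\infty_{t\wedge\tau^\infty}$, so $\mu_t(\{0\})=\P[\tau^\infty\le t]=1-c_0(t)>0$ for $t>0$. The correct no-atom statements, and the ones the paper actually uses, concern the \emph{unabsorbed} process $Y^\infty$: one needs $\P[I^\infty_t=0]=0$ (the running infimum of $Y^\infty$ has no atom at~$0$) and $\P[Y^\infty_t=a]=0$ for all $a$. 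These hold because $Y^\infty$ solves an SDE with non-degenerate diffusion coefficient, and they are exactly what makes the Portmanteau arguments go through when passing to the limit in $\mub^n_t(H^n)$ and in the indicators $\1_{\{I^n_t>0\}}$. You do state the right absolute-continuity fact later (``$\mu_t$ restricted to $\R_+^*$ is absolutely continuous''), so this may just be imprecise phrasing; but since every convergence step in (iii) hinges on these specific no-atom facts for $Y^\infty$ and $I^\infty$ (not for $X$ or $\mu$), the distinction is essential and should be stated correctly.
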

\begin{remark}\label{rem:choice}
{\rm (i)} As previously mentioned, we do not have a uniqueness result for the SDE \eqref{eq:MFG-equilibriumSDE}, and choosing different approximating coefficients may lead to different limits.

\medskip
\noindent {\rm(ii)} The convergence result of \Cref{thm:approx_regularity} (ii) implies that
$
    \lim_{n \to \infty} \Wc_q \left(\mub^n_t,\mu^n_t \right)=0,
$ for all $t \in [0,T].$
\end{remark}

We next introduce a particle system approximation which induces in particular an approximation method for the survival probability $c^0$. For all $N \ge 1$, let $(Y^{n,N,1},\cdots,Y^{n,N,N})$ be the process defined by the SDE  
\be* 
    % \label{eq:approx}
        &\mathrm{d}Y^{n,N,i}_t
        =
        B^n(t,Y^{n,N,i}_t,\mub^{n,N}_t) \mathrm{d}t
        +
        \Sigma^n(t,Y^{n,N,i}_t,\mub^{n,N}_t)\mathrm{d}W^i_t,&
        \\
        &\displaystyle \mbox{with }
        \mub^{n,N}_t:=\frac{1}{N} \sum_{i=1}^N\delta_{\overline{Y}^{n,N,i}_t},~ \overline{Y}^{n,N,i}_t:=Y^{n,N,i}_tH^n\big(I^{n,N,i}_t\big), ~
I^{n,N,i}_t:=\inf_{0\le s\le t} Y^{n,N,i}_s,&
\ee*
and started from any initial data satisfying $\overline{\mu}^{n,N}_0\longrightarrow\rho_n$, as $N\to\infty$, in $\Wc_q$.
The following propagation of chaos result is a direct consequence of \cite[Proposition 4.15]{djete2019general}. 

\begin{proposition} \label{prop:approx_poc}
Let $X^{n,N,i}:=Y_nH(I^{n,N,i})$. Then, under Assumption \ref{assum:bsigma}, we have $\frac1N\sum_{i=1}^N\delta_{X^{n,N,i}}\longrightarrow\mu^{n}$, as $N \to \infty$, in $\Wc_1$, for all $n\ge 1$.
    \end{proposition}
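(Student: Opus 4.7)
The plan is to verify the hypotheses of Proposition~4.15 of \cite{djete2019general} for the \emph{un-absorbed} particle system $(Y^{n,N,i})_{i=1}^N$, and then transfer the resulting propagation of chaos to $X^{n,N,i}=Y^{n,N,i}H(I^{n,N,i})$ by a continuous mapping argument; throughout, $n$ is fixed, so all regularity constants may depend on $n$.

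First, I would recast the SDE in the path-dependent McKean--Vlasov form of \cite{djete2019general}. Defining the functional $\Phi^n:C([0,T];\R)\to C([0,T];\R)$ by $\Phi^n(y)_t := y_t H^n\big(\inf_{0\le s\le t} y_s\big)$, the measure $\bar\mu^{n,N}_t$ driving the coefficients is the time-$t$ marginal of the pushforward of the path-wise empirical measure $\nu^{n,N}:=\tfrac1N\sum_{i=1}^N\delta_{Y^{n,N,i}_{\cdot}}$ under $\Phi^n$. Since $H^n\in C^\infty_b$ is bounded and Lipschitz, $\Phi^n$ is Lipschitz on $(C([0,T]),\|\cdot\|_\infty)$, whence $\nu\mapsto\big(t\mapsto \Phi^n(\cdot)_t\#\nu\big)$ is Lipschitz from $\Pc_1(C([0,T]))$ to $C([0,T];\Pc_1(\R))$ in $\Wc_1$. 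Combined with the Lipschitz regularity of $b^n,\sigma^n$ in $(x,m)$ (by construction of the approximation) and of $c^n_1(t,\cdot)$ in $m$ (inherited from the contraction underlying Lemma~\ref{lemma:existence_c}, by differentiating the fixed-point equation and using that $m\mapsto m(H^n)$ is Lipschitz in $\Wc_1$ for fixed $n$), the effective coefficients $B^n,\Sigma^n$ are Lipschitz in $(x,\nu)\in\R\times\Pc_1(C([0,T]))$. Together with the initial moment condition $\overline{\mu}^{n,N}_0\to\rho^n$ in $\Wc_q$ with $q>1$, this satisfies the hypotheses of \cite[Proposition~4.15]{djete2019general}, yielding
$$
\frac1N\sum_{i=1}^N\delta_{Y^{n,N,i}}\longrightarrow\Lc(Y^n)\quad\text{in }\Wc_1\big(\Pc_1(C([0,T]))\big).
$$

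To transfer this to $X^{n,N,i}$, I would apply a continuous mapping argument to the map $\Psi:C([0,T])\to C([0,T])$ defined by $\Psi(y)_t := y_t H(\inf_{s\le t} y_s)$. Since $H$ is discontinuous only at $0$, $\Psi$ is continuous at every path whose infimum over $[0,T]$ is distinct from $0$. As $Y^n$ is a uniformly elliptic diffusion (via $\sigma^n$ bounded below away from zero) started from $\rho^n$ supported in $\R_+^*$, the classical absence of atoms for the running-infimum distribution of a non-degenerate diffusion gives $\P(\inf_{[0,T]}Y^n=0)=0$, so $\Psi$ is $\Lc(Y^n)$-a.s.\ continuous. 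The pointwise bound $\|\Psi(y)\|_\infty\le\|y\|_\infty$ controls the $\Wc_1$ moments, so the continuous mapping theorem for $\Wc_1$ yields
$$
\frac1N\sum_{i=1}^N\delta_{X^{n,N,i}}=\Psi_\#\Big(\frac1N\sum_{i=1}^N\delta_{Y^{n,N,i}}\Big)\longrightarrow\Psi_\#\Lc(Y^n)=\mu^n\quad\text{in }\Wc_1.
$$
The main technical point is the $\Wc_1$-Lipschitz regularity of $c^n_1$ in $m$ for fixed $n$; once this is secured from the smoothness of $H^n$ and $b^n$ via an implicit-function-type argument applied to \eqref{eq:charac_c}, the remainder of the proof reduces to invoking \cite[Proposition~4.15]{djete2019general} and the continuous mapping theorem in Wasserstein space.
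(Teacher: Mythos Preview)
Your approach matches the paper's, which simply records the result as a direct consequence of \cite[Proposition~4.15]{djete2019general} without further argument. Your expansion---applying that proposition to the un-absorbed system $(Y^{n,N,i})$ via the path-dependent reformulation already used in the proof of Theorem~\ref{thm:approx_regularity}(i), and then pushing forward by $\Psi(y)_t=y_tH(\inf_{s\le t}y_s)$---is the natural way to unpack the citation, and the $\Wc_1$-Lipschitz regularity of $c^n_1$ in $m$ that you need is precisely Lemma~\ref{lemma:existence_c}(ii).

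One point to tighten: your claim that $\Psi$ is continuous at every path with $\inf_{[0,T]}y\ne 0$ is false as stated. A continuous path that touches $0$ at time $\tau$, rises back into $(0,\infty)$, and only later crosses into $(-\infty,0)$ has $\inf_{[0,T]}y<0$, yet $\Psi$ is discontinuous there (shift $y$ upward by $1/n$ and the absorption time jumps far to the right). What you actually need is that $y$ enters $(-\infty,0)$ in every right neighbourhood of $\tau(y)$; this holds $\Lc(Y^n)$-a.s.\ by the strong Markov property together with the uniform ellipticity of $\Sigma^n$ (a non-degenerate one-dimensional diffusion started at $0$ enters $(-\infty,0)$ immediately), so the conclusion survives once the continuity set is described correctly.
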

    
\subsection{A numerical example}
\label{sec:numerics}

In order to illustrate the effect of mutual holding, we end Section \ref{sec:main} by the following example where the Ornstein--Uhlenbeck SDE is used to model the idiosyncratic risk process. Namely, for $t\ge 0$
\be* 
\d \tilde X_t = \1_{\{\tilde X_t>0\}} \Big (( \lambda-\tilde X_t) \d t +  \d W_t\Big),
~\mbox{and}~
\d X_t = \1_{\{X_t>0\}} \Big (B^c(t,X_t) \d t +\Sigma^c(t,X_t)  \d W_t\Big),
\ee* 
where $\tilde X$ and $X$ stand for the dynamics of a representative agent without and with the mutual holding. In particular, the new drift and volatility functions $B^c, \Sigma^c :\R_+\times \R \to \R$ are defined by
\be*
B^{c}(t,x)
:=
\frac{\big(\lambda-x+c_1(t)\big)^+}{1+c_0({t})}
-\big(\lambda-x+c_1({t})\big)^- 
&\mbox{and}&
\Sigma^{c}(t,x)
:=
\frac{1}{1+c_0({t})\1_{\{x<\lambda+c_1(t)\}}},
\ee*
where, in view of \eqref{pn},  $c=(c_0,c_1)$ is a fixed point of the map $\overline \Lambda : c\mapsto \overline \Lambda[c]:=(\overline \Lambda_0[c], \overline \Lambda_1[c])$ defined by
\be* 
\overline \Lambda_0[c](s) &:=& \P\big[ R_{s}^{0,Z}>0\big] +\int_{0}^{s} \P\big[ R_{s}^{t,0}>0\big]\d c_0(t) \\
\overline \Lambda_1[c](s) &:=& \E\left[ \1_{\{R_{s}^{0,Z}>0\}}B^c({s},R_{s}^{0,Z})^+\right] +\int_{0}^{s}\E\left[ \1_{\{R_s^{{t},0}>0\}}B^c({s},R_{s}^{t,0})^+\right]\d c_0(t),
\ee* 
and $R^{t,x}$ is defined by the stochastic differential equation
\be*
R^{t,x}_s=x + \int_t^s B^{c}(u, R^{t,x}_u)\d u + \int_t^s \Sigma^{c}(u,R^{t,x}_u)\d W_u,\quad \forall s\ge t.
\ee*
We examine the evolution of the default probabilities $\tilde D(t):=\P[\tilde X_t\le 0]$ and $D(t):=\P[ X_t\le 0]$. Clearly,
\be*
 \tilde D(\infty)=1-\P\big[\tilde X_0+\lambda (e^{t}-1)+W_{e^{2t-1}}>0, ~ \forall t\ge 0\big]=1.
\ee*
By using similar arguments as in \Cref{lem:ito-martingale} (ii), we may also prove that $D(\infty)=1$.  We fix a time horizon $T>0$ and adopt the iteration $c^{k+1}:=\overline \Lambda[c^k]$ for all $k\ge 0$ with some initial guess $c^0$. To do so, we approximate $\overline \Lambda[c]$ by time discretization and Monte Carlo simulation, i.e. taking $M,N\in\N$, $\Delta t:=T/N$, $t_n:=n\Delta t$ and independent $Z_1,\ldots, Z_M$ random variables distributed according to some $\rho$, one has
\be* 
\overline \Lambda_0[c](t_n) 
&\!\!\!\!\approx &\!\!\!\!
 \P\big[ R_{t_n}^{0,Z}>0\big] + \sum_{k=0}^{n-1} \P\big[ R_{t_n}^{t_k,0}>0\big]\big(c_0(t_{k+1})-c_0(t_k)\big)\\
&\!\!\!\!\approx&\!\!\!\! \frac{1}{M}\sum_{m=1}^M \Big[ \1_{\{R_{m, t_n}^{0,Z_m}>0\}}+\sum_{k=0}^{n-1}  \1_{\{R_{m, t_n}^{t_{k},0}>0\}}\big(c_0(t_{k+1})-c_0(t_k)\big)\Big]\\
\overline \Lambda_1[c](t_n) 
&\!\!\!\!\approx &\!\!\!\! \E\left[ \1_{\{R_{t_n}^{0,Z}>0\}}B^c({t_n},R_{t_n}^{0,Z})^+\right] + \sum_{k=0}^{n-1} \E\left[ \1_{\{R_{t_n}^{t_k,0}>0\}}B^c({t_n},R_{t_n}^{t_k,0})^+\right]\big(c_0(t_{k+1})-c_0(t_k)\big)\\
&\!\!\!\!\approx& \!\!\!\!\frac{1}{M}\sum_{m=1}^M \Big[ \1_{\{R_{m, t_n}^{0,Z_m}>0\}}B^c\big(t_n,R_{m, t_n}^{0,Z_m}\big)^++\sum_{k=0}^{n-1}\1_{\{R_{m, t_n}^{t_{k},0}>0\}}B^c\big(t_n,R_{m, t_n}^{t_{k},0}\big)^+\big(c_0(t_{k+1})-c_0(t_k)\big)\Big], 
\ee* 
where $R^{t_k,x}_{m,t_k}:=x$ and $R^{t_k,x}_{m,t_{i+1}}:=R^{t_k,x}_{m,t_i} +  \Delta t B^{c}(t_i, R^{t_k,x}_{m,t_i}) + \sqrt{\Delta t} \Sigma^{c}(t_i,R^{t_k,x}_{m,t_i})G_{m,i+1}$ for  $i=k,\ldots, N-1$, and 
 $(G_{m,n}: 1\le m\le M, 1\le n\le N)$ are $MN$ independent standard Gaussian random variables. Notice that we are ignoring here the major difficulty related to the discontinuity of $\Sigma^c$, as this is not the main concern of the present paper.

With $T=10$ and $\rho(\d x)= \1_{\{x>0\}}e^{-x}\d x$, we illustrate in Figure \ref{iter} the convergence of our iteration. 
\begin{figure}[!t]
\centering
\includegraphics[width=0.6\textwidth]{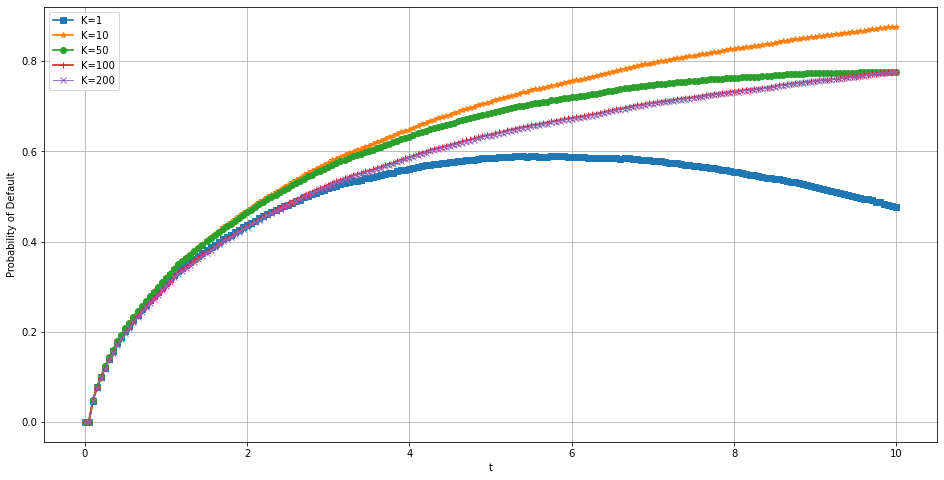}
\vspace{-5mm}
\caption{$\lambda=1$, $M=10000$, $N=200$, $k\in \{1,10, 50, 100, 200\}$}
\label{iter}
\end{figure}
Next, with different parameters $\lambda$, we see in Figures \ref{com1}  and \ref{com2}  that the mutual holding significantly decreases the propagation of defaults, and that a larger mean reversion level induces a more significant effect of mutual holding on the default probability. In other words a larger mean reversion level leads to an equilibrium equity process with lower systemic risk.  

\begin{figure}[!t]
\centering
\includegraphics[width=0.6\textwidth]{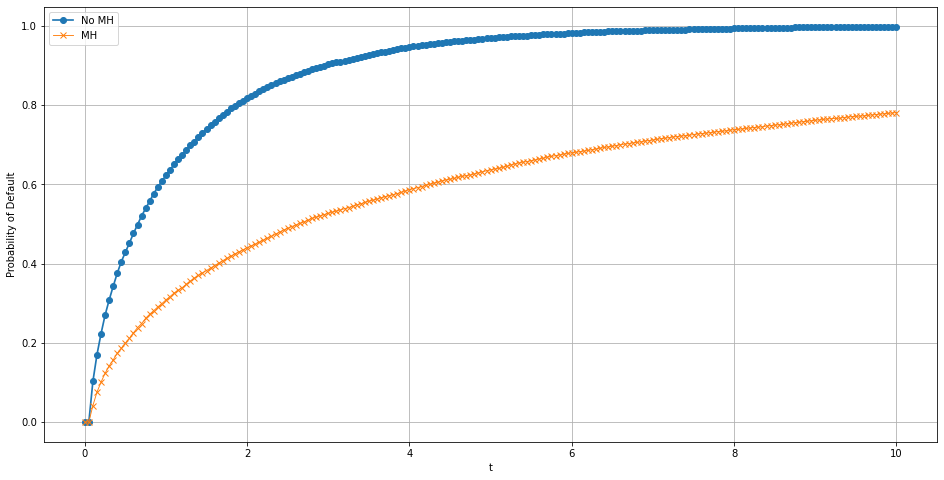}
\vspace{-5mm}
\caption{$\lambda=0.8$, $M=10000$, $N=200$, $k=200$}
\label{com1}
\end{figure}
\begin{figure}[!t]
\centering
\includegraphics[width=0.6\textwidth]{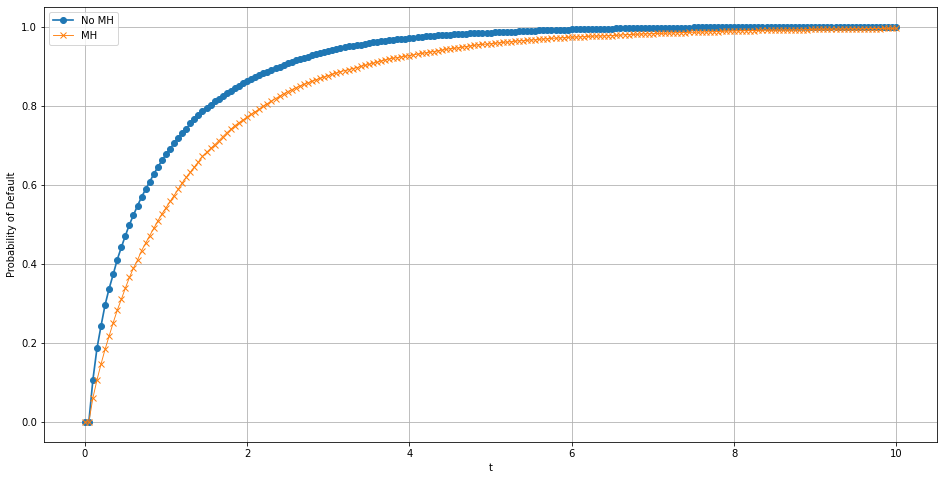}
\vspace{-5mm}
\caption{$\lambda=0.1$, $M=10000$, $N=200$, $k=200$}
\label{com2}
\end{figure}

\section{Particle system approximation} \label{sec:proof1}

This section is devoted to the proof of Theorems \ref{thm:main} and \ref{thm:approx_regularity}. Note that Theorem \ref{thm:main} (i) is an immediate consequence of Theorem  \ref{thm:approx_regularity} (iii). 

\subsection{Proof of Theorem  \ref{thm:approx_regularity}}
   
(i) Let us introduce the map $F:  \Pc\big(C([0,T])\big) \longrightarrow \Pc(\R_+)$ defined by
    \be*
        F(\nu)(\varphi)
        :=
        \int_{C([0,T])} \varphi\Big(\mathbf{y}_t H^n\big(\inf_{0\le s \le t} \mathbf{y}_s\big)\Big) \nu(\mathrm{d}\mathbf{y}),\quad \mbox{for all } \varphi\in C_b(\R_+). 
    \ee*
Using $F$, we can rewrite the dynamics of $Y^n$ as
\be*
        \mathrm{d}Y^n_t
        =
        \overline{B}^n\big(t,Y^n_t,\Lc(Y^n_{t \wedge \cdot})\big) \mathrm{d}t
        +
        \overline{\Sigma}^n\big(t,Y^n_t,\Lc(Y^n_{t \wedge \cdot})\big)\mathrm{d}W_t,
        ~\mbox{with}~
        (\overline{B}^n,\overline{\Sigma}^n)(t,x,\nu):=(B^n,\Sigma^n)(t,x,F(\nu)).
    \ee*
As the map  $C([0,T])\ni \mathbf{y} \mapsto \inf_{0\le s \le t} \mathbf{y}_s \in \R$ is Lipschitz, it follows that $F$ is $\Wc_1-$Lipschitz. Moreover, by Lemma \ref{lemma:existence_c}, the coefficients $\overline{B}^n,\overline{\Sigma}^n :[0,T] \x \R \x \Pc_q(C([0,T])) \to \R$ are Lipschitz in $(x,\nu)$ uniformly in $t\in [0,T]$. Then, the existence and uniqueness of $Y^n$ follow  from the path dependent extension of standard results, see e.g. \cite[Theorem A.3]{djete2019mckean} %e.g. \colorbox{yellow}{rajouter une référence}. 

\medskip
\vspace{1mm}  

\noindent (ii) Denote $\delta^n\!H:=H^n-H$. After possibly passing to a subsequence, we may assume without loss of generality that the bounded sequence $(\sup_{t \in [0,T]} \E\big[|\delta^n\!H(I^n_t)| \big])_{n \ge 1}$ has a limit. In order to prove the required result, we now show that this limit is zero.
Let $(t_{n})_{n \ge 1} \subset [0,T]$ be such that 
$$
    \sup_{t \in [0,T]} \E\big[|\delta^n\!H(I^{n}_t)| \big]\le \E\big[|\delta^n\!H(I^{n}_{t_n})| \big] + 2^{-n},
$$
and observe that for all $\varepsilon>0,$ we have 
    \begin{equation} \label{eq:conv_H}
       \E\big[\delta^n\!H(I^n_{t_n})| \big]
        =
        \E\Big[|\delta^n\!H(I^n_{t_n})|\1_{\{|I^n_{t_n}| \ge \varepsilon\}} \Big]
        \!+\!
        \E\Big[|\delta^n\!H(I^n_{t_n})|\1_{\{|I^n_{t_n}| < \varepsilon\}}  \Big] 
        % \nonumber \\ 
       \le %&
        \sup_{|x| \ge \varepsilon} |\delta^n\!H(x)|
        \!+\!
        2\P[|I^n_{t_n}| \le \varepsilon].
    \end{equation}
Since $(\rho^n=\Lc(Y^n_0))_{n \ge 1} \subset \Pc_q(\R_+)$ converges under $\Wc_q$ with $q>1$, the sequence $(\mu^{n})_{n \ge 1}=(\Lc(Y^{n}))_{n \ge 1}$ is relatively compact in $\Wc_1$. Then, after possibly passing to a subsequence $\mu^{n}\longrightarrow\mu^\infty$ in $\Wc_1$. Using \cite[Proposition 9.1]{touzidjete21}, we see that $\mu^\infty$ is the distribution of $Y^\infty$ that solves an SDE of non--degenerate diffusion coefficient. Up to a subsequence again, we can assume that $\lim_{n \to \infty} t_{n}=t_\infty$ for some $t_\infty \in [0,T]$.
Then, $\P \big[ I^\infty_{t_\infty}=a \big]=0$ for any $a \in \R$ where $I^\infty_t:= \inf_{0 \le s \le t} Y^\infty_s$. By the Portmanteau Theorem and the Lebesgue dominated convergence Theorem, we have
\begin{eqnarray*}
    \lim_{n \to \infty}\P[|I^{n}_{t_{n}}| \le \varepsilon]
     =
     \P[|I^{\infty}_{t_{\infty}}| \le \varepsilon]
     &\mbox{and}&
     \Lim_{\varepsilon \to 0} \P[|I^{\infty}_{t_{\infty}}| \le \varepsilon]
    =
    \P[|I^{\infty}_{t_{\infty}}| =0]=0.
\end{eqnarray*}
Then, for any limit $\mu^\infty=\Lc(Y^\infty)$ of the convergent (sub--)sequence $\mu^n=\Lc(Y^n)$, it follows from \eqref{eq:conv_H}, together with the convergence of $(H^n)_{n \ge 1}$, that $\Lim_{n \to \infty}\E\big[|\delta^{n}\!H(I^{n}_{t_{n}})| \big]=0$.

\medskip

\vspace{1mm}  

\noindent (iii) 
We recall that $X^n:=Y^n H(I^n)$, so that $X^n_t=Y^n_{t \wedge \tau^n}$, where $\tau^n:=\inf\{ t \le T,\;Y^n_t \le 0\}$. Using this SDE representation of $X^n$, we deduce that $(\Lc(X^n ))_{n \ge 1}$ is relatively compact in $\Wc_1$. Since $\lim_n\sup_{t \in [0,T]} \E\big[|\delta^{n}\!H^n(I^n_t)| \big]=0$, we deduce that
\begin{align*}
    \Lim_{n \to \infty}\Wc_1 \big( \big(\Lc(X^n_t )\big)_{t \in [0,T]}, \big(\Lc(\overline{Y}^n_t)\big)_{t \in [0,T]} \big)=0.%\;\;\mbox{and}\;\;\Lim_{n \to \infty}\Wc_1 \big( \Lc(X^n ), \Lc({Y}^\infty_{\cdot \wedge \tau^\infty}) \big)=0.
\end{align*}
Then, after possibly passing to a subsequence, we may assume that $\Lc(Y^n)=\mu^{n}\longrightarrow\mu^\infty=\Lc(Y^\infty)$ in $\Wc_1$. Denote $\mub^\infty_t:=\Lc\big(\overline Y^\infty_t \big)$ with $\overline Y^\infty_t:=Y^\infty_t H(I^\infty_t)$ and $I^\infty_t:=\inf_{s \le t} Y_s$ . For all Lipschitz function $f:\R^3 \to \R$,
    \begin{align*}
    \limsup_{n \to \infty} 
    |\E [f (Y^n_t,H^n(I^n_t),I^n_t )]
      &-\E [f (Y^\infty_t, H(I^\infty_t),I^\infty_t )]| 
   \\
       & \le
        \limsup_{n \to \infty} |\E [f (Y^n_t, H(I^n_t),I^n_t )]-\E[f (Y^\infty_t, H(I^\infty_t),I^\infty_t )]|=0.
    \end{align*}
%where the second convergence is due to the Portemanteau theorem and the fact that the only point of discontinuity of $H$ is $0$.
Therefore, 
    $$
        \lim_{n \to \infty} \Lc\big( Y^n_t, H^n(I^n_t),I^n_t \big)
        =
        \Lc\big( Y^\infty_t, H(I^\infty_t),I^\infty_t \big) \mbox{ in } \Wc_1.
    $$
In particular,  $\mub^n_t=\Lc(Y^n_t H^n(I^n_t))\longrightarrow\Lc(Y^\infty_t H(I^\infty_t))=\mub^\infty_t$ in $\Wc_1$ as $n\to\infty$.

\medskip

\vspace{1mm}  

\noindent (iv) We next prove the convergence of $c^{n}_0(t)=\mub^n_t(H^n)$. For $\varepsilon>0$, one has
    \begin{align*}
        | \mub^n_t(H^n) - \mub^n_t(H)|
        &\le \E\Big[|H^n(\overline Y^n_t)-H(\overline Y^n_t)|\1_{\{|\overline Y^n_t| \ge \varepsilon \}} \Big]
        +
        \E\Big[|H^n(\overline Y^n_t)-H(\overline  Y^n_t)|\1_{\{|\overline Y^n_t| \le \varepsilon \}}\Big]
        \\
        &\le 
        \sup_{|x| \ge \varepsilon} |H^n(x)-H(x)| +
        \E\Big[|H^n(\overline Y^n_t)-H(\overline  Y^n_t)|\1_{\{\overline Y^n_t \le \varepsilon \}} \1_{\{I^n_t>0 \}}\Big]
        \\
        &\le 
        \sup_{|x| \ge \varepsilon} |H^n(x)-H(x)| +
        \P \big[Y^n_tH^n(I^n_t) \le \varepsilon,I^n_t>0 \big]     \underset{n\to\infty}\longrightarrow
        \P \big[Y^\infty_tH(I^\infty_t) \le \varepsilon, I^\infty_t>0 \big]
    \end{align*}
 where the last convergence follows from the Portmanteau Theorem and $\P[Y^\infty_t=\varepsilon] + \P[I_t^\infty=0]=0$. Applying the dominated convergence theorem, one has $\lim_{\eps\to 0}\P \big[Y^\infty_tH(I^\infty_t) \le \varepsilon, I^\infty_t>0 \big]\le \P \big[I^\infty_t \le0, I^\infty_t>0 \big]=0$. Using again the Portmanteau Theorem  and $\P[  Y^\infty_t=0]+\P[I_t^\infty=0]=0$, it follows that
    \be*
        \lim_{n \to \infty}\mub^n_t(H)
        =
        \lim_{n \to \infty}\P[\overline Y^n_t>0]
        =
        \lim_{n \to \infty}\P[ Y^n_t>0, I^n_t >0]
        =
        \P[ Y^\infty_t>0, I^\infty_t >0]
        =
        \P[\overline Y^\infty_t>0]
        =
        \mub^\infty_t(H).
    \ee*
Finally, combining the obtained convergence results, we find
    \be*
        \lim_{n \to \infty}| \mub^n_t(H^n) - \mub^\infty_t(H)  |
        \le
        \lim_{n \to \infty} | \mub^n_t(H^n) - \mub^n_t(H)  |
        +
        \lim_{n \to \infty} | \mub^n_t(H) - \mub^\infty_t(H)  |
        =
        0.
    \ee* 
(v) To prove the convergence of $c^{n}_1(t,\mub^n_t)$, we compute 
    \be*
    &&\big|c^{n}_1(t,\mub^n_{t}) - c_{1}(t,\mub^\infty_t) \big|\\
    &=&
    \bigg|  \int_{\R_+} \frac{(c^{n}_1(t,\mub^n_{t}) + b^n(t,x,\mub^n_{t}))^{+} H^n(x)}
                                       {1+\mub^n_t(H^n)}\mub^n_{t}(\d x)
    - \int_{\R_+} \frac{(c_1(t,\mub^\infty_{t}) + b(t,x,\mub^\infty_{t}))^{+} H(x) }
                               {1+\mub^\infty_t(H)}\mub^\infty_{t}(\d x)
    \bigg|
    \\
    &\le& \big|c^{n}_1(t,\mub^n_{t}) - c_1(t,\mub^\infty_t) \big| \frac{\mub^n_t(H^n)}{1+\mub^n_t(H^n)}
    + \alpha^n + \beta^n +\gamma^n,
\ee*
where
\be*   
\alpha^n &:=& \int_{\R_+} \big | b^n(t,x,\mub^n_t) - b(t,x,\mub^\infty_t)\big| H^n(x)\mub^n_{t}(\d x) \\
\beta^n &:=&\left |\int_{\R_+} \big( c_1(t,\mub^\infty_{t}) + b(t,x,\mub^\infty_{t}) \big)^{+} \big(H^n(x)\mub^n_{t}(\d x)-H^n(x)\mub^\infty_{t}(\d x)\big) \right |\\
\gamma^n &:=&\big|\mub^n_t(H^n)- \mub^\infty_t(H)\big|\int_{\R_+} \big( c_1(t,\mub^\infty_{t}) + b(t,x,\mub^\infty_{t}) \big)^{+} H(x)\mub^\infty_{t}(\d x). 
\ee*
By convergence assumption of $b^n$, $\lim_{n \to \infty} \alpha^n =  0$ holds. Moreover $\lim_{n \to \infty} \gamma^n =  0$ by the previous step. Using similar arguments as those developed for the convergence $\lim_{n \to \infty}\big|\mub^n_t(H^n)- \mub^\infty_t(H)\big|=0$, we obtain
\be*
\beta^n =\left| \E\big[\big(c_1(t,\mub^\infty_{t}) + b(t,\overline Y^n_t,\mub^\infty_{t})\big)^{+}H^n(\overline Y^n_t)-H^n(\overline Y^\infty_t)\big]\right|\underset{n \to \infty} {\longrightarrow} 0.
\ee*
Hence,
$\lim_{n \to \infty} \big|c^{n}_1(t,\mub^n_{t}) - c_1(t,\mub^\infty_t) \big|
    \le    2\lim_{n \to \infty} (\alpha^n+\beta^n+\gamma^n)
    = 0$.

\medskip

\vspace{1mm}
    
\noindent (vi) We now have all the ingredients for the convergence of $(B^n,\Sigma^n)$. Passing to the limit in the equation satisfied by $B^n$, it follows from the convergence of $b^n$, $c^{n}_0$ and $c^{n}_1$ that $\lim_{n \to \infty}B^n(t,x,\mub^n_t)=B(t,x,\mub^\infty_t)$ for all $x \in \R$. Let $G\in C_b(\R^2)$ be Lipschitz. Notice that
    \begin{align*}
        &\E\big[\big|G\big( Y^\infty_t,  H^n(B^n(t,Y^\infty_t,\mub^n_t))  \big) - G\big( Y^\infty_t,  H(B^n(t,Y^\infty_t,\mub^n_t))  \big) \big| \big]
        \\
        &\le 
        \E\big[\big|G\big( Y^\infty_t,  H^n(B^n(t,Y^\infty_t,\mub^n_t))  \big) - G\big( Y^\infty_t, H(B^n(t,Y^\infty_t,\mub^n_t))  \big) \big|\1_{\{|B^n(t,Y^\infty_t,\mub^n_t)| \ge \varepsilon\}} \big] 
        \\
        &\hspace{5mm}+
        C\P [|B^n(t,Y^\infty_t,\mub^n_t)| \le \varepsilon]
        \\
        &\le
        C \sup_{|x| \ge \varepsilon} |H^n(x)- H(x)| +
        C\P [|B^n(t,Y^\infty_t,\mub^n_t)| \le \varepsilon].
    \end{align*}
As $\mu^\infty$ is the distribution of $Y^\infty$ that solves an SDE of non--degenerate diffusion coefficient, it follows that $\mu^\infty_t(\mathrm{d}y)\mathrm{d}t$ admits a density w.r.t. the Lebesgue measure over $\R \x [0,T]$. Therefore, thanks to the assumptions satisfied by $b$ in \Cref{assum:bsigma}, we can check that $\int_0^T\P [|B(t,Y^\infty_t,\mub^\infty_t)| = \varepsilon] \mathrm{d}t=\int_0^T\int_{\R} \1_{\{|B(t,y,\mub^\infty_t)| = \varepsilon \}} \mu^\infty_t(\mathrm{d}y) \mathrm{d}t=0$. 
   Since $\lim_{n \to \infty}B^n(t,x,\mub^n_t)=B(t,x,\mub^\infty_t)$ for all $x\in \R$, we can deduce by the Portemanteau theorem that
    \be*
        \lim_{n \to \infty}\P [|B^n(t,Y^\infty_t,\mub^n_t)| \le \varepsilon]
        =
        \P [|B(t,Y^\infty_t,\mub^\infty_t)| \le \varepsilon],\;\;\mbox{for a.e. } t \in [0,T].
    \ee*
    With similar techniques to those used previously, we get by taking $n \to \infty$ and $\varepsilon \to 0$, for a.e. $t \in [0,T]$,
    \be*
        \lim_{n \to \infty} \E\big[\big|G\big( Y^\infty_t, H^n(B^n(t,Y^\infty_t,\mub^n_t))  \big) - G\big( Y^\infty_t, H(B^n(t,Y^\infty_t,\mub^n_t))  \big) \big| \big]\le  \lim_{\eps \to 0}\P [|B(t,Y^\infty_t,\mub^\infty_t)| \le \varepsilon] = 0,
    \ee*
    where the last equality is ensured by Assumption \ref{assum:bsigma} (ii). Again by the Portemanteau theorem, one has for a.e. $t \in [0,T]$, 
\be*    
    \lim_{n \to \infty} \E\big[G\big( Y^\infty_t,  H(B^n(t,Y^\infty_t,\mub^n_t))  \big) \big| \big]=\E\big[G\big( Y^\infty_t,  H(B(t,Y^\infty_t,\mub^\infty_t))  \big) \big| \big],
    \ee* 
which implies that for a.e. $t \in [0,T]$
    \be*
        \lim_{n \to \infty} \Lc\big( Y^\infty_t,  H^n(B^n(t,Y^\infty_t,\mub^n_t))  \big)
        =
        \Lc\big( Y^\infty_t, H(B(t,Y^\infty_t,\mub^\infty_t))  \big)
        ~~\mbox{in}~~\Wc_1.
    \ee*
    As $\lim_{n \to \infty}\mub^n_t(H^n)=\mub^\infty_t(H)$ and $\lim_{n \to \infty} \mub^n_t=\mub^\infty_t$, using the previous result, we may deduce that 
    \begin{align*}
        &\E \big[|\Sigma^n(t,Y^\infty_t,\mub^n_t)
                    -\Sigma(t,Y^\infty_t,\mub^\infty_t)| \big] 
\\ &=
        \E \left[\left|\frac{\sigma^n(t,Y^\infty_t,\mub^n_t)}{1+ \mub^n_t(H^n)  H^n(B^n(t,Y^\infty_t,\mub^n_t))}-\frac{\sigma(t,Y^\infty_t,\mub^\infty_t)}{1+ \mub^\infty_t(H)  H(B(t,Y^\infty_t,\mub^\infty_t))}\right| \right]
  \;\longrightarrow\; 0,
  \mbox{ for a.e. }t \in [0,T].
    \end{align*}
   On the other hand, since $\mu^\infty_t(\mathrm{d}x)\mathrm{d}t$ has a density with respect to the  Lebesgue measure on $[0,T] \x \R$, then it must hold that $\lim_{n \to \infty} \Sigma^n(t,x,\mub^n_t)=\Sigma(t,x,\mub^\infty_t)$ a.e. $(t,x) \in [0,T] \x \R$.     

\medskip
    
    \vspace{1mm}
    
\noindent (vii) Given the dynamics of $Y^n$ in the statement of \Cref{thm:approx_regularity}, it follows from the convergence results established in the previous steps that  
    \be*
        Y^\infty_t
        =Y^\infty_0+
        \int_0^t B(s,Y^\infty_s,\mub^\infty_s) \mathrm{d}s
        +
        \int_0^t\Sigma(s,Y^\infty_s,\mub^\infty_s) \mathrm{d}W_s,\quad \forall t\in [0,T].
    \ee*
    Therefore, by noticing $Y^\infty_tH(I^\infty_t)=Y^\infty_{t \wedge \tau^\infty}$ and
   $\mub^\infty_t(H)
        = \P [Y^\infty_tH(I^\infty_t) > 0]
        =        \P[Y^\infty_{t \wedge \tau^\infty}>0]$, 
    where $\tau^\infty:=\inf\{t \in [0,T]: Y^\infty_t \le 0\}$, we have showed that $(c^n_0,c^n_1)(t) \underset{n \to \infty} {\longrightarrow} (\mub^\infty_t(H),c_1(t,\mub^\infty_t))$. It is straightforward that $Y^\infty_{ \wedge \tau^\infty}$ satisfies \eqref{eq:MFG-equilibriumSDE}. It remains to prove that 
    \begin{align*}
        \Lim_{n \to \infty}\Wc_1 \big( \Lc(X^n_\cdot ), \Lc({Y}^\infty_{\cdot \wedge \tau^\infty}) \big)=0.
    \end{align*}
To see this, let $0 \le r_1 \le \cdots \le r_L$ and $0 \le t_1 \le \cdots \le t_L \le T$, and observe that 
    $
        \P \big[ \cap_{i=1}^L \{ X^n_{t_i} \ge r_i \} \big]
        =
        \P \big[ \cap_{i=1}^L \{ Y^n_{t_i} \ge r_i, I^n_{t_i} >0 \} \big]
        \underset{n\to\infty}{\longrightarrow}
        \P \big[ \cap_{i=1}^L \{ Y^\infty_{t_i} \ge r_i, I^\infty_{t_i} >0 \} \big]
        =
        \P \big[ \cap_{i=1}^L \{ Y^\infty_{t_i \wedge \tau^\infty} \ge r_i \} \big],
    $
by Portemanteau Theorem, since $\sum_{i=1}^L \P[Y^\infty_{t_i}=r_i]=0$. 
As $(\Lc(X^n))_{n \ge 1}$ converges (along the subsequence), this allows to identify its limit to $\Lc({Y}^\infty_{\cdot \wedge \tau^\infty})$.
    \ep

%{\color{red}
%\begin{remark}\label{rem:existence}
%Note that, if we replace the parameters {\color{red}$H, \rho, b, \sigma$ by $H^n, \rho^n, b^n, \sigma^n$,}  we deduce from the above proof the existence of $Y$ satisfying $\Lc(Y_0)=\rho^n$ and 
%\be*
   %dY_t = B^n(t,X_t,\mu_t)\d t +\Sigma^n(t,X_t,\mu_t)\d W_t,\quad \mbox{with } \mu_t=\Lc(Y_{t\wedge\tau})  \mbox{ and } \tau:=\inf\{t \in [0,T]: Y_t \le 0\}.
%\ee*
%In particular, the stopped process $(Y_{t\wedge\tau})_{0\le t\le T}$ solves the mean field SDE
%\be*
   %dX_t = \1_{\{X_t>0 \}} \Big[ B^n(t,X_t,\mu_t)\d t +\Sigma^n(t,X_t,\mu_t)\d W_t\Big],\quad \mbox{with } \mu_t=\Lc(X_t)  \mbox{ and } \Lc(X_0)=\rho^n.
%\ee*
%\end{remark}
%}

\subsection{Derivation of the autonomous characterization of the default probability}
\label{sec:autonomous}

This section is devoted to the proof of Theorem \ref{thm:more}. Unless otherwise specified, we use throughout the approximation arising in Theorem \ref{thm:approx_regularity}, see also \Cref{rem:choice} (ii). Notice that, the process $X^n:=Y^n H\big(I^n\big)$, with $I^n$ the running minimum of the process $Y^n$ satisfies
\be*
   \mathrm{d}X^n_t = \1_{\{X^n_t>0 \}} \big[ B^n(t,X^n_t,\mub^n_t)\d t +\Sigma^n(t,X^n_t,\mub^n_t)\d W_t\big],\quad \mbox{with } \mub^n_t=\Lc(Y^n_t H^n(I^n_t))  \mbox{ and } \Lc(X_0)=\rho^n,
\ee*
and recall the notations $\mu^n_t:=\Lc(X^n_t)$ and $c^n_0(t):=\mu^n_t[\R_+^*]$. 

\begin{lemma}\label{lem:decom}
Let the conditions of Theorem \ref{thm:more} hold. 

\vspace{1mm}

\noindent {\rm (i)} The map $(t,x)\mapsto (B^n,\Sigma^n)(t,x,\mub^n_t)$ is H\"older continuous in $t$ and Lipschitz in $x$.

\vspace{1mm}

\noindent {\rm (ii)} For each $t>0$, there exists a sub--probability density $p^n(t,\cdot)$ supported on $(0,\infty)$ such that 
\be*
\mu^n_t(\d x) = \big(1-c^n_0(t)\big)\delta_0(\d x) + p^n(t,x)\d x &\mbox{and}& c^n_0(t) = \int_0^{\infty}p^n(t,x)\d x. 
\ee*
\end{lemma}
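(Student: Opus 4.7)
The overall plan is to establish part (i) first, since the regularity of the frozen coefficients $(B^n,\Sigma^n)(\cdot,\cdot,\mub^n_\cdot)$ is precisely what allows the classical parabolic machinery for killed diffusions to apply in part (ii). The arguments are relatively standard once the right chain of implications is set up, and the only genuinely technical point is the Hölder-in-time regularity of the curve $t\mapsto\mub^n_t$.

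For (i), the central step is to show that $t\mapsto\mub^n_t$ is H\"older-$\tfrac12$ in $\Wc_1$. By \Cref{lemma:existence_c} together with the smoothness and boundedness of $H^n$, the coefficients $(B^n,\Sigma^n)$ are Lipschitz in $(x,m)$ uniformly in $t$ and $\Sigma^n$ is bounded above and below away from zero, so standard SDE a priori estimates give $\E\big[|Y^n_t-Y^n_s|^2\big]\le C|t-s|$ for $s\le t$. The running infimum satisfies $0\le I^n_s-I^n_t\le\sup_{u\in[s,t]}(Y^n_s-Y^n_u)$, hence inherits the same rate in $L^2$, and since $H^n$ is smooth with bounded derivative, the product $\overline{Y}^n=Y^nH^n(I^n)$ is H\"older-$\tfrac12$ in $L^1$, whence $\Wc_1(\mub^n_t,\mub^n_s)\le C|t-s|^{1/2}$. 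Combining this with the Lipschitz-in-$m$ properties of $b^n,\sigma^n$ and $c^n_1(t,\cdot)$ from \Cref{lemma:existence_c}, and with the $C^\infty$ regularity in $(t,x)$ of $b^n,\sigma^n,H^n$, the map $(t,x)\mapsto(B^n,\Sigma^n)(t,x,\mub^n_t)$ is then H\"older-$\tfrac12$ in $t$ and Lipschitz in $x$; for $\Sigma^n$, the composition $H^n\!\circ\! B^n$ appearing in the denominator is Lipschitz in $x$ since $B^n$ is Lipschitz and $H^n$ is smooth with bounded derivative, while the denominator stays bounded below by $1$.

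For (ii), with the regularity from (i) in hand, the process $Y^n$, viewed with $\mub^n_\cdot$ frozen, is governed by an SDE with H\"older-in-$t$, Lipschitz-in-$x$ drift and a uniformly elliptic bounded diffusion coefficient. Classical parabolic theory, via the parametrix construction of the Dirichlet fundamental solution of the associated operator on $(0,\infty)$ with absorbing boundary at $0$, furnishes a jointly continuous sub--probability density $p^n(t,\cdot)$ on $(0,\infty)$ such that $\P[Y^n_t\in \d x,\,\tau^n>t]=p^n(t,x)\,\d x$, where $\tau^n:=\inf\{u\ge 0:Y^n_u\le 0\}$. Since $\rho^n$ is supported in $\R_+^*$, the event $\{I^n_t>0\}$ coincides with $\{\tau^n>t\}$, so $X^n_t=Y^n_t\1_{\{\tau^n>t\}}$, and for any Borel $A\subset\R_+$,
\begin{equation*}
\mu^n_t(A)=\int_{A\cap(0,\infty)} p^n(t,x)\,\d x+\1_{\{0\in A\}}\P[\tau^n\le t],
\end{equation*}
giving the claimed decomposition with $\int_0^\infty p^n(t,x)\,\d x=\P[\tau^n>t]=c^n_0(t)$. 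The main obstacle lies in this last step: although the coefficients $B^n,\Sigma^n$ arise from the nonlinear fixed-point equation for $c^n_1$ and involve the discontinuous-in-$B^n$ factor inside $\Sigma^n$, part (i) is precisely what delivers the H\"older-$\tfrac12$-in-$t$ and Lipschitz-in-$x$ regularity required to invoke the Dirichlet parametrix construction.
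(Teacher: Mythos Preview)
Your proof is correct, and for part (i) it coincides with the paper's argument: both establish $\Wc_1(\mub^n_t,\mub^n_s)\le C\sqrt{|t-s|}$ from SDE moment estimates and then combine this with the Lipschitz-in-$(x,m)$ regularity of $B^n,\Sigma^n$ coming from \Cref{lemma:existence_c}. You simply spell out the Hölder estimate for $\mub^n_\cdot$ in more detail than the paper, which just asserts it.

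For part (ii), however, you take a genuinely different route. The paper does not invoke the Dirichlet parametrix at this stage. Instead it argues more elementarily: by Kusuoka's result the \emph{unkilled} process $Y^n_t$ has a Lebesgue density on all of $\R$, and since for Borel $A\subset(0,\infty)$ one has $\P[X^n_t\in A]=\P[Y^n_t\in A,\tau>t]\le\P[Y^n_t\in A]$, the restriction of $\mu^n_t$ to $(0,\infty)$ is dominated by an absolutely continuous measure and hence itself absolutely continuous. This comparison argument gives exactly the decomposition claimed, with minimal machinery. Your approach via the Dirichlet fundamental solution is heavier but delivers more---a jointly continuous density with quantitative bounds---which is in fact what the paper needs in the \emph{next} subsection (where it does invoke Garroni--Menaldi for the half-space Fokker--Planck equation). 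So your argument effectively merges the lemma with part of the subsequent analysis, while the paper keeps the bare decomposition statement separate and lightweight.
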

\begin{proof}
(i) By assumption, $B^n$ and $\Sigma^n$ have linear growth in $x$. Therefore, there exists $C>0$ such that
\be*
\max_{0\le t\le T} \int_{\R_+}x\mu^n_t(\d x)=\max_{0\le t\le T}\E\big[|X^n_t|\big]\le \E\big[\max_{0\le t\le T}|X^n_t|^q\big]\le C.
\ee*
It is clear that $B^n$ and $\Sigma^n$ inherit the Lipschitz property of $b^n$, $\sigma^n$ and $H^n$ in $x$. Moreover, it follows from Lemma \ref{lemma:existence_c} that there exist $\delta\in (0,1)$ and $C>0$, such that
\be* 
|B^n(t,x,\mub^n_t)-B^n(s,x,\mub^n_s)|+|\Sigma^n(t,x,\mub^n_t)-\Sigma^n(s,x,\mub^n_s)|\le C\big[|t-s|^\delta + \Wc_1(\mub^n_t,\mub^n_s)\big]. 
\ee*
As $\Wc_1(\mub^n_t,\mub^n_s) \le C'\sqrt{|t-s|}$, for some constant $C'>0$, this provides the required result.

\vspace{1mm}

(ii) Define $\tau:=\inf\{t\ge 0: Y^n_t\le 0\}$ and notice that $X^n_t=Y^n_t$ on the event $\{\tau>t\}$, where
\be*
Y^n_t=X^n_0 + \int_0^t B^n(s,Y^n_s,\mub^n_s)\d s + 
\int_0^t \Sigma^n(s,Y^n_s,\mub^n_s) \d W_s,\quad \forall t\ge 0,
\ee*
By Kusuoka \cite{KUSUOKA2017359}, the distribution of $Y_t$ is absolutely continuous with respect to the Lebesgue measure for all $t>0$. As $\P[X^n_t\in A]=\P[Y^n_t\in A,\tau>t] \le \P[Y^n_t \in A]$ for all Borel subset $A\subseteq(0,\infty)$, we deduce that $X^n_t$ inherits the absolute continuity of $Y^n_t$ with respect to the Lebesgue measure. 
\end{proof}

\subsubsection{The case of smooth coefficients}\label{ssec:smooth}

We first establish the desired equality \eqref{eq:c} for the approximating process $X^n$. Notice that $\mub^n_t$ is fixed and we write in Section \ref{ssec:smooth} $B^n(t,x)\equiv B^n(t,x,\mub^n_t)$ and $\Sigma^n(t,x)\equiv \Sigma^n(t,x,\mub^n_t)$ for the sake of simplicity. By construction, there exist $C>0$ and $\gamma:=\delta\wedge 1/2$ such that  
\begin{align*}
& |B^n(t,x)|+ |\Sigma^n(t,x)|+|\partial_x  B^n(t,x)| + |\partial_x\Sigma^n(t,x)| +  |\partial^2_{xx}\Sigma^n(t,x)| \le C, 
\\
 & | B^n(t,x)- B^n(t',x')|+ |\Sigma^n(t,x)-\Sigma^n(t',x')| \le C|t-t'|^{\gamma} + C|x-x'|.
\end{align*}
Notice that $\rho^n(dx)=p_0^n(x)dx$ with a smooth density $p_0^n$. By Garroni and Menaldi \cite[Chapter VI, Lemma 1.10 \& Theorem 2.2]{1992green} and \cite[Theorem 2.2.]{1992green}, the Fokker--Planck equation on the half space
\begin{equation}\label{FPn}
\begin{array}{l}
\partial_t p(t,x) =\frac{1}{2}\partial^2_{xx}\big(\Sigma^n(t,x)^2p(t,x)\big)  - \partial_x\big( B^n(t,x)p(t,x)\big) ,\quad  t, x>0, \\
p(0,x)=p_0^n(x),\quad p(t,0)=0 ,\quad t, x>0,
\end{array}
\end{equation}
has an unique classical solution $p^n$ satisfying 
\beq\label{eq:regularity}
|\partial_t p^n(t,x)|+|\partial_x p^n(t,x)|+|\partial^2_{xx} p^n(t,x)| 
\le 
\frac{C}{t}\int_0^\infty e^{-\frac{(x-y)^2}{Ct}} p_0^n(y)\d y,~~\mbox{for all } t,x>0.
\eeq
Further, it follows from Figalli \cite[Lemma 2.3]{FIGALLI2008109} that the marginal distribution $\mu^n_t=\Lc(X^n_t)$ has the following decomposition
\beq\label{munt}
\mu^n_t(\d x) 
= 
\big(1-c^{n}_0(t)\big) \delta_0(\d x) 
+ 
p^n(t,x)\d x &\mbox{and}& c^{n}_0(t)=\int_{0}^\infty p^n(t,x)\d x,
~~\mbox{for all }t\ge 0.
\eeq
Similarly, the corresponding non--absorbed SDE
\begin{equation}\label{SDERn}
R^n_s = x + \int_t^s B^n(u,R^n_u)\d u + \int_t^s \Sigma^n(u,R^n_u)\d W_u,\quad \forall s\ge t
\end{equation}
has a unique solution $R^{n,t,x}$ with marginal distributions $\Lc(R^{n,t,x}_s)$ absolutely continuous with respect to the Lebesgue measure, i.e. $\Lc(R^{n,t,x}_s)=g^n(t,x,s,y)\d y$, where the density function $g^n$ is the unique solution of the backward Kolmogorov equation parameterized by $s,y>0$:
\begin{equation}\label{FPn}
\begin{array}{l}
\partial_t g^n(t,x,s,y) 
=
- \frac{1}{2}\Sigma^n(t,x)^2\partial^2_{xx}g^n(t,x,s,y)
- B^n(t,x)\partial_xg^n(t,x,s,y), ~\mbox{for all}~ t\in [0,s),~x\in\R, \\
g^n(s,x,s,y)=\delta_{y},~\mbox{for all}~x\in\R.
\end{array}
\end{equation}
\begin{proposition}\label{prop:uniqueness}
The density function $p^n$ introduced in \eqref{munt} satisfies 
\begin{eqnarray}\label{pn}
    p^n(s,y)  = \int_0^\infty p^n_0(x) g^n(0,x,s,y) \d x +  \int_0^s g^n(t,0,s,y)\dot{c}^{n}_0(t)  \d t, \quad \mbox{for all }
    s \ge 0, y>0.
\end{eqnarray}
Moreover, denoting $c^n:=(c^n_0,c^n_1)$ and $\overline{B}^n(t,r):=\1_{\{r>0\}}\big(1,B^n(t,r)^+\big)$, we have 
\begin{eqnarray}\label{eq:cn}
    c^{n}(s)  
    &=&
    \E\left[\overline{B}^{n}\left(s,R^{n, 0,X^n_0}_s\right)\right]
- \int_{c^{n}_0(s)}^1\E\left[\overline{B}^{n}\left(s,R^{n, (c_{0}^{n})^{-1}(u),0}_s\right)\right]\d u,\quad  \forall s\in [0,T].
\end{eqnarray}
where $(c_{0}^{n})^{-1}\!:=\!0\!\vee\!\sup\{t\ge 0:c^n_{0}(t)\!\le\! u\}$ is the left-continuous inverse of the non-increasing function $c^{n}_0$. 
\end{proposition}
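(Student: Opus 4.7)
The argument splits naturally into two parts: establishing the representation \eqref{pn} for the sub-probability density $p^n$ via a PDE duality argument, and then deriving the vector identity \eqref{eq:cn} from \eqref{pn} by integrating against $\overline B^n$ followed by a time change.

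For \eqref{pn}, I would fix $s\in(0,T]$ and $y>0$ and set $u(t,x):=g^n(t,x,s,y)$, which solves the backward Kolmogorov PDE on $[0,s)\times\R$ with terminal datum concentrated at $y$. The plan is to compute the $t$-derivative of $\Phi(t):=\int_0^\infty p^n(t,x)\,u(t,x)\,\d x$ by substituting the forward PDE for $p^n$ and the backward PDE for $u$, then integrating by parts twice in $x$ over $(0,\infty)$. The interior second- and first-order contributions cancel by formal adjointness of the operators; the boundary terms at $x=+\infty$ vanish by \eqref{eq:regularity} together with the corresponding Parametrix-type Gaussian bound on $g^n$ and $\partial_x g^n$; and at $x=0$ almost everything is killed by the absorbing condition $p^n(t,0)=0$, leaving only
\be*
\Phi'(t) = -\tfrac{1}{2}\Sigma^n(t,0)^2\,\partial_x p^n(t,0)\,g^n(t,0,s,y).
\ee*
A parallel one-integration-by-parts calculation for $c^n_0(t)=\int_0^\infty p^n(t,x)\,\d x$, using the forward PDE and $p^n(t,0)=0$, gives $\dot c^n_0(t)=-\tfrac{1}{2}\Sigma^n(t,0)^2\,\partial_x p^n(t,0)$, so that $\Phi'(t)=\dot c^n_0(t)\,g^n(t,0,s,y)$. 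Integrating over $[0,s)$ and letting $t\uparrow s$, the terminal condition identifies $\Phi(s^-)$ with $p^n(s,y)$ by continuity of $p^n(s,\cdot)$ at $y$ and weak convergence $g^n(t,\cdot,s,y)\Rightarrow\delta_y$ as $t\uparrow s$; this yields exactly \eqref{pn}.

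For \eqref{eq:cn}, I would multiply \eqref{pn} by the vector-valued test function $\overline B^n(s,y)=\1_{\{y>0\}}\bigl(1,B^n(s,y)^+\bigr)$ and integrate in $y$ over $(0,\infty)$. The first component of the left-hand side is $\int_0^\infty p^n(s,y)\,\d y = c^n_0(s)$; the second is $\int_0^\infty B^n(s,y)^+p^n(s,y)\,\d y$, which provides the working identification of $c^n_1(s)$. By Fubini and the interpretation of $g^n(t,x,s,\cdot)$ as the density of $R^{n,t,x}_s$ on $\R$, for each $\phi\in\{1,B^n(s,\cdot)^+\}$ one has $\int_0^\infty g^n(t,x,s,y)\,\phi(y)\,\d y=\E\bigl[\phi(R^{n,t,x}_s)\1_{\{R^{n,t,x}_s>0\}}\bigr]$, so that the right-hand side of the integrated formula becomes
\be*
\E\bigl[\overline B^n(s,R^{n,0,X^n_0}_s)\bigr] + \int_0^s \dot c^n_0(t)\,\E\bigl[\overline B^n(s,R^{n,t,0}_s)\bigr]\,\d t.
\ee*
The substitution $u=c^n_0(t)$, $\d u=\dot c^n_0(t)\,\d t\le 0$, with $t=(c^n_0)^{-1}(u)$, maps $t:0\to s$ onto $u:1\to c^n_0(s)$ and converts the time integral into $-\int_{c^n_0(s)}^1\E[\overline B^n(s,R^{n,(c^n_0)^{-1}(u),0}_s)]\,\d u$, producing \eqref{eq:cn}. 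Any plateaux of $c^n_0$ contribute nothing on the left (since $\dot c^n_0=0$ there) and correspond to a Lebesgue-null set of $u$-values on the right, so the formula is unambiguous.

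The main obstacle is Step~1: justifying differentiation under the integral defining $\Phi$, performing two successive integrations by parts on a half-line, and controlling the boundary terms at $+\infty$. The bound \eqref{eq:regularity} gives Gaussian control on $p^n$ and $\partial_x p^n$; an analogous Parametrix estimate on $g^n$ and $\partial_x g^n$, available under the H\"older-in-$t$, Lipschitz-in-$x$ regularity established in Lemma~\ref{lem:decom}(i), handles the dual side. The singular terminal condition $u(s,\cdot)=\delta_y$ is avoided by working on $[0,s-\varepsilon]$ and passing $\varepsilon\downarrow 0$ at the end, using continuity of $p^n(s,\cdot)$. Once these analytic points are secured, the second half of the proof is routine Fubini and change-of-variable bookkeeping.
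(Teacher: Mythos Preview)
Your proposal is correct and follows essentially the same route as the paper. The paper packages the duality computation as the single divergence identity $\partial_t(p^ng^n)+\partial_x(B^np^ng^n)-\partial_x\{\partial_x(A^np^n)g^n-A^np^n\partial_xg^n\}=0$ and integrates it over $(0,s)\times(0,\infty)$ in one stroke, whereas you differentiate $\Phi(t)=\int_0^\infty p^n(t,x)g^n(t,x,s,y)\,\d x$ and then integrate in $t$; these are two presentations of the same integration-by-parts argument, with the same boundary identification $\dot c^n_0(t)=-\tfrac12\Sigma^n(t,0)^2\partial_xp^n(t,0)$ and the same Fubini/change-of-variable derivation of \eqref{eq:cn}.
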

\begin{proof}
Denote $A^n:=\frac12(\Sigma^n)^2$. Integrating the Fokker--Planck equation on $(0,\infty)$, it follows from the estimate \eqref{eq:regularity} that 
\be*
\int_0^\infty \!\!\partial_t p^n(t,x)\d x 
= 
\int_0^\infty\!\!\partial^2_{xx}\big(A^n p^n\big)(t,x)\d x 
- \int_0^\infty\!\! \partial_x(B^np^n)(t,x)\d x
=
\big[\partial_{x}\big(A^n p\big)(t,.)\big]_0^\infty - \big[ (B^np^n)(t,.)\big]_0^\infty.
\ee*
Using again \eqref{eq:regularity}, we have for all $t>0$ that $({B^n} p^n)(t,\infty)=0=\partial_{x}\big(A^n p^n\big)(t,\infty)$. As $p^n(t,0)=0$, we see by Fubini's theorem that
\begin{eqnarray}\label{c0-FP}
\dot{c}^{n}_0(t) 
=-\partial_{x}(A^np^n)(t,0),\quad \mbox{for all } t>0.
\end{eqnarray}
Observe next that $\partial_t (p^ng^n) + \partial_x \big(B^np^ng^n\big) - \partial_x \big\{\partial_{x} (A^np^n)g^n - A^np^n\partial_x g^n\big\}= 0.$ Integrating both sides over $(0,s)\times (0,\infty)$, we obtain by appropriately changing the order of integration thanks to Fubini's theorem, and by using the initial and boundary conditions together with \eqref{c0-FP}
\be*
 0 
&\!\!\!\!\!\!=& \!\!\!\!\!
\int_0^s\!\!\int_0^\infty \!\! \partial_t (p^ng^n) \d t \,\d x 
+\! \int_0^s\!\!\int_0^\infty\!\! \partial_x (B^np^ng^n)(t,x)\d t\, \d x 
-\! \int_0^s\!\!\int_0^\infty\!\! \partial_x \big\{\partial_{x} \big(A^np^n\big)g^n\!-\! A^np^n\partial_x g^n\big\}(t,x)\d t \,\d x
\\
&\!\!\!=&\!\!\!
\int_0^\infty p^n(s,x)\delta_y(\d x) - \int_0^\infty \rho(x)g^n(0,x,s,y) \d x 
+  \int_0^s \partial_{x}(A^n p^n)(t,0)g^n(t,0,s,y) \d t
\\
&\!\!\!= &\!\!\!
p^n(s,y) - \int_0^\infty  p^n_0(x)g^n(0,x,s,y) \d x 
-  \int_0^s g^n(t,0,s,y)\dot{c}^{n}_0(t) \d t,  \quad \mbox{for all } s,y>0,
\ee*
which is exactly \eqref{pn}. Integrating $\overline B^n(s,\cdot)p^n(s,\cdot)$ over $(0,\infty)$, one obtains
\be* 
c^n(s) &=& \int_0^\infty \overline B^n(s,y) p^n(s,y)\d y \\
&=& \int_0^\infty\overline B^n(s,y) \int_0^\infty  p^n_0(x)g^n(0,x,s,y) \d x \d y
+  \int_0^\infty\overline B^n(s,y) \int_0^s g^n(t,0,s,y)\dot{c}^{n}_0(t) \d t\d y\\
& =&\E\left[\overline{B}^{n}\left(s,R^{n, 0,X^n_0}_s\right)\right]
+ \int_{0}^s\E\left[\overline{B}^{n}\left(s,R^{n, t,0}_s\right)\right]\d c^n_0(t) \\
&=&  \E\left[\overline{B}^{n}\left(s,R^{n, 0,X^n_0}_s\right)\right]
- \int_{c^{n}_0(s)}^1\E\left[\overline{B}^{n}\left(s,R^{n, (c_{0}^{n})^{-1}(u),0}_s\right)\right]\d u,
\ee*
where the last equality follows from the change of variable $u=c^n_0(t)$.
\end{proof}

\subsubsection{The general case} 

Our next objective is to send $n\to\infty$ in the expression \eqref{pn} of $p^n$. Recall that the sequence $X^n:=Y^n H\big(I^n)$, with $I^n_t:=\inf_{0 \le s \le t} Y^n_s$ and marginals $\mu^n_t=\Lc(X^n_t)$, satisfies the SDE 
\be*
   \d X^n_t = \1_{\{X^n_t>0 \}} \Big[ B^n(t,X^n_t,\mub^n_t)\d t +\Sigma^n(t,X^n_t,\mub^n_t)\d W_t\Big],\mbox{ with } \mub^n_t=\Lc\big(Y^n_t H^n\big(I^n_t \big)\big),
   \mbox{ and } \Lc(X^n_0)=\rho^n.
\ee*
 As we showed in the proof of Theorem \ref{thm:approx_regularity}, after possibly passing to asubsequence, we may assume that $X^n\longrightarrow X$ in $\Wc_1$, where the limit $X$ satisfies the SDE
\be*
   dX_t = \1_{\{X_t>0 \}} \Big[ B(t,X_t,\mu_t)\d t +\Sigma(t,X_t,\mu_t)\d W_t\Big],\quad \mbox{with } \mu_t=\Lc(X_t)  \mbox{ and } \Lc(X_0)=\rho. 
\ee*
In particular, one has for a.e. $(t,x) \in \in[0,T] \x \R$ that
\begin{equation}\label{eq:limBnSigman}
\lim_{n\to\infty} (B^n,\Sigma^n)(t,x,\mub^n_t)
=(B,\Sigma)(t,x,\mu_t),~ \lim_{n\to\infty} c^n_0(t)=c_0(t), ~\mbox{and}~
\lim_{n\to\infty} c^n_1(t,\mub^n_t)=c_1(t,\mu_t).
\end{equation}
\begin{proof}[Proof of Theorem \ref{thm:more}]
By Proposition \ref{prop:uniqueness}, we have for arbitrary $f \in C^\infty_c((0,\infty))$ that
\begin{eqnarray}
    \E \left[ f(X^n_s)\right]
    &=&
    \int_0^\infty \E\left[ f(R^{n,0,x}_s)\right]\;p^n_0(x)\;\d x + \int_0^s \E\left[ f(R^{n,t,0}_s)\right] \dot{c}^{n}_0(t)\d t
    \nonumber\\
    &=&
    \int_0^\infty \E\left[ f(R^{n,0,x}_s)\right]\;\rho^n(\d x) + \int_{c^n_0(s)}^1 \E\left[ f(R^{n,\beta^n_t,0}_s)\right] \d t,
    ~\mbox{for all}~s\ge 0,
\label{eq:fpn}
\end{eqnarray}
where $\beta^n:=(c^n_0)^{-1}$ is the generalized inverse of the map $c^n_0$.
Notice first that $\E \left[ f(X^n_s)\right]\longrightarrow\E \left[ f(X_s)\right]$, by the weak convergence of $(X^n)_{n \ge 1}$. As $\lim_{n \to \infty}\beta^n=(c_0)^{-1}$ a.e. we may pass to the limit in \eqref{eq:fpn} by using \eqref{eq:limBnSigman}, and obtain:  
\begin{align} \label{eq:limit_equation}
    \E \left[ f(X_s)\right]
    =
    \int_0^\infty \E\left[ f(R^{0,x}_s)\right]\;\rho(\d x) + \int_{c_0(s)}^1 \E\left[ f(R^{c_0^{-1}(t),0}_s)\right] \d t,
    ~\mbox{for a.e.}~s \ge 0.
\end{align}
By the arbitrariness of $f$, this proves the existence of a Borel measurable map $p:[0,T] \x [0,\infty) \to [0,\infty)$ satisfying $\E \left[\int_0^T \varphi(t,X_t)\;\d t\right]
    =
    \int_{[0,T] \x \R_+} \varphi(t,x)\;p(t,x)\; \d x\; \d t$, for all $\varphi \in C^\infty_c((0,T) \x (0,\infty))$, where
\begin{align} \label{eq:density_limit}
    p(s,y)  = \int_0^\infty g(0,x,s,y)\rho(\d x) +  \int_{c_0(s)}^1 g((c_0)^{-1}(t),0,s,y)   \;\d t,
~\mbox{for all}~y>0,~\mbox{and a.e.}~s,
\end{align}
with a version $g(t,x,s,y)$ of the density of the process $R^{t,x}_s$ defined by the limiting SDE \eqref{eq:SDE}. Finally the required statement of \Cref{prop:uniqueness} follows by
direct integration of the function $\overline{B}$.
\end{proof}

\section{Uniqueness in the case of constant sign drift coefficient}\label{sec:uniqueness}

This section is devoted to the proof of Theorem \ref{thm:uniqueness}. We specialize the discussion to the case where the coefficients $b,\sigma$ are  independent of the distribution variable and $b$ has constant sign. We aim at justifying the uniqueness of the solution to \eqref{eq:MFG-equilibriumSDE}. 

First, if $b\le 0$, then $(B,\Sigma)=(b,\sigma)$, and the solution to the absorbed SDE 
\be*
   \d X_t = \1_{\{X_t>0 \}} \left[ b(t,X_t)\d t +\sigma(t,X_t)\d W_t\right]
\ee*
is the stopped process $(Y_{t\wedge \tau})_{0\le t\le T}$, where $Y$ is the unique solution to the SDE $ \d Y_t =  b(t,Y_t)\d t +\sigma(t,Y_t)\d W_t. $ Consequently the uniqueness of $X$ is inherited from that of $Y$.

\vspace{1mm}

In the rest of this section, we focus on the non-trivial case $b>0$ on $\R_+$, and recall that the MFG equilibrium SDE in this case is
$$
   \d X_t = \1_{\{X_t>0 \}} \big[B^c(t,X_t)\d t +\Sigma^c(t,X_t)\d W_t\big],
   ~\mbox{where}~
   B^c(t,x):=\frac{b(t,x)+c_1(t)}{1+c_0(t)},
   ~\mbox{and}~ 
   \Sigma^c(t,x):=\frac{\sigma(t,x)}{1+c_0(t)},
$$
where the coefficients $c=(c_0,c_1)$ are defined by
\be* 
c_0(t):=\mu_t[\R_+^*] &\mbox{and} & c_1(t):=\int_{(0,\infty)}b(t,x)\mu_t(\d x),
\ee*
for an arbitrary solution $X$ to \eqref{eq:MFG-equilibriumSDE} with $\mu_t:=\Lc(X_t)$. Throughout this section, we use $R>r>0$ to denote generic constants that may vary from line to line during the proof. For a measurable map $f=(f_0,f_1): \R_+\to\R^2$, set $\|f\|_t:=\|f_0\|_t+\|f_1\|_t$  with $\|f_i\|_t:=\sup_{0\le u\le t}|f_i(u)|$. By a straightforward computation, there exist $R>r>0$  such that for all maps $f,f':\R_+\to\R^2_+$ 
\beq 
&\|B^f\|\vee\|\Sigma^f\|\vee\|\partial_xB^f\|\vee\|\partial_x\Sigma^f\|
\le 
R , ~~\inf_{(t,x)}\Sigma^f(t,x)\ge r,&
\label{bound-coefs}
\\ 
&\|(B^f-B^{f'})(t,\cdot)\| 
+ \|(\Sigma^{f}-\Sigma^{f'})(t,\cdot)\|
 + \|(\partial_x\Sigma^{f}-\partial_x\Sigma^{f'})(t,\cdot)\|
 \le 
 R\,\|f-f'\|_t,~ \mbox{for all}~ t\in [0,T].~~~&
 \label{diff-coefs} 
\eeq  
\begin{lemma} \label{lemm:continuity}
Let the conditions of Theorem \ref{thm:uniqueness} hold. 

\vspace{1mm}

\noindent {\rm (i)} The function $c:[0,T]\to [0,1] \x \R_+$ is H\"older continuous. In particular,  $B^c,\Sigma^c$ are H\"older continuous in $t$ and Lipschitz in $x$.

\vspace{1mm}

\noindent {\rm (ii)} For each $t>0$, there exists a sub--probability density $p(t,\cdot)$ supported on $(0,\infty)$ such that 
\be*
\mu_t(\d x) = \big(1-c_0(t)\big)\delta_0(\d x) + p(t,x)\d x ,&\mbox{and}& c_0(t) = \int_0^{\infty}p(t,x)\d x. 
\ee*
\end{lemma}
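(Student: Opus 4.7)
The plan is to first establish the density decomposition (ii) from the non-absorbed SDE and classical ellipticity, then use it together with Gaussian hitting-time estimates to derive H\"older continuity of $c=(c_0,c_1)$, from which the regularity of the coefficients $B^c,\Sigma^c$ is immediate.

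For part (ii), I would first observe that, since $b$ has constant sign with at most linear growth in $x$ and $\mu_t$ has bounded first moment by \Cref{thm:main}, the quantity $c_1(t)=\int_{(0,\infty)}b(t,y)\mu_t(\d y)$ is bounded in $t$, while trivially $c_0(t)\in[0,1]$. Hence $B^c$ and $\Sigma^c$ are Borel measurable in $t$, Lipschitz in $x$ uniformly in $t$, bounded on bounded subsets of $\R$, and $\Sigma^c$ is bounded below away from zero. Consider then the non-absorbed strong solution $Y$ of $\d Y_t=B^c(t,Y_t)\d t+\Sigma^c(t,Y_t)\d W_t$, $Y_0\sim\rho$. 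By uniform ellipticity and Lipschitz continuity in $x$, the law of $Y_t$ admits a density for all $t>0$, e.g.\ via Kusuoka \cite{KUSUOKA2017359} or Aronson-type bounds. Since $X$ equals in law the stopped process $Y_{\cdot\wedge\tau_0}$ with $\tau_0:=\inf\{s\ge 0:Y_s\le 0\}$, the restriction of $\mu_t$ to $(0,\infty)$ is dominated by $\Lc(Y_t)$ and hence admits a density $p(t,\cdot)$; the atom at the origin carries mass $1-c_0(t)$ by definition of $c_0$.

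For the H\"older continuity of $c_0$ in part (i), I would use the strong Markov property to write, for $0\le s<t\le T$,
\[
c_0(s)-c_0(t)
\,=\,
\P[s<\tau\le t]
\,=\,
\int_0^\infty p(s,y)\,\P^{s,y}\!\bigl[\inf_{u\in[s,t]}Y_u\le 0\bigr]\d y,
\]
where $\P^{s,y}$ denotes the law of the non-absorbed diffusion started from $y$ at time $s$. An Aronson-type Gaussian bound combined with a standard reflection-style estimate yields $\P^{s,y}[\inf_{[s,t]}Y\le 0]\le C\,\bigl(1\wedge \tfrac{\sqrt{t-s}}{y}\,e^{-cy^2/(t-s)}\bigr)$. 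Splitting the integral over $\{y\le\sqrt{t-s}\}$ and its complement, and combining with Aronson's upper bound on $p(s,\cdot)$ together with the integrability condition on $p_0$ in assumption (iii) of \Cref{thm:uniqueness}, I obtain $c_0(s)-c_0(t)\le C\sqrt{t-s}$. For $c_1$, I decompose $c_1(t)-c_1(s)=\int_{(0,\infty)}(b(t,\cdot)-b(s,\cdot))\d\mu_t+\int_{(0,\infty)} b(s,\cdot)(\d\mu_t-\d\mu_s)$: the first term is $O(|t-s|^\gamma)$ by H\"older continuity of $b$ in $t$; the second is controlled using Lipschitz continuity of $b$ in $x$, the moment estimate $\E[|X_t-X_s|]\le C\sqrt{|t-s|}$ inherited from the SDE with bounded coefficients, and the H\"older estimate already derived for $c_0$ to absorb the mass flowing into the atom at $0$. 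This gives $|c_1(t)-c_1(s)|\le C|t-s|^{\gamma\wedge 1/2}$. The concluding statement on $B^c=(b+c_1)/(1+c_0)$ and $\Sigma^c=\sigma/(1+c_0)$ is then immediate: Lipschitz continuity in $x$ is inherited from $b,\sigma$ since $c$ does not depend on $x$, and H\"older continuity in $t$ follows by combining H\"older-in-$t$ regularity of $b,\sigma$ and of $c$.

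The main technical obstacle is obtaining the Aronson/hitting-time Gaussian bounds uniformly in the merely Borel (possibly discontinuous) time dependence of $B^c,\Sigma^c$, and controlling the behavior of $p(s,\cdot)$ as $s\downarrow 0$; this last point is precisely where the integrability hypothesis on the initial density $p_0$ in assumption (iii) of \Cref{thm:uniqueness} plays a crucial role.
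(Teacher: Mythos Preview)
Your proposal is correct and part (ii) coincides with the paper's argument (which simply refers back to \Cref{lem:decom}). For part (i), however, the paper takes a more economical route on both components of $c$.

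For $c_0$, instead of invoking Aronson-type Gaussian bounds under merely Borel time dependence---the obstacle you rightly flag---the paper defers to the auxiliary \Cref{lem:ito-martingale}: a Girsanov change of measure removes the drift, so that under an equivalent $\Q$ the process is a local martingale; then Doob's maximal inequality combined with a Kusuoka density bound (which needs only Lipschitz-in-$x$ coefficients, no $t$-regularity) yields the estimate $|\P[\tau>t]-\P[\tau>s]|\le C\,\E[Z_0^{-2}]\,|t-s|^{1/6}$. This is where the hypothesis $\int_0^\infty x^{-2}p_0(x)\,\d x<\infty$ from \Cref{thm:uniqueness}(iii) enters, exactly as you anticipated, but the Girsanov reduction sidesteps the delicate issue of Gaussian heat-kernel bounds with irregular time dependence. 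For $c_1$, the paper avoids your two-term decomposition via the algebraic identity
\[
c_1(t)=\E\big[b(t,X_t)\1_{\{X_t>0\}}\big]=\E[b(t,X_t)]-b(t,0)\big(1-c_0(t)\big),
\]
valid because $X_t\in\{0\}\cup(0,\infty)$; this reduces the H\"older continuity of $c_1$ directly to that of $c_0$, of $t\mapsto b(t,0)$, and of $t\mapsto\E[b(t,X_t)]$, the last being standard from $\E|X_t-X_s|\le C\sqrt{|t-s|}$. Your decomposition, once the atom at zero is handled, collapses to the same identity, so the difference is purely one of presentation.
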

\begin{proof}
The proof of (ii) is as same as that for Lemma \ref{lem:decom} and we thus omit it. We start proving (i) for $c_0$. Let $Y$ be the unique solution to the SDE 
\be*
   \d Y_t = B^c(t,Y_t)\d t +\Sigma^c(t,Y_t)\d W_t,\quad \forall t\ge 0
\ee*
such that $Y_0=X_0$. Then it follows that $X_t=Y_{t\wedge \tau}$ with $\tau:=\inf\{t\ge 0: Y_t\le 0\}$. Hence, $c_0(t)=\P[\tau>t]$ is H\"oder continuous by  \Cref{lem:ito-martingale} (i). The H\"older continuity of $c_1$ follows from  the probabilistic representation
\be* 
c_1(t) = \E[b(t,X_t){\1}_{\{X_t>0\}}] = \E[b(t,X_t)] - \E[b(t,X_t){\1}_{\{X_t=0\}}] = \E[b(t,X_t)] - b(t,0)(1-c_0(t)).
\ee*
The required H\"older continuity now follows by standard estimates. Finally the H\"older continuity of $B^c$ and $\Sigma^c$ in $t$ follows by immediate composition, while the Lipschitz continuity in $x$ is directly inherited from that of $b$ and $\sigma$.
\end{proof}
Thanks to the H\"older continuity of the coefficients $B^c,\Sigma^c$ in $t$, the autonomous characterization \eqref{eq:c} of $c$ in \Cref{prop:uniqueness} also holds. Hence, we see that the uniqueness of $X$ can be derived by showing the uniqueness of such $c$. We first rewrite the representation \eqref{eq:c} by using the notation $\vec{b}_0(s,y):=\big(1, b(s,y)\big)$, and after an integration by parts:
\be* 
c(s)  
&=& 
\int_0^\infty  p_0(x)\d x\int_0^\infty  \vec{b}_0(s,y)g^c(0,x,s,y) \d y +  \int_0^s \dot{c}_0(t) \d t \int_0^\infty \vec{b}_0(s,y)g^c(t,0,s,y) \d y
\\
&=& 
\int_0^\infty \int_0^\infty \vec{b}_0(s,y)g^c(0,x,s,y)\rho(x)   \d x\,\d y 
+\frac{c_0(s)}{2}\vec{b}_0(s,0)
 \\
 &&
 -\int_0^\infty  \vec{b}_0(s,y)g^c(0,0,s,y) \d y 
 -\int_0^s \int_0^\infty  \vec{b}_0(s,y)\partial_t g^c(t,0,s,y)c_0(t) \d y\, \d t  
\ee*
where we recall that $g^c(t,x,s,\cdot)$ denotes the density function of $R^{t,x}_s$.

Let $\Cc_+([0,T])\subset C([0,T])$ be the subset of non-negative H\"older continuous functions. Define the operator $\Lambda=(\Lambda_0,\Lambda_1):\Cc_+([0,T])^2 \to C([0,T])^2$, defined for all $f\in \Cc_+([0,T])^2$ and $s\in [0,T]$ by
\be*  
\Lambda[f](s)
\!:=\! 
\!\!\int_0^\infty\!\!\!\!\! \int_0^\infty \!\!\!\!p_0(x)\vec{b}(s,y)g^f\!(0,x,s,y) \d x\,\d y 
- \!\!\int_0^\infty\!\! \vec{b}(s,y)g^f\!(0,0,s,y) \d y
- \!\!\int_0^s\!\!\! \int_0^\infty \!\!\!\!f_0(t) \vec{b}(s,y)\partial_t g^f\!(t,0,s,y) \d y\, \d t,
\ee* 
with obvious definition of $g^f(t,x,s,\cdot)$ and $\vec{b}:=(2,b)$. Define further $\overline{\Lambda}$ by 
\be* 
\overline{\Lambda}[f]
:=
\big(\Lambda_0[f], \Lambda_1[f]+b_0f_0\big),\quad \mbox{with } b_0:=\frac12 b(\cdot,0).
\ee* 
Therefore, any $c$ corresponding to a solution $X$ of \eqref{eq:MFG-equilibriumSDE} must be a fixed point of $\overline{\Lambda}$, i.e. $c=\overline{\Lambda}[c]$, and the following uniqueness result of $c$ implies the required uniqueness result of Theorem \ref{thm:uniqueness}.
 
 \begin{proposition}\label{prop:ac}
$\overline{\Lambda}$ has at most one fixed point on $\Cc_+([0,T])^2$ and thus the solution to \eqref{eq:MFG-equilibriumSDE} is unique.
\end{proposition}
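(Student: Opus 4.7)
The plan is to show directly that the fixed point equation $c=\overline{\Lambda}[c]$ admits at most one solution $c\in\Cc_+([0,T])^2$, from which the uniqueness of $X$ follows because any solution of \eqref{eq:MFG-equilibriumSDE} produces such a fixed point (by \Cref{prop:uniqueness} combined with \Cref{lemm:continuity}), and once $c$ is fixed the MFG equilibrium SDE reduces to the standard (non--McKean--Vlasov) SDE with coefficients $B^c,\Sigma^c$, which admits a unique strong solution thanks to the bounds \eqref{bound-coefs}--\eqref{diff-coefs} and Remark \ref{rem:bconstantsign}. So given two fixed points $c,c'\in\Cc_+([0,T])^2$, I would set $\delta(s):=\|c-c'\|_s$ and subtract the defining identities to obtain a Volterra--type inequality $\delta(s)\le\int_0^s K(s,t)\,\delta(t)\,\d t$ with an integrable kernel $K$, and conclude by a generalized Gronwall lemma that $\delta\equiv 0$.

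The key analytic tool is the parametrix construction of the transition density $g^f(t,x;s,y)$ associated with the SDE \eqref{eq:SDE} with coefficients $(B^f,\Sigma^f)$: under the H\"older regularity in $t$ and Lipschitz regularity in $x$ granted by \Cref{lemm:continuity} together with the uniform ellipticity $\Sigma^f\ge r>0$, one obtains Gaussian upper bounds
\begin{equation*}
|g^f(t,x;s,y)|+\sqrt{s-t}\,|\partial_x g^f(t,x;s,y)|+(s-t)\,|\partial_t g^f(t,x;s,y)|\le \frac{C}{\sqrt{s-t}}\,\exp\!\Big(\!-\gamma\frac{(y-x)^2}{s-t}\Big),
\end{equation*}
as well as the difference estimate
\begin{equation*}
|g^f(t,x;s,y)-g^{f'}(t,x;s,y)|\le \frac{C\,\|f-f'\|_s\,(s-t)^{\eta}}{\sqrt{s-t}}\,\exp\!\Big(\!-\gamma\frac{(y-x)^2}{s-t}\Big),
\end{equation*}
for some $\eta>0$, with analogous bounds for $\partial_t g^f-\partial_t g^{f'}$ carrying an extra factor $(s-t)^{-1}$. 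The second bound is produced by writing $g^f$ through the parametrix series, whose leading term is the frozen Gaussian and whose remainder is controlled by \eqref{diff-coefs}. These are precisely the \emph{delicate parametrix estimates} announced in the introduction.

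I would then insert these estimates into the three pieces defining $\overline{\Lambda}[c]-\overline{\Lambda}[c']$. The first piece $\int\!\int p_0(x)\vec{b}(s,y)(g^c-g^{c'})(0,x,s,y)\,\d x\,\d y$ is bounded by $C\,s^{\eta-1/2}\,\|c-c'\|_s$ upon integrating the Gaussian in $y$, and the second piece $-\int\vec{b}(s,y)(g^c-g^{c'})(0,0,s,y)\,\d y$ is handled using hypothesis (iii) of Theorem \ref{thm:uniqueness} — more precisely, the integrability of $p_0$ near zero guarantees that the boundary term does not blow up as $s\downarrow 0$, after rescaling $y=\sqrt{s}\,u$ in the Gaussian kernel. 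The third piece splits into
\begin{equation*}
\int_0^s (c_0-c'_0)(t)\!\int\!\vec{b}(s,y)\,\partial_t g^c(t,0;s,y)\,\d y\,\d t\;+\;\int_0^s c'_0(t)\!\int\!\vec{b}(s,y)\,\partial_t(g^c-g^{c'})(t,0;s,y)\,\d y\,\d t,
\end{equation*}
and each contributes a kernel of the form $(s-t)^{-1+\eta'}$ in $\delta(t)$, hence integrable.

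The main obstacle is the apparent non--integrability of $\partial_t g^f(t,0;s,y)$ at $t=s$ in the third integral: the pointwise parametrix bound only yields $(s-t)^{-1}$ after integration in $y$, which is not integrable. I would overcome this by exploiting the H\"older continuity of $c_0$ provided by Lemma \ref{lemm:continuity}: subtracting and adding $c_0(s)\,\partial_t g^c(t,0;s,y)$ produces a term in which the bracket $(c_0(t)-c_0(s))$ gives a gain of $(s-t)^{\alpha}$, making the $y$--integrated kernel locally of order $(s-t)^{\alpha-1}$, which is integrable; the remaining piece can be computed through the relation $\int_0^\infty \partial_t g^c(t,0;s,y)\,\d y=\partial_t\P[R^{t,0}_s>0]$, whose primitive in $t$ is controlled by standard Brownian local--time / reflection estimates applied to the diffusion $R^{t,0}$ with bounded drift near zero. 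Combining all three bounds yields
\begin{equation*}
\delta(s)\;\le\; C\int_0^s (s-t)^{\eta''-1}\,\delta(t)\,\d t,\qquad\eta''>0,
\end{equation*}
so that the generalized Gronwall lemma (for singular Volterra kernels) forces $\delta\equiv 0$, completing the proof.
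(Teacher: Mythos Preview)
Your overall architecture—reduce uniqueness of \eqref{eq:MFG-equilibriumSDE} to uniqueness of the fixed point $c=\overline\Lambda[c]$, then derive a contraction estimate on $\delta(s)=\|c-c'\|_s$—matches the paper's, and you correctly isolate the central difficulty: the singularity of $\int_0^\infty\vec b(s,y)\,\partial_t g^c(t,0;s,y)\,\d y$ near $t=s$. But the analytic ingredients you invoke do not deliver what you need, in two places.

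First, the pointwise stability bound you quote, $|g^f-g^{f'}|\le C\,\|f-f'\|_s\,(s-t)^{\eta-1/2}\exp(-\gamma(y-x)^2/(s-t))$ with $\eta>0$, is not available: the leading frozen--Gaussian term $g_0^f-g_0^{f'}=\phi(A^f,y-x)-\phi(A^{f'},y-x)$ only gives $\eta=0$, since $|A^f-A^{f'}|\le C(s-t)\|f-f'\|_s$ is exactly cancelled by $|\partial_u\phi(u,\cdot)|\lesssim u^{-1}\phi$. With $\eta=0$, integrating your bound in $y$ against $\vec b$ and then in $x$ against $p_0$ yields only $C\,\|c-c'\|_s$ for the first two pieces—no small prefactor, hence no contraction. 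Second, your workaround for the $\partial_t g$ term—subtracting $(c_0-c'_0)(s)$ and invoking the H\"older estimate of \Cref{lemm:continuity}—destroys the structure you need: the bound $|(c_0-c'_0)(t)-(c_0-c'_0)(s)|\le C(s-t)^\alpha$ is \emph{uniform} in $c,c'$ and carries no factor of $\|c-c'\|_s$, so after integrating against $(s-t)^{-1}$ you obtain an additive constant in the inequality for $\delta(s)$, and the remaining piece $(c_0-c'_0)(s)\int_0^s K(t)\,\d t$ is only $\le C\,\delta(s)$ with a constant that is not small. Neither allows a Gronwall or contraction argument to close.

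The paper's proof avoids both obstacles by a single device you are missing: instead of pointwise parametrix bounds, it establishes \emph{integrated} estimates term by term in the parametrix series (\Cref{lem1}). The point is that integrating $\beta(s,y)(g_0^f-g_0^{f'})(t,x,s,y)$ or $\beta(s,y)\,\partial_t g_0^f(t,0,s,y)$ in $y$ against a smooth weight permits an integration by parts in $y$ against the Gaussian kernel, gaining a full factor $(s-t)^{1/2}$. This produces $\int_0^\infty\beta\,\partial_t g_0^f(t,0,s,y)\,\d y=O((s-t)^{-1/2})$ (not $(s-t)^{-1}$), and $\int_0^\infty\beta\,(g_0^f-g_0^{f'})(t,x,s,y)\,\d y=O\big((x\phi(r\sqrt{s-t},x)+(1+x)\sqrt{s-t})\|f-f'\|_s\big)$. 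Hypothesis (iii) of \Cref{thm:uniqueness} is then used to integrate the boundary contribution $x\,\phi(r\sqrt{s},x)$ against $p_0(x)$ in the \emph{first} piece (not the second, as you suggest). These sharper integrated bounds yield $\|\Lambda[c]-\Lambda[c']\|_s\le R\sqrt{(s-t^*)^+}\,\|c-c'\|_s$, a genuine local contraction that forces $c=c'$.
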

To prove Proposition \ref{prop:ac}, we need some stability  estimates for the map $f\longmapsto g^f$, which are summarized in Proposition \ref{lem1}. Recall the Parametrix expressions of the density $g^f$ for $f\in \Cc_+([0,T])^2$, see e.g. Aronson \cite{AronsonBounds} and Konakov, Kozhina \& Menozzi \cite{Konakov2015STABILITYOD}:
\be*
g^f(t,x,s,y) 
= 
\sum_{k=0}^{\infty} g^f_0\!\otimes\! G_f^{(k)}(t,x,s,y), 
~\mbox{with}~  g^f_0\!\otimes\! G_f^{(0)}:=g^f_0
~\mbox{and}~ g^f_0\!\otimes\! G_f^{(k)}:=\big(g^f_0\!\otimes\! G_f^{(k-1)}\big)\!\otimes\! G_f,~k\ge 1,
\ee*
where we used the space-time convolution notation
\be*
\psi_1\otimes\psi_2(t,x,s,y) := \int_t^s\!\!\!\!\int_{-\infty}^\infty \psi_1(t,x,u,z)\psi_2(u,z,t,y)\d z\,\d u,
\ee* 
for all scalar functions $\psi_1,\psi_2$ defined on the appropriate spaces, and
\begin{eqnarray*} 
g^f_0(t,x,s,y)
&:=&
\phi\big(A^f(t,s,y),y-x\big),
\quad \mbox{with }
A^f(t,s,y):=\int_t^s \Sigma^f(u,y)^2\d u, ~~
\phi(u,z):=\frac{1}{\sqrt{2\pi u}}e^{\frac{-z^2}{2u}},
\\
G_f(t,x,s,y) 
&:=& \frac12\big(\Sigma^f(t,x)^2-\Sigma^f(t,y)^2\big)\partial^2_{xx}g_0^f(t,x,s,y) 
                           + B^f(t,x)\partial_{x}g_0^f(t,x,s,y).
\end{eqnarray*}
\begin{proof}[Proof of Proposition \ref{prop:ac}]
We argue by contradiction. 
Let $c,c' \in \Cc_+([0,T])^2$ be two different fixed points of $\overline\Lambda$ and suppose to the contrary that $t^*:=\inf\{t>0: c(t)\neq c'(t)\}<T$. We claim that 
\beq \label{eq:contraction}
\|\Lambda[c] - \Lambda[c']\|_s \le 
R\sqrt{(s-t^*)^+}\,\|c-c'\|_s,\quad \forall s\in [0,T].
\eeq
for some $R>0$. Before proving this, let us show that it induces the required contradiction. For $s>t^*$
\be* 
\|c-c'\|_s
=
\big\|\overline\Lambda[c] - \overline\Lambda[c']\big\|_s 
&\!\!\le&\!\!
\|\Lambda[c] \!-\! \Lambda[c']\|_s
+\frac12 \|b_0\|_T \|c_0 \!-\! c'_0\|_s
\\
&\!\!=&\!\!
\|\Lambda[c] \!-\! \Lambda[c']\|_s 
+\frac12 \|b_0\|_T \|\Lambda_0[c] \!-\! \Lambda_0[c']\|_s
\;\le\;
\Big(1+\frac12 \|b_0\|_T\Big)\|\Lambda[c] \!-\! \Lambda[c']\|_s, 
\ee* 
as $\Lambda$ and $\overline\Lambda$ have the same first component $\Lambda_0=\overline\Lambda_0$. Then, it follows from \eqref{eq:contraction} that 
$$0\;<\; \|c-c'\|_s\le 
R\Big(1+\frac{1}{2}\|b_0\|_T\Big)\sqrt{(s-t^*)^+}\|c-c'\|_s,$$
which yields a contradiction as $s>t^*$ is sufficiently close to $t^*$. 

It remains to prove  \eqref{eq:contraction}. Without loss of generality, we may reduce to the case $t^*=0$. Recall that 
\be*  
\Lambda[f](s)
\!:=\! 
\!\!\int_0^\infty\!\!\!\!\! \int_0^\infty \!\!\!\!p_0(x)\vec{b}(s,y)g^f\!(0,x,s,y) \d x\,\d y 
- \!\!\int_0^\infty\!\! \vec{b}(s,y)g^f\!(0,0,s,y) \d y
- \!\!\int_0^s\!\!\! \int_0^\infty \!\!\!\!f_0(t) \vec{b}(s,y)\partial_t g^f\!(t,0,s,y) \d y\, \d t,
\ee* 
for $f\in \Cc_+([0,T])^2$, which yields 
\beq  \label{ineq:decomp}
\big| \Lambda[c](s)-\Lambda[c']\big|(s)
&\le& 
\Big|\int_0^\infty  p_0(x)I_s(x)\d x\Big| + |I_s(0)| +\|c-c'\|_s\int_0^s J_s(t) \d t + \int_0^s K_s(t) \d t,
\\
\mbox{with}
&&
\hspace{-9mm}
I_s(x) \!:=\!\int_0^\infty\!\!\vec{b}(s,y) (g^c\!-\!g^{c'})(0,x,s,y)\d y,~
J_s(t) \!:=\!\Big| \int_0^{\infty}\!\! \vec{b}(s,y) \partial_t g^c(t,0,s,y)\d y\Big|,
\nonumber\\
\mbox{and}
&&
\hspace{-9mm}
K_s(t) :=\Big| \int_0^{\infty} \!\!\vec{b}(s,y)\partial_t (g^c\!-\! g^{c'})(t,0,s,y)\d y \Big|. 
\nonumber
\eeq 
Applying (componentwise) the estimates \eqref{ineq:stab1} and \eqref{ineq:stab2} of \Cref{lem1} below, one has for $t=0$:
$$ 
\left|\int_0^\infty p_0(x) I(x)\d x\right| 
 \le 
 \sum_{k=0}^\infty
 \int_0^\infty p_0(x) \d x \left |\int_0^\infty\vec{b}(s,y)\big(g^c_0\otimes G_c^{(k)}-g^{c'}_0\otimes G_{c'}^{(k)}\big)
                                                                                  (0,x,s,y)\d y \right | 
 \le 
 R\sqrt{s}\|c-c'\|_s,
$$
 as $\sup_{\lambda>0} \int_{0}^\infty p_0(\lambda x)e^{-x^2/2}\d x<\infty$ by Condition (iii) of Theorem \ref{thm:uniqueness}, and
 \be*
 \left|I(0)\right| 
 \le  
 \sum_{k=0}^\infty 
 \left |\int_0^\infty\vec{b}(s,y)\big(g^c_0\otimes G_c^{(k)}-g^{c'}_0\otimes G_{c'}^{(k)}\big)(0,0,s,y)\d y \right |
 \le R\sqrt{s}\|c-c'\|_s.
\ee* 
Applying (again componentwise) the estimates \eqref{ineq:stab5} and \eqref{ineq:stab6} of \Cref{lem1} below, it follows that
\be* 
\int_0^s J_s(t) \d t 
\le  
\sum_{k=0}^\infty\int_0^t \Big|\int_0^\infty \vec{b}(s,y)\partial_t g^f_0\otimes G_f^{(k)}(t,0,s,y)\d y\Big|\d t 
\le 
R\int_0^s\sqrt{s-t}\;\d t \le R\sqrt{s}, 
\ee*
Finally, by \eqref{ineq:stab3} and \eqref{ineq:stab4} of \Cref{lem1} below, we get:
\be* 
\int_0^s K_s(t) \d t 
\le  
\sum_{k=0}^\infty\int_0^t \Big|\int_0^\infty \vec{b}(s,y)\partial_t\big(g^f_0\otimes G_f^{(k)}-g^{f'}_0\otimes G_{f'}^{(k)}\big)(t,0,s,y)\d y\Big|\d t \le R\sqrt{s}\|c-c'\|_s. 
\ee*
Plugging the last estimates in \eqref{ineq:decomp} yields \eqref{eq:contraction} for $t^*=0$, and thus completing the proof. 
\end{proof}
The last proof refers to the following statement which uses the Gamma function $\Gamma(z):=\int_0^\infty a^{z-1}e^{-a}\d a$.

\begin{proposition}\label{lem1}
Let $f,f'\in \Cc_+([0,T])^2$, and $\beta: \R_+\times\R\to\R$ be with bounded derivative. Then, under the conditions of Theorem \ref{thm:uniqueness}, there exist $R>r>0$ s.t. the following inequalities hold for all $0\le t<s\le T$, $x\ge 0$, and $k\ge 1$:
\beq 
\Big|\int_0^\infty \!\!\! \beta(s,y)\big(g_0^f- g_0^{f'}\big)(t,x,s,y)\d y\Big| 
\hspace{5mm}&\hspace{-14mm}\le&\!\!\!\!\!\!\!\!\!\!\!\! \!\!\!\!
\big[x \phi(r(s-t)^{\frac12},x)+(1+x)(s-t)^{\frac12}\big]R \|f-f'\|_s, ~~~
\label{ineq:stab1}\\
\Big|  \int_0^\infty \!\!\!\beta(s,y)\big(g^f_0\otimes G_f^{(k)}-g^{f'}_0\otimes G_{f'}^{(k)}\big)(t,x,s,y) \d y \Big|  
&\!\!\le &\!\!  \frac{R^{k+1}(1+x)(s-t)^{\frac{k}{2}}}{\Gamma(1+\frac{k}{2})}\|f-f'\|_s,
\label{ineq:stab2}\\
\Big| \int_0^{\infty} \!\!\!\beta(s,y)\partial_t(g_0^f- g_0^{f'})(t,0,s,y)\d y \Big| 
&\!\!\le& \!\!
R(s-t)^{-\frac12}\|f-f'\|_s, 
\label{ineq:stab3}\\
\Big|\int_0^\infty \!\!\!\beta(s,y)\partial_t\big(g^f_0\otimes G_f^{(k)}-g^{f'}_0\otimes G_{f'}^{(k)}\big)(t,0,s,y)\d y\Big| 
&\!\!\le&  \!\!
\frac{R^{k+2}}{\Gamma(\frac{k}{2}-1/2)}(s-t)^{\frac{k}{2}-1}\|f-f'\|_s,
\label{ineq:stab4}\\
\Big| \int_0^{\infty} \!\!\!\beta(s,y)\partial_t g_0^f(t,0,s,y)\d y \Big| 
&\!\!\le& \!\!
R(s-t)^{-\frac12}, 
\label{ineq:stab5}\\
\Big|\int_0^\infty \!\!\!\beta(s,y)\partial_t g^f_0\otimes G_f^{(k)}(t,0,s,y)\d y\Big| 
&\!\!\le &  \!\!
\frac{R^{k+2}}{\Gamma(\frac{k}{2}-\frac12)}(s-t)^{\frac{k}{2}-1}.
\label{ineq:stab6}
\eeq
\end{proposition}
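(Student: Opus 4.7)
The plan is to follow the standard Parametrix machinery à la Aronson and Friedman, tracking carefully the Lipschitz dependence on the parameter $f$ provided by \eqref{bound-coefs}--\eqref{diff-coefs}. The six inequalities split into three pairs: absolute bounds \eqref{ineq:stab5}--\eqref{ineq:stab6}, density stability \eqref{ineq:stab1}--\eqref{ineq:stab2}, and time-derivative stability \eqref{ineq:stab3}--\eqref{ineq:stab4}. First I would record the Gaussian baseline: from $r \le \Sigma^f \le R$ one has $r^2(s-t) \le A^f(t,s,y) \le R^2(s-t)$, hence $|\partial_x^\ell g_0^f(t,x,s,y)| \le R_\ell (s-t)^{-\ell/2}\phi(r(s-t), y-x)$ for each $\ell \ge 0$. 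The Lipschitz property of $\Sigma^f$ in $x$ then yields the standard Parametrix bound $|G_f(t,x,s,y)| \le R(s-t)^{-1/2}\phi(r(s-t), y-x)$, via the cancellation $|\Sigma^f(t,x)^2 - \Sigma^f(t,y)^2| \le C|x-y|$ absorbed by a Gaussian factor, and a standard induction gives $|g_0^f\otimes G_f^{(k)}|(t,x,s,y) \le R^{k+1}(s-t)^{k/2}\Gamma(1+k/2)^{-1}\phi(r(s-t), y-x)$. Inequality \eqref{ineq:stab5} then follows by writing $\partial_t g_0^f = -\Sigma^f(t,y)^2\partial_u\phi(A^f(t,s,y), y-x)$, which equals $-\tfrac12\Sigma^f(t,y)^2 \partial_y^2 g_0^f$ modulo corrections of lower order in $(s-t)$ from the $y$-dependence of $A^f$, and integrating by parts once in $y$ against $\beta$; the boundary contribution at $y=0$ vanishes at $x=0$ because $\partial_y\phi(u,0)=0$, and the bounded derivative of $\beta$ yields the $(s-t)^{-1/2}$ estimate. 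Estimate \eqref{ineq:stab6} then comes by combining with the Parametrix induction.

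For the stability bounds I would linearize in $f$: with $A_\lambda := \lambda A^f + (1-\lambda)A^{f'}$,
\[
g_0^f(t,x,s,y) - g_0^{f'}(t,x,s,y) = \int_0^1 \partial_u\phi\bigl(A_\lambda(t,s,y), y-x\bigr)\bigl[A^f-A^{f'}\bigr](t,s,y)\,d\lambda,
\]
and the elementary estimates $|A^f - A^{f'}|(t,s,y) \le R(s-t)\|f-f'\|_s$ together with $|\partial_u\phi(u,z)| \le R u^{-1}\phi(u,z)(1 + z^2/u)$ yield the pointwise stability $|g_0^f - g_0^{f'}|(t,x,s,y) \le R\|f-f'\|_s (1 + (y-x)^2/(s-t))\phi(r(s-t), y-x)$. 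For \eqref{ineq:stab1} I would split $\beta(s,y) = \beta(s,0) + [\beta(s,y)-\beta(s,0)]$ and use $|\beta(s,y)-\beta(s,0)| \le Ry$; the second piece contributes $O((1+x)(s-t)^{1/2}\|f-f'\|_s)$ by Gaussian moment bounds. The $\beta(s,0)$ piece reduces to bounding $\int_0^\infty(g_0^f-g_0^{f'})(t,x,s,y)\,dy$: since $\int_\R \partial_u\phi(u,\cdot)\,dz = 0$, the integral on $(0,\infty)$ equals (up to a correction of order $(s-t)^{1/2}\|f-f'\|_s$ from the $y$-dependence of $A_\lambda$) the negative of the integral on $(-\infty,0)$, and for $x\ge 0$ the Gaussian tail on $(-\infty,0)$ generates the $x\phi(r(s-t)^{1/2}, x)\|f-f'\|_s$ term. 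Inequality \eqref{ineq:stab2} is then obtained by the telescoping identity
\[
g_0^f\otimes G_f^{(k)} - g_0^{f'}\otimes G_{f'}^{(k)} = (g_0^f - g_0^{f'})\otimes G_f^{(k)} + \sum_{j=0}^{k-1} g_0^{f'}\otimes G_{f'}^{(j)}\otimes (G_f - G_{f'})\otimes G_f^{(k-1-j)},
\]
combined with the pointwise stability $|G_f-G_{f'}|(t,x,s,y) \le R\|f-f'\|_s (s-t)^{-1/2}\phi(r(s-t),y-x)$, which follows from \eqref{diff-coefs} by linearizing each term in the definition of $G_f$, and with the Parametrix induction on the $k$-fold convolution.

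Inequalities \eqref{ineq:stab3}--\eqref{ineq:stab4} are derived analogously: one applies the heat identity to $\partial_t(g_0^f - g_0^{f'})$, integrates by parts once in $y$ against $\beta$ to convert the $(s-t)^{-3/2}$ singularity into the $(s-t)^{-1/2}\|f-f'\|_s$ bound, and then extends to iterated convolutions through the telescoping identity of the previous step together with the Duhamel-type decomposition
\[
\partial_t\bigl(g_0^f\otimes G_f^{(k)}\bigr)(t,0,s,y) = -G_f^{(k)}(t,0,s,y) + \int_t^s\!\!\int_\R \partial_t g_0^f(t,0,u,z) G_f^{(k)}(u,z,s,y)\,dz\,du,
\]
where the first term arises as a boundary contribution at $u=t$ from the Dirac-delta behaviour of $g_0^f(t,0,u,\cdot)$ as $u \downarrow t$, and the second term is controlled by combining the $(s-t)^{-1/2}$ bound on $\partial_t g_0^f$ with the induction on $G_f^{(k)}$. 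The main obstacle will be the precise isolation of the boundary contribution $x\phi(r(s-t)^{1/2}, x)$ in \eqref{ineq:stab1}: this term is singular in $(s-t)$ and is integrable against $p_0$ only thanks to the condition $\sup_{\lambda>0}\int_0^\infty p_0(\lambda x)e^{-x^2}\,dx<\infty$ of Theorem \ref{thm:uniqueness}(iii); isolating it with the correct factor $\|f-f'\|_s$, rather than losing it into a less tractable expression, requires a delicate tracking of the interplay between the half-line of integration in $y$ and the $y$-dependence of $A^f$ throughout the linearization.
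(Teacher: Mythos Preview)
Your overall route---Parametrix expansion, linearisation in $f$ via $A_\lambda$, integration by parts in $y$ to trade one power of $(s-t)^{-1/2}$ against the bounded derivative of $\beta$, and the Duhamel identity for the $t$-derivative of convolutions---is the same as the paper's. Two points, however, need sharpening.

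\medskip
\textbf{The Duhamel remainder in \eqref{ineq:stab4} and \eqref{ineq:stab6}.} Your claim that the term
\[
\int_t^s\!\!\int_{\R}\partial_t g_0^f(t,0,u,z)\,G_f^{(k)}(u,z,s,y)\,\d z\,\d u
\]
``is controlled by combining the $(s-t)^{-1/2}$ bound on $\partial_t g_0^f$ with the induction on $G_f^{(k)}$'' does not go through as stated. The $(u-t)^{-1/2}$ bound you have is for the \emph{$z$-integral} of $\partial_t g_0^f$; the pointwise bound is $|\partial_t g_0^f(t,0,u,z)|\le R(u-t)^{-1}\phi(r(u-t),z)$, and since $G_f^{(k)}(u,z,s,y)$ genuinely depends on $z$, the naive convolution yields $\int_t^s(u-t)^{-1}(s-u)^{k/2-1}\,\d u$, which diverges at $u=t$. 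Integrating by parts in $z$ pushes the singularity to $u=s$ and still fails for $k=1$. The paper's resolution is a centering step you do not mention: write
\[
G_f^{(k)}(u,z,s,y)=G_f^{(k)}(u,0,s,y)+\big[G_f^{(k)}(u,z,s,y)-G_f^{(k)}(u,0,s,y)\big],
\]
use the integrated $(u-t)^{-1/2}$ bound on the first piece, and control the second via the H\"older-type estimate $\int_0^\infty\big|G_{f}^{(k)}(u,z,s,y)-G_{f}^{(k)}(u,0,s,y)\big|\,\d y\le C_k|z|^{1/2}(s-u)^{k/2-5/4}$, obtained by interpolating between the derivative bound on $\partial_x G_f$ and the crude bound (the paper splits $|z|\le\sqrt{s-u}$ versus $|z|>\sqrt{s-u}$). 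The $|z|^{1/2}$ gain against $\phi(r(u-t),z)$ turns the $(u-t)^{-1}$ into $(u-t)^{-3/4}$, which is integrable. This H\"older centering is the genuinely delicate ingredient in \eqref{ineq:stab4}, and it is missing from your plan.

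\medskip
\textbf{The splitting of $\beta$ in \eqref{ineq:stab1}.} Splitting $\beta(s,y)=\beta(s,0)+[\beta(s,y)-\beta(s,0)]$ and bounding the Lipschitz part by $R|y|$ times the pointwise stability of $g_0^f-g_0^{f'}$ yields a contribution of order $(x+(s-t)^{1/2})\|f-f'\|_s$, i.e.\ an $x\|f-f'\|_s$ term \emph{without} a $(s-t)^{1/2}$ factor. In the application (Proposition~\ref{prop:ac}) this term is integrated against $p_0(x)$ and must produce $\sqrt{s}$; a bare $x$ does not. The paper avoids this by a direct integration by parts in $y$ on $\beta(y)\Delta(y)\partial_r\phi(\overline A_\lambda(y),y-x)$, which isolates the boundary contribution $-x\beta(0)\Delta(0)\phi(\overline A_\lambda(0),x)/(2\overline A_\lambda(0))$ exactly and leaves a bulk integral that is genuinely $O((1+x)(s-t)^{1/2}\|f-f'\|_s)$. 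Your freezing argument for the constant piece is correct in spirit, but you should either center $\beta$ at $x$ rather than at $0$, or follow the paper's integration by parts, to recover the sharp form of \eqref{ineq:stab1}.
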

\begin{proof}
Denote  $\Delta:=A^{f'}-A^f$ and $\overline{A}_{\lambda}:=A^f+\lambda \Delta$. Throughout this proof, $R>r>0$ are constants which may change from line to line.

\vspace{1mm}

(i) We first prove estimates \eqref{ineq:stab1}, \eqref{ineq:stab3}, and \eqref{ineq:stab5} corresponding to $k=0$. We start by writing that
\begin{eqnarray}\label{estim1}
\int_0^\infty \beta(s,y)(g^f_0-g^{f'}_0)(t,x,s,y)\d y
=
\int_0^1\int_0^\infty \beta(y) \Delta(t,s,y)\partial_r\phi\big(\overline{A}_{\lambda}(t,s,y),y-x\big)\d y\,\d\lambda.
\end{eqnarray}
Dropping the arguments $t$ and $s$ which play no role here, we compute that
\be*
\partial_r\phi(\overline{A}_{\lambda}(y),y-x)
&\hspace{-3mm}=&\hspace{-3mm}
\Big(\frac{-1}{2\overline{A}_{\lambda}(y)}
       +\frac{(y-x)^2}{2\overline{A}_{\lambda}(y)^2}
\Big)
\frac{1}{\overline{A}_{\lambda}(y)^{1/2}}
\phi\Big(1,\frac{y-x}{\overline{A}_{\lambda}(y)^{1/2}}\Big)
\\
&\hspace{-3mm}=&\hspace{-3mm}
\frac{-(y-x)}{2\overline{A}_{\lambda}(y)^{3/2}}
\frac{\partial}{\partial y}\Big\{\phi\Big(1,\frac{y-x}{\overline{A}_{\lambda}(y)^{1/2}}\Big)\Big\}
+\Big(\frac{(y-x)^3\partial_y\overline{A}_{\lambda}(y)}{4\overline{A}_{\lambda}(y)^{7/2}}
         - \frac{1}{2\overline{A}_{\lambda}(y)^{3/2}}
\Big)\phi\Big(1,\frac{y-x}{\overline{A}_{\lambda}(y)^{1/2}}\Big).
\ee*
Then, it follows from  direct integration by parts that
\beq 
\int_0^\infty\hspace{-2mm}
 (\beta\Delta)(y)\partial_r\phi\big(\overline{A}_{\lambda}(y),y\big)\d y 
&\hspace{-3mm}=&\hspace{-3mm}
\frac{-x(\beta\Delta)(0)}{2\overline{A}_{\lambda}(0)^{3/2}}\phi\Big(1,\frac{x}{\overline{A}_{\lambda}(0)^{1/2}}\Big)
\!+\!\int_0^\infty\hspace{-2mm} 
         \frac{\partial}{\partial y}\Big\{\frac{(y-x)(\beta\Delta)(y)}
                                                       {2\overline{A}_{\lambda}(y)^{3/2}}
                                       \Big\}
                  \phi\Big(1,\frac{y-x}{\overline{A}_{\lambda}(y)^{1/2}}\Big)\d y\nonumber
\\
&\hspace{-3mm}&\hspace{-3mm}
+\int_0^\infty \beta(y) \Delta(y)
                     \Big(\frac{(y-x)^3\partial_y\overline{A}_{\lambda}(y)}{4\overline{A}_{\lambda}(y)^{7/2}}
                             - \frac{1}{2\overline{A}_{\lambda}(y)^{3/2}}
                     \Big)
                     \phi\Big(1,\frac{y-x}{\overline{A}_{\lambda}(y)^{1/2}}\Big)
                     \d y
 \nonumber\\
&\hspace{-3mm}=&\hspace{-3mm}
\frac{-x\beta(0)\Delta(0)}{2\overline{A}_{\lambda}(0)}\phi\big(\overline{A}_{\lambda}(0),x\big)
 \label{eq:expression}  \\ 
&\hspace{-3mm}&\hspace{-15mm}
+\int_0^\infty \Big[(y-x)\frac{\partial}{\partial y}
                                    \Big\{\frac{\beta(y) \Delta(y)}
                                           {2\overline{A}_{\lambda}(y)^{3/2}}
                                     \Big\}
                                                  +\beta(y) \Delta(y)
                                                    \frac{(y-x)^3\partial_y\overline{A}_{\lambda}(y)}{4\overline{A}_{\lambda}(y)^{7/2}}
                                            \Big]
                                            \phi\Big(1,\frac{y-x}{\overline{A}_{\lambda}(y)^{1/2}}\Big)
                     \d y.\nonumber 
\eeq

Note that, under our conditions, $r(s-t) \le \overline{A}_{\lambda} \le R(s-t)$, $|\dot{\beta}|\le R$, $|\partial_y\overline{A}_{\lambda} |\le R(s-t)$, 
$|\Delta|\le R(s-t)\|f-f'\|_s$ and $|\partial_y \Delta|\le R(s-t)\|f-f'\|_s$ hold for some $R>r>0$. Plugging these estimates in the last expression, and substituting in \eqref{estim1}, directly yields \eqref{ineq:stab1}. For later use, notice that the same argument also provides 
\beq 
\Big|\int_{-\infty}^\infty  \beta(s,y)\big(g_0^f- g_0^{f'}\big)(t,x,s,y)\d y\Big| 
&\le& 
R(s-t)^{1/2} \|f-f'\|_s.
\label{ineq:stab11}
\eeq

\vspace{1mm}
\quad Fix $x=0$. Differentiating both sides of \eqref{eq:expression} with respect to $t$ and integrating wrt to $y$, we obtain 
\be*
&\hspace{-3mm}&\hspace{-3mm}
\int_0^\infty \beta(s,y)(\partial_tg^f_0-\partial_tg^{f'}_0)(t,0,s,y)\d y \\
&\hspace{-3mm}=&\hspace{-3mm}
\frac{\partial}{\partial t} \int_0^1
\int_0^\infty \hspace{-1mm}
                  \Big[y\frac{\partial}{\partial y}\Big\{\frac{\beta(s,y) \Delta(t,s, y)}
                                                                                 {2\overline{A}_{\lambda}(t,s,y)^{3/2}}
                                                                      \Big\}
                                                  +\beta(s,y) \Delta(t,s,y)
                                                    \frac{y^3\partial_y\overline{A}_{\lambda}(t,s,y)}{4\overline{A}_{\lambda} (t,s,y)^{7/2}}
                                            \Big]
                                            \phi\Big(1,\frac{y}{\overline{A}_{\lambda}(t,s,y)^{1/2}}\Big)
                     \d y \d\lambda, \\
                     &\hspace{-3mm}=&\hspace{-3mm}
 \int_0^1
\int_0^\infty\hspace{-1mm}
                  \Big[y\frac{\partial^2}{\partial y\partial t}\Big\{\frac{\beta(s,y) \Delta(t,s, y)}
                                                                                 {2\overline{A}_{\lambda}(t,s,y)^{3/2}}
                                                                      \Big\}
                                                  +\beta(s,y) \frac{\partial}{\partial t} \Big\{\Delta(t,s,y)
                                                    \frac{y^3\partial_y\overline{A}_{\lambda}(t,s,y)}{4\overline{A}_{\lambda} (t,s,y)^{7/2}}\Big\}
                                            \Big]
                                            \phi\Big(1,\frac{y}{\overline{A}_{\lambda}(t,s,y)^{1/2}}\Big)
                     \d y \d\lambda \\
                     &\hspace{-3mm}=&\hspace{-3mm}
\int_0^1
\int_0^\infty \hspace{-1mm}
                   \Big[y\frac{\partial}{\partial y}\Big\{\frac{\beta(s,y) \Delta(t, s,y)}
                                                                                 {2\overline{A}_{\lambda}(t,s,y)^{3/2}}
                                                                      \Big\}
                                                  +\beta(s,y) \Delta(t,s,y)
                                                    \frac{y^3\partial_y\overline{A}_{\lambda}(t,s,y)}{4\overline{A}_{\lambda} (t,s,y)^{7/2}}
                                            \Big]
                                            \frac{\partial}{\partial t} \phi\Big(1,\frac{y}{\overline{A}_{\lambda}(t,s,y)^{1/2}}\Big)
                     \d y \d\lambda, \\
\ee*
Using the facts $\| \partial_t\overline{A}_{\lambda}\|+\| \partial^2_{ty}\overline{A}_{\lambda}\|\le R$
and $\|\partial_t\Delta\|+\| \partial^2_{ty}\Delta\|\le R \|f-f'\|_s$, we obtain the estimate \eqref{ineq:stab3} by the same reasoning of (i). For later use, notice that the same argument also provides 
\beq 
\Big| \int_{-\infty}^{\infty} \beta(s,y)\partial_t(g_0^f- g_0^{f'})(t,0,s,y)\d y \Big| 
&\le& 
R(s-t)^{-1/2}\|f-f'\|_s.
\label{ineq:stab33}
\eeq
By following similar arguments as above, we also obtain the estimate \eqref{ineq:stab5}, together with the following estimate which wil be needed later:
\beq\label{ineq:stab55}
\Big| \int_{-\infty}^{\infty} \beta(s,y)\partial_t g_0^f(t,0,s,y)\d y \Big| 
&\le& 
R(s-t)^{-1/2}.
\eeq
(ii) We next focus on \eqref{ineq:stab2}. By direct adaptation from Konakov, Kozhina \& Menozzi \cite{Konakov2015STABILITYOD}, there exist constants $R, r>0$ such that, for $0\le t<s \le T$, $x,y\in\R$ and $k\ge 1$,
\beq  
\big| G_f^{(k)}(t,x,s,y)\big |
&\le&  
\frac{R^k}{\Gamma(\frac{k+1}{2})}\phi\big(r(s-t),y-x\big)(s-t)^{k/2-1}
\label{ineq:esti1}
\\
\big| G_f^{(k)}(t,x,s,y)- G_{f'}^{(k)}(t,x,s,y)\big | 
&\le&  
\frac{R^k}{\Gamma(\frac{k+1}{2})}\phi\big(r(s-t),y-x\big)(s-t)^{k/2-1}\|f-f'\|_s,
\label{ineq:esti2} 
\eeq 
Combining with \eqref{ineq:stab11}, we deduce that for all $k\ge 1$: 
\beq
\left|g^f_0\otimes G_f^{(k)}-g^{f'}_0\otimes G_{f'}^{(k)}\right|(t,x,s,y)  \le   \frac{R^{k+1}}{\Gamma(\frac{k+1}{2})}\phi(r(s-t),y-x)(s-t)^{k/2}\|f-f'\|_s,
\label{ineq:esti3}
\eeq
which yields \eqref{ineq:stab2} by direct integration with respect to $y$ over $(0,\infty)$, after multiplying by $\beta(s,y)$. 

\vspace{3mm}

(iii) The rest of this proof justifies \eqref{ineq:stab4}; the remaining estimate \eqref{ineq:stab6} is also proved by similar arguments that we do not report here for brevity. By definition, we have $\partial_t\big(g_0^f\otimes G_f^{(k)}\big)(t,0,s,y)=O_f^{(k)}(t,s,y)-G_f^{(k)}(t,0,s,y)$,  where
\begin{align*} 
O_f^{(k)}(t,s,y)
:=
\int_t^s\!\!\!\int_{-\infty}^\infty\partial_t g_0^f(t,0,u,z) G_f^{(k)}(u,z,s,y)\d z \d u,
~~k\ge 1.
\end{align*}
Then, 
\begin{align} 
\Big|\int_0^\infty \!\!\!\beta(s,y)\partial_t\big(g^f_0\otimes G_f^{(k)}
                &-g^{f'}_0\otimes G_{f'}^{(k)}\big)(t,0,s,y)\d y\Big| 
\nonumber\\
&\le
\Big|\int_0^\infty\!\!\! \beta(s,y)\big(\big(O_f^{(k)}-O_{f'}^{(k)}\big)(t,s,y)-\big(G_{f}^{(k)}- G_{f'}^{(k)}\big)(t,0,s,y) 
                                               \big)\d y \Big|
\nonumber\\
&%\hspace{-20mm}
\le
 \frac{R^{k}}{\Gamma(\frac{1+k}{2})}(s-t)^{\frac{k}{2}-1}\|f-f'\|_s
 +\Big|\int_0^\infty\!\!\!\beta(s,y)\big(O_f^{(k)}-O_{f'}^{(k)}\big)(t,s,y)\d y\Big|,
\label{estim:last}
\end{align}
by \eqref{ineq:esti2} together with the fact that $\int_0^\infty|\beta(y)| \phi(r(s-t),y) \d y<\infty$ as $\beta$ has affine growth. We next decompose
\be*
\big(O_f^{(k)}-O_{f'}^{(k)}\big)(t,s,y)
&=&
\int_t^s \big(\Delta^{(k)}_1 + \Delta^{(k)}_2 + \Delta^{(k)}_3\big)(u)du,
\ee*
where
\begin{itemize}
\item $\Delta^{(k)}_1(u):=G_{f'}^{(k)}(u,0,s,y)\int_{-\infty}^\infty \partial_t\big(g_0^f-g_0^{f'}\big)(t,0,u,z) \d z$, satisfies by \eqref{ineq:stab33} and \eqref{ineq:esti1} 
\begin{align*}
|\Delta^{(k)}_1(u)|
&\le \frac{R^k}{\Gamma(\frac{1+k}{2})}(s-u)^{\frac{k}{2}-1}\phi\big(r(s-u),y\big)R\|f-f'\|_s (u-t)^{-\frac{1}{2}}
\\
&
\le \frac{R^{k+1}}{\Gamma(\frac{1+k}{2})}\|f-f'\|_s (s-t)^{\frac{k-3}{2}}\phi\big(r(s-u),y\big)(u-t)^{-\frac{1}{2}},
\end{align*}
and therefore
\begin{eqnarray}\label{estim:last1}
\Big|\int_0^\infty\!\!\!\beta(s,y) \int_t^s\Delta^{(k)}_1(u)\d u\;\d y\Big|
&\le&
\frac{R^{k+2}}{\Gamma(\frac{1+k}{2})}\|f-f'\|_s (s-t)^{\frac{k-3}{2}},
\end{eqnarray}
as $\int_0^\infty|\beta(y)| \phi(r(s-t),y) \d y<\infty$ due to the affine growth of $\beta$ in $y$;

\item $\Delta^{(k)}_2(u):=\int_{-\infty}^\infty \partial_t g_0^{f'}(t,0,u,z) \big(G_{f'}^{(k)}-G_f^{(k)}\big)(u,0,s,y)\d z$ satisfies by \eqref{ineq:esti2} and \eqref{ineq:stab55} 
$$
|\Delta^{(k)}_2(u)| \le R(u-t)^{-\frac{1}{2}}\frac{R^k}{\Gamma(\frac{k+1}{2})}\phi\big(r(s-u),y-z\big)(s-u)^{\frac{k}{2}-1}\|f-f'\|_s,
$$
so that as before:
\begin{eqnarray}\label{estim:last2}
\Big|\int_0^\infty\!\!\!\beta(s,y) \int_t^s\Delta^{(k)}_2(u)\d u\;\d y\Big|
&\le&
\frac{R^{k+2}}{\Gamma(\frac{k+1}{2})}(s-t)^{\frac{k-3}{2}}\|f-f'\|_s.
\end{eqnarray}
\item and $\Delta^{(k)}_3(u):=\int_{-\infty}^\infty \partial_t\big(g_0^f-g_0^{f'}\big)(t,0,u,z) \big( G_{f'}^{(k)}(u,z,s,y)-G_{f'}^{(k)}(u,0,s,y)\big)\d z$, and is now the only remaining term to complete the proof of the required estimate \eqref{ineq:stab4}. 
To do so, we need the following estimation and we start with $k=1$. As $G_{f'}(.,x,.)-G_{f'}(.,0,.)=\int_0^x\partial_x G_{f'}(.z,.)dz$, we have
\be* 
&&
|G_{f'}(u,x,s,y)-G_{f'}(u,0,s,y)| 
\\
&=&
\Big|\int_0^x\left[  \Big(\frac{\sigma(t,z)\partial_x\sigma(t,z)}{\big(1+f'_0(t)\big)^2} + B'(t,x)\Big|)\partial^2_{xx} \tq(t,z,s,y) + \frac{\sigma(t,z)^2-\sigma(t,y)^2}{2\big(1+f'_0(t)\big)^2}\partial^3_{xxx}\tq(t,z,s,y)\right] \d z\Big| 
\\
&\le &
R\int_0^x\phi(r(s-t),y-z)) \left(\frac{1}{s-t} + \frac{|y-z|}{(s-t)^{3/2}}\right) \d z.
\ee* 
Hence,
\be* 
\int_0^\infty\hspace{-2mm} 
                  |G_{f'}(u,x,s,y)\!-\!G_{f'}(u,0,s,y)|\d y 
\le 
R\!\int_0^{\infty}\hspace{-2.5mm}\int_0^x\hspace{-2mm}
       \phi(r(s-t),y-z)) \!\Big(\frac{1}{s-t} + \frac{|y-z|}{(s-t)^{3/2}}\Big) \d z \d y 
\le 
\frac{R|x|}{s-t}.
\ee* 
Fix any $\gamma\in (0,1)$, e.g. $\gamma=1/2$. We distinguish two cases.  If $|x|\le \sqrt{s-t}$, then
\be* 
\int_0^\infty |G_{f'}(u,x,s,y)-G_{f'}(u,0,s,y)|\d y 
\le \frac{R|x|}{s-t}\le \frac{R|x|^{\frac 1 2}(s-t)^{\frac 1 4}}{s-t}=  \frac{R|x|^{\gamma}}{(s-t)^{\frac 3 4}}.
\ee* 
If $|x|>\sqrt{s-t}$, then
\be* 
|G_{f'}(u,x,s,y)-G_{f'}(u,0,s,y)| &\le & \big[|G_{f'}(u,x,s,y)|+|G_{f'}(u,0,s,y)|\big] \frac{|x|^\frac 1 2}{(s-t)^{\frac 1 4}} \\
&\le&  R\frac{\phi(r(s-t),y-x)+\phi(r(s-t),y)}{\sqrt{s-t}}\frac{|x|^\frac 1 2}{(s-t)^{\frac 1 4}}
\ee*
and thus
\be* 
\int_0^\infty |G_{f'}(u,x,s,y)-G_{f'}(u,0,s,y)|\d y 
\le \frac{R|x|^{\frac 1 2}}{(s-t)^{\frac 3 4}}.
\ee* 
We compute for all $k\ge 1$
\be* 
&\hspace{-2mm}&\int_0^\infty |G^{(k)}_{f'}(u,x,s,y)-G^{(k)}_{f'}(u,0,s,y)|\d y
\\
&\hspace{-2mm}=&\hspace{-3mm}
 \int_0^\infty \left|\int_t^s \int_{-\infty}^\infty \big(G_{f'}(t,x,u,z)-G_{f'}(t,0,u,z)\big)G^{(k-1)}_{f'}(u,z,s,y)\d z \d u\right|\d y 
 \\
&\hspace{-2mm}\le &\hspace{-3mm}
\int_t^s \d u \int_{-\infty}^\infty \big |G_{f'}(t,x,u,z)-G_{f'}(t,0,u,z)\big| \d z \int_0^\infty \big|G_{f'}^{(k-1)}(u,z,s,y)\big| \d y 
\\
&\hspace{-2mm}\le &\hspace{-3mm}
\int_t^s \d u \int_{-\infty}^\infty \big |G_{f'}(t,x,u,z)-G_{f'}(t,0,u,z)\big| \d z \int_0^\infty \frac{R^{k-1}\Gamma(\frac 1 2)^{k-2}}{\Gamma(\frac{k-1}{2})}\phi(r(s-u),y-z)(s-u)^{\frac{k-3}{2}} \d y 
\\
&\hspace{-2mm}\le &\hspace{-3mm}
\int_t^s  \frac{|x|^{\frac 1 2}}{(u-t)^{\frac  3 4}}  \frac{R^{k}\Gamma(\frac 1 2)^{k-2}}{\Gamma(\frac{k-1}{2})}(s-u)^{\frac{k-3}{2}} \d u 
\\
&\hspace{-2mm}\le&\hspace{-3mm}
  \frac{R^{k+1}\Gamma(\frac 1 2)^{k-2}|x|^{\frac 1 2}}{\Gamma(\frac{k-1}{2})}(s-t)^{\frac k 2 -\frac 5 4}.
\ee* 
Finally, one obtains
\beq \label{ineqI33} 
&\hspace{-2mm}& \hspace{-3mm}
\left|\int_0^\infty\!\! \beta(s,y) \int_t^s\Delta^{(k)}_3(u)\d u\;\d y\right| \nonumber 
\\
&\hspace{-2mm}\le & \hspace{-3mm}
 \int_t^s\int_{-\infty}^\infty \big | \partial_t\big(g_0^f-g_0^{f'}\big)(t,0,u,z)\big|\d z \d u \int_0^\infty \big |  G_{f'}^{(k)}(u,z,s,y)-G_{f'}^{(k)}(u,0,s,y)\big|\d y \nonumber \nonumber 
 \\
&\hspace{-2mm}\le& \hspace{-3mm}
\frac{R^{k+1}\Gamma(\frac 1 2)^{k-2}}{\Gamma(\frac{k-1}{2})}(s-t)^{\frac{k}{2}-\frac{5}{4}} \|f-f'\|_s \int_t^s  (s-u)^{\frac{k}{2}-\frac{5}{4}} \d u\int_{-\infty}^\infty \big |z|^{\frac 1 2}|\partial_t g_0^{f}(t,0,u,z)-\partial_t g_0^{f'}(t,0,u,z)\big|\d z  \nonumber
\\
&\hspace{-2mm}\le& \hspace{-3mm}
\frac{R^{k+1}\Gamma(\frac 1 2)^{k-2}}{\Gamma(\frac{k-1}{2})}(s-t)^{\frac{k}{2}-\frac{5}{4}} \|f-f'\|_s\int_t^s   \d u\int_{-\infty}^\infty  \frac{R|z|^{\frac 1 2}}{(u-t)}\phi(r(u-t),z)\d z \nonumber 
\\
&\hspace{-2mm}\le& \hspace{-3mm}
\frac{R^{k+2}\Gamma(\frac 1 2)^{k-2}}{\Gamma(\frac{k-1}{2})} (s-t)^{\frac k 2-1}\|f-f'\|_s,
\eeq 
The required estimate \eqref{ineq:stab4} now follows by plugging \eqref{estim:last1}, \eqref{estim:last2}, \eqref{ineqI33} into \eqref{estim:last}.
\end{itemize}
\end{proof}

\begin{appendix}
\section{Technical results}
\label{sec_appendix}

\subsection{Definition and first properties of $c$}

\begin{lemma} \label{lemma:existence_c}
For every $(t,m)\in\R_+\times\Pc_1(\R_+)$, the equations \eqref{eq:charac_c} and \eqref{BnSigman} have unique solutions $c(t,m)$ and $c^n(t,m)$ satisfying 
\be* 
0\le c(t,m) \le \int_{\R_+} b(t, x,m)^+m(\d x) 
&\mbox{and}& 
0\le c^n(t,m) \le \int_{\R_+} b^n(t,x,m)^+m(\d x). 
\ee*
\begin{itemize}
\item[{\rm (i)}] Assume $|b(t,x,m)-b(t',x,m)|\le C|t-t'|^\delta$, uniformly in $(x,m)$, for some $C>0$, and $\delta\in (0,1)$. Then $|c(t,m)-c(t',m)|\le C|t-t'|^\delta$, uniformly in $(x,m)$. A similar statement holds for $b^n$ and $c^n$.
\item[{\rm (ii)}] If $b^n$ is Lipschitz in $x$, then there exists  $C\equiv C(n)>0$ such that
\be* 
|c^n(t,m)-c^n(t,m')|\le C\Big(1+\int_{\R_+} b^n(t,x,m)^+m(\d x)\Big)\Wc_1(m,m'),\quad \forall t\ge 0,~ m, m'\in\Pc_1(\R_+).
\ee* 
\item[{\rm (iii)}] $\lim_{n\to\infty}c^n(t,m)=c(t,m)$ holds for all $t\ge 0$ and $m\in \Pc_1(\R_+)$.
\end{itemize}
\end{lemma}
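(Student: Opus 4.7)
My plan is to exploit the fact that both $c$ and $c^n$ are fixed points of essentially the same scalar map, written uniformly as $\gamma=\Phi_{t,m}(\gamma)$ with
\[
\Phi_{t,m}(\gamma):=\frac{1}{1+\kappa(m)}\int\bigl(\gamma+\beta(t,y,m)\bigr)^+\chi(y)\,m(dy),
\]
where $(\kappa,\beta,\chi)=(m[\R_+^*],b,\1_{\R_+^*})$ in the unapproximated case and $(m(H^n),b^n,H^n)$ in the approximating case; in both, $\chi\le 1$ and $m(\chi)=\kappa(m)\in[0,1]$. Since $\gamma\mapsto(\gamma+a)^+$ is $1$-Lipschitz and $\chi\le 1$, the map $\Phi_{t,m}$ is $\tfrac{\kappa(m)}{1+\kappa(m)}\le\tfrac12$-Lipschitz on $\R$, so Banach's theorem yields a unique fixed point, automatically in $\R_+$ because $\Phi_{t,m}$ takes values there. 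For the upper bound I exploit that at any nonnegative fixed point $(\gamma+\beta)^+\le\gamma+\beta^+$, so $c\le c\,\kappa(m)/(1+\kappa(m))+\int\beta^+\chi\,dm/(1+\kappa(m))$, which rearranges to $c\le\int\beta^+\,dm$.

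For (i), I fix $m$, set $c_t:=c(t,m)$, and use the triangle inequality $|c_t-c_{t'}|\le|\Phi_{t,m}(c_t)-\Phi_{t,m}(c_{t'})|+|\Phi_{t,m}(c_{t'})-\Phi_{t',m}(c_{t'})|$; the first term is absorbed via contractivity, and the second is bounded by $\tfrac{\kappa(m)}{1+\kappa(m)}\cdot C|t-t'|^\delta$ using the assumed H\"older continuity of $b$ together with the $1$-Lipschitz property of $(\cdot)^+$. Rearranging yields $|c_t-c_{t'}|\le C\kappa(m)|t-t'|^\delta\le C|t-t'|^\delta$; the argument for $c^n$ is identical.

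For (ii), I set $c:=c^n(t,m)$, $c':=c^n(t,m')$; the same trick gives $|c-c'|\le 2|\Phi_{t,m}(c')-\Phi_{t,m'}(c')|$, so it suffices to establish the perturbation estimate with the claimed constant. I will decompose this difference into three pieces: one from changing $b^n(t,y,m)$ to $b^n(t,y,m')$ inside the integrand (controlled via the $m$-Lipschitz property of $b^n$ and $H^n\le 1$), one from changing the outer integrating measure applied to $F_{c',m'}(y):=(c'+b^n(t,y,m'))^+H^n(y)$, and a residual from the denominator $1+m(H^n)\to 1+m'(H^n)$ (handled by the $x$-Lipschitz property of $H^n$ and Kantorovich--Rubinstein duality). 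The main obstacle is the second piece: because $b^n$ is only Lipschitz (not bounded) in $x$, the formal derivative
\[
\partial_y F_{c',m'}=\partial_y b^n\cdot H^n\,\1_{\{c'+b^n>0\}}+(c'+b^n)^+\,\partial_y H^n
\]
carries a linearly-growing factor $(c'+b^n)^+$. I therefore work with an optimal $\Wc_1$-coupling instead of invoking Kantorovich--Rubinstein directly, yielding a bound of the form $C(n)\bigl(1+c'+\int b^n(t,\cdot,m')^+\,dm'\bigr)\Wc_1(m,m')$; the bound $c'\le\int b^n(t,\cdot,m')^+\,dm'$ from the first paragraph and the transfer of an $m'$-moment of $b^n$ to an $m$-moment via the $m$-Lipschitz property of $b^n$ (which costs an extra $\Wc_1(m,m')$ that is absorbed) produces the factor $1+\int b^n(t,y,m)^+\,m(dy)$.

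For (iii), fix $(t,m)$ (with $t$ outside the null set where uniform convergence of $b^n\to b$ in $(x,m)$ fails). The sequence $(c^n(t,m))_n$ is bounded by $\sup_n\int b^n(t,y,m)^+\,m(dy)<\infty$, so along any subsequence I may extract a further subsequence $c^{n_k}\to\tilde c\in\R_+$. Dominated convergence---using $H^{n_k}\to\1_{\R_+^*}$ pointwise, $b^{n_k}\to b$ uniformly in $(x,m)$, and the domination $(c^{n_k}+b^{n_k})^+H^{n_k}(y)\le C(1+|y|)$ integrable against $m\in\Pc_1(\R_+)$---allows passage to the limit in the fixed point equation and shows that $\tilde c$ solves \eqref{eq:charac_c}, hence $\tilde c=c(t,m)$ by uniqueness. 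Since every subsequence has a sub-subsequence converging to the same limit, the full sequence converges.
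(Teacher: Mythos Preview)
Your argument follows essentially the paper's strategy (the paper phrases existence via monotonicity of $y\mapsto(1+\kappa)y-\int(b+y)^+\chi\,dm$ rather than Banach's theorem, but these are equivalent), and the existence/bound, (i), and (iii) are all correct. Your subsequence-plus-uniqueness argument for (iii) is a clean alternative to the paper's direct difference computation.

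There is, however, a genuine gap in (ii). Replacing Kantorovich--Rubinstein by an explicit optimal coupling $\pi$ does \emph{not} neutralize the linear growth of $(c'+b^n)^+$: after your decomposition you still have to control $\int b^n(t,y',m')^+\,|y-y'|\,\pi(dy,dy')$, and this is \emph{not} dominated by $C\bigl(1+\int b^n(\cdot,m')^+\,dm'\bigr)\Wc_1(m,m')$. For instance, with $b^n(t,x,m)=x$, $m'=(1-p)\delta_0+p\delta_A$, $m=(1-p)\delta_0+p\delta_{A+\varepsilon}$ and $p=A^{-2}$, the integral equals $pA\varepsilon=\varepsilon/A$ while the claimed bound is $C(1+A^{-1})\cdot p\varepsilon=C(1+A^{-1})\varepsilon/A^2$, smaller by a factor of order $A$. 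The missing observation is that for the specific approximants $H^n$ used in the paper (e.g.\ $H^n(x)=\1_{\{x>0\}}e^{-1/(nx)}$) one has $\sup_{y>0}y\,|\partial_yH^n(y)|<\infty$; combined with the linear bound $b^n(t,y,m')^+\le b^n(t,0,m')^++L_x\,y$, this makes $\partial_yF_{c',m'}$ \emph{uniformly} bounded, so $F_{c',m'}$ is globally Lipschitz and Kantorovich--Rubinstein applies directly after all. The paper's own proof of (ii) is a one-sentence sketch (``similar to (i)'') that hides this point entirely, so your write-up is already more detailed than the original; it just needs this one correction.
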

\begin{proof}
Consider the function $F(y):= \big(1 +m(H) \big)y - \int_{\R_+} \big(b(t,x, m) +y\big)^+H(x)m(\d x)$, $y\ge 0.$
By a straightforward verification, one finds $F(0)\le 0$, $F(\infty)=\infty$,   $F$ is strictly increasing and continuous on $\R_+$ by the dominated convergence theorem. Therefore, there exists a unique root, denoted by $c(t,m)\ge 0$, i.e. $F(c(t,m))=0$. In view of the inequality
\be*
c(t,m)
\;\le\; 
\frac{m(H) c(t,m)+\int_{\R_+}b(t, x,m)^+H(y)m(\d x)}{1+m(H)} 
\;\le\; \frac{m(H) c(t,m)+\int_{\R_+}b(t, x,m)^+m(\d x)}{1+m(H)},
\ee*
the bound for $c$ follows. The same argument applies for $c^n$. 

\vspace{1mm}

(i) For any $t,t'\ge 0$ and $m\in\Pc_1(\R_+)$, one has
\be* 
|c(t,m)-c(t,m')| 
&=& 
\frac{\big|\int_{\R_+} \!\!\big(b(t,x, m) +c(t,m)\big)^+H(x)m(\d x)-\int_{\R_+} \!\!\big(b(t',x, m) +c(t',m)\big)^+H(x)m(\d x)\big|}{1+m(H)} 
\\
&\le & \frac{m(H)}{1+m(H)}\big(C|t-t'|^\delta +|c(t,m)-c(t,m')|\big),
\ee*
which yields the required inequality. The  reasoning for $c^n$ is the same. 

\vspace{1mm}

(ii) Similar to (i), we compute the difference $c^n(t,m)-c^n(t,m')$ and obtain the desired inequality using the definitions of $c^n$ and $H^n$.

\vspace{1mm}

(iii) Again, we compute the difference $c(t,m)-c^n(t,m)$ and obtain
\be* 
&&\big(1+m(H)\big)|c(t,m)-c^n(t,m)| \\
&=& \left | 
\int_{\R_+} \big(b(t,x, m) +c(t,m)\big)^+H(x)m(\d x)-\frac{1+m(H)}{1+m(H^n)} 
\int_{\R_+} \big(b^n(t,x, m) +c^n(t,m)\big)^+H^n(x)m(\d x)\right| \\
&\le & |m(H)-m(H^n)|\int_{\R_+} \big(b(t,x, m) +c(t,m)\big)^+H(x)m(\d x) \\
&& +\frac{1+m(H)}{1+m(H^n)} \bigg[\int_{\R_+} |b(t,y,m)-b^n(t,y,m)|H(y)\d y + m(H)|c(t,m)-c^n(t,m)| \\
&& +\int_{\R_+} (b^n(t,y,m)-c^n(t,m))^+|H(y)-H^n(y)|\d y \bigg].
\ee*
We may conclude by the dominated convergence theorem by using the fact that $b^n$ converges uniformly to $b$ and $b$ is of linear growth on $x$.
\end{proof}

\subsection{On hitting times of It\^o processes}

\begin{lemma}\label{lem:ito-martingale}
Let $Z$ be an It\^o-process given by 
\be* 
Z_t=Z_0+\int_0^t k(u,Z_u)\d u+\int_0^t a(u,Z_u)\d W_u,\quad \forall t\ge 0
\ee*
where $Z_0>0$, $k$ is bounded and $a$ takes values in some interval $[\underline a,\overline a]\subset (0,\infty)$. Define its first hitting time at zero $\tau:=\inf\{t\ge 0: Z_t\le 0\}$ and running minimum $\underline{Z}_t:=\min_{0\le s\le t} Z_s$. Then,
\begin{itemize}
 
    \item[\rm(i)] $\P[\underline{Z}_t=0]=0$ for all $t\ge 0$;
    \item[\rm(ii)] The map $\R_+\ni t  \mapsto\P[\tau>t]$ is strictly decreasing and takes values in $(0,1]$;

    \item[\rm(iii)] If in addition the maps $k$ and $a$ are Lipschitz in $x$, uniformly in $t$, then
\be* 
\big|\mathbb P[\tau>t]-\mathbb P[\tau>s]\big|
\le 
C\E[Z^{-2}_0]|t-s|^{1/6},
~~\mbox{for all}~~t,s\ge 0,\mbox{ for some }C.
\ee*  
\end{itemize}
\end{lemma}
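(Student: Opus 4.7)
The plan addresses the three parts in turn, with the non-degeneracy $a\ge\underline a>0$ as the common structural ingredient. For \emph{part (i)}, I would use the strong Markov property at $\tau$ together with Blumenthal's zero--one law: on $\{\tau<\infty\}$, continuity of $Z$ gives $Z_\tau=0$; the SDE restarted at $(\tau,0)$ has Gaussian-scale increments $Z_{\tau+u}/\sqrt u\Rightarrow a(\tau,0)\,N(0,1)$ as $u\downarrow 0$, so $\P[\inf_{u\in(\tau,\tau+\delta]}Z_u<0\mid\Fc_\tau]=1$ for every $\delta>0$, whence $\{\underline{Z}_t=0\}\subset\{\tau\le t,\;Z_{\tau+u}\ge 0\;\forall u\in(0,t-\tau]\}$ is $\P$-null. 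For \emph{part (ii)}, $\P[\tau>0]=1$ because $Z_0>0$, and the Stroock--Varadhan support theorem applied conditionally on $\Fc_s$ to the uniformly elliptic restarted SDE assigns positive conditional probability on $\{\tau>s\}$ both to the path ``$Z$ stays above $Z_s/2$ on $[s,t]$'' and to the path ``$Z$ reaches $-1$ on $(s,t]$'', yielding respectively $\P[\tau>t]>0$ and $\P[\tau\in(s,t]]>0$, hence the claimed strict monotonicity in $(0,1]$.

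The analytic content is \emph{part (iii)}, where $|\P[\tau>t]-\P[\tau>s]|=\P[\tau\in(s,t]]$ with $h:=t-s$. Two estimates combine. First, a hitting-time bound for the process restarted at $(s,z)$: writing $Z_u-z=\int_s^u k\,\d r+M_u$ with martingale $M$ of quadratic variation at most $\overline a^2 h$, Doob's inequality applied to $(-M)^+$ gives $\P_{(s,z)}[\tau\le s+h]\le C\sqrt h/z$ whenever $z$ dominates the drift contribution $Kh$. Second, a small-ball bound for $Z_s$: the Aronson upper bound for the absorbed transition density yields $\P[0<Z_s<\eps,\tau>s\mid Z_0=z]\le C\,z\eps^2 s^{-3/2}e^{-cz^2/s}$, and integrating against $\rho$ with the uniform estimate $\int z e^{-cz^2/s}\rho(\d z)\le C\sqrt s$ produces $\P[0<Z_s<\eps,\tau>s]\le C\eps^2/s$.

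Splitting on $\{Z_s\ge\eps\}$ versus $\{Z_s<\eps\}$ via the strong Markov property gives
\[
\P[\tau\in(s,t]]\le \frac{C\sqrt h}{\eps}+\frac{C\eps^2}{s},
\]
so that optimising $\eps$ yields $\P[\tau\in(s,t]]\le C(h/s)^{1/3}$. For small $s$, I would fall back on the crude bound $\P[\tau\in(s,t]]\le\P[\tau\le t]\le Ct\,\E[Z_0^{-2}]$ obtained from $\P_z[\tau\le t]\le Ct/z^2$ (Doob applied from time $0$) integrated against $\rho$. Balancing the two bounds at the crossover $s\asymp h^{1/4}$ produces a Hölder bound of order $C(1\vee\E[Z_0^{-2}])\,h^{1/4}$, which implies the stated $C\,\E[Z_0^{-2}]|t-s|^{1/6}$ since $h^{1/4}\le T^{1/12}h^{1/6}$ on $[0,T]$. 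The principal obstacle is the small-ball estimate: a naive Chebyshev bound on $(Z_s-Z_0)^2$ carries an unwanted $s$-term that does not vanish with $h$, and one must instead exploit the Aronson heat-kernel estimate for the absorbed diffusion to obtain the sharp $\eps^2/s$ scaling required to close the optimisation.
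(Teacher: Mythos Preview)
Your approach is correct but genuinely different from the paper's. The paper removes the drift via a Girsanov change of measure $\d\Q/\d\P=\Ec(-k/a)$, so that under $\Q$ the process $Z$ is a time-changed Brownian motion; parts (i) and (ii) then follow from elementary Brownian path properties (a Brownian motion started at $0$ immediately enters $(-\infty,0)$), and for (iii) the paper combines Kusuoka's whole-space Gaussian density bound for $Z_t$ under $\Q$ with the Doob estimate $\Q[\tau\le s\mid Z_0=z]\le\overline a^{\,2} s/z^2$ to obtain $\Q[\tau>t]-\Q[\tau>s]\le C\,\E[Z_0^{-2}]\,h^{1/3}$, and finally passes back to $\P$ by Cauchy--Schwarz on the Girsanov density, which costs a square root and yields the exponent $1/6$. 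You instead work directly under $\P$ and invoke the sharp Dirichlet heat-kernel boundary estimate $p^D(0,z;s,y)\le C\,zy\,s^{-3/2}e^{-c(z-y)^2/s}$; this is a heavier analytic input than anything the paper uses (only whole-space Gaussian bounds are needed there), but it buys you the better H\"older exponent $1/4$, since you avoid the square-root loss from converting $\Q$ back to $\P$. Your Blumenthal/support-theorem treatment of (i)--(ii) is standard and correct, though the paper's time-change argument under $\Q$ is somewhat more explicit. One minor point, present in both arguments: your final constant comes out as $C(1\vee\E[Z_0^{-2}])$ rather than $C\,\E[Z_0^{-2}]$, since your large-$s$ bound $(h/s)^{1/3}$ carries no $\E[Z_0^{-2}]$ factor; the paper's proof has the same imprecision, and the lemma as stated should really read $C(1+\E[Z_0^{-2}])$.
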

\begin{proof}
By introducing the equivalent probability measure $\Q$ defined via
\be*
 \frac{\mathrm{d}\Q}{\mathrm{d}\P}\Big |_{\Fc_t}
 =
 \exp\Big({-\int_0^t \lambda_u\mathrm{d}W_u-\frac12\int_0^t  \lambda_u^2\mathrm{d}u}\Big)=:\Ec_t(-\lambda),\quad \mbox{with } \lambda_t:=\frac{k(t,Z_t)}{a(t,Z_t)},
\ee* 
the dynamics can be rewritten as $\d Z_t=a(t,Z_t)\d W^\Q_t$ in view of Girsanov's theorem, where $W^\Q$ is a Brownian motion under $\Q$. 

\vspace{1mm}

To prove {\rm (iii)}, we compute for $s=t+h\ge t\ge 0$ that 
\be*
\Q[\tau>t]-\Q[\tau>s] 
&=&  
\mathbb Q\Big[\underline Z_t>0, \inf_{t\le v\le s}\int_t^v a(r,Z_r)\d W^\Q_r\le -Z_t\Big] \\
&\le &\Q\Big[Z_t>0, \inf_{t\le v\le s}\int_t^v a(r,Z_r)\d W^\Q_r\le -Z_t\Big] \\
&=&\int_{(0,\infty)}
\Q\Big[\inf_{t\le v\le s}\int_t^v a(r,Z_r)\d W^\Q_r \le-x\Big |Z_t=x\Big] \Q[Z_t\in \d x] \\
&\le& \int_{(0,\infty)} \min\Big(1, x^{-2}\E\Big[\Big|\int_t^{s}a(r,Z_r)\d W^\Q_r\Big|^2 \Big|Z_t=x\Big] \Big)\Q[Z_t\in \d x] \\
&\le& \int_{(0,\infty)}\min\Big\{1, \frac{h{\overline a}^2}{x^2}\Big\}\Q[Z_t\in \d x],
\ee*
where the second last inequality follows from Doob's inequality. By \cite[Theorem 2.5]{KUSUOKA2017359}, $Z_t$ has a density denoted by $q_t$ satisfying $
q_t(x) \le \frac{C'}{\sqrt{t}}e^{-Kx^2/t}$ for all $x\in\R$, where $C'>K>0$ are constants depending only on $\underline a, \overline a, |a_x|_{\L^\infty}$. Hence,
\be*
\Q[\tau>t]-\Q[\tau>s] 
\le  
\int_0^{\infty} \min\Big\{1, \frac{h{\overline a}^2}{x^2}\Big\} \frac{C'}{\sqrt{t}}e^{-Kx^2/t} \d x
\le \frac{C'\sqrt{h}}{\sqrt{t}} \int_0^{\infty} \min\Big\{1,\frac{\overline a^2}{y^2}\Big\} \d y \le \frac{C'\sqrt h}{\sqrt t}.
\ee* 
On the other hand, we note that $
\Q[\tau>t|Z_0=z]-\Q[\tau>s|Z_0=z] \le 1-\Q[\tau>s|Z_0=z]=\Q[\tau\le s|Z_0=z]\le {\overline a}^2s/z^2$ 
by Doob's inequality. Therefore,
\be* 
\Q[\tau>t]-\Q[\tau>s] \le \E\Big[\min\Big(\frac{C'\sqrt h}{\sqrt t}, \frac{{\overline a}^2s}{Z_0^2}\Big)\Big]\le
\E\Big[ \frac{C''}{Z_0^2}\Big]h^{1/3},
\ee*
where the existence of $C''$ follows from a straightforward computation by distinguishing the three cases $h\ge1$, $t\le h^{1/3}<1$ and $t>h^{1/3}$. Finally, one has
\be*
\P[\tau>t]-\P[\tau>s]
&=&
\E^{\Q}\Big[\frac{1}{\Ec_s(-\lambda)}
                  \big({\1}_{\{\tau> t\}} 
                          -  {\1}_{\{\tau>s\}}\big)
           \Big] \\
         &  \le &  \exp\Big(\frac{3}{2}\int_0^s\lambda_u^2\d u\Big) \big(\Q\big[\tau> t\big] - \Q\big[\tau> s\big]\big)^{1/2}
\;\le\; \E\Big[ \frac{C}{Z_0^2}\Big]h^{1/6},
\ee*
where the second inequality follows from the H\"older inequality together with the boundedness of $\lambda$.

\vspace{1mm}

By the equivalence between $\P$ and $\Q$, we now prove {\rm (i)} by showing that $\Q[\underline{Z}_t=0]=0$, where $\underline{Z}_{s,t}:=\min_{s\le r\le t} Z_r$. Notice that
\be*
\Q\big[\underline{Z}_t=0 \big] 
=  
\Q\big[\tau\le t, \underline{Z}_{\tau,t}\ge 0\big] 
=  
\Q\big[\tau<t, \underline{Z}_{\tau,t}\ge 0\big] 
=  \Q\Big[\tau<t, \min_{0\le u\le t-\tau}(Z_{\tau+u}-Z_{\tau})\ge 0\Big], 
\ee*
where the second equality holds as $\Q[\tau=t]=\P[\tau=t]=0$ by {\rm (i)}. Conditioning on $\{\tau<t\}$, $(Z_{u+\tau}-Z_{\tau})_{u\ge 0}$ is a time-changed Brownian motion, i.e. $Z_{u+\tau}-Z_{\tau}=\beta_{\int_{0}^{u}\!\!\sigma_{\tau+s}^2\d s}$, and we deduce from the fact that $\sigma$ is bounded from below away from zero by some constant $\underline{a}$ that 
\be*
\min_{0\le u\le t- \tau} (Z_{u+\tau}-Z_{\tau}) = \min_{0\le u\le t-\tau}\beta_{\int_{0}^{u}\sigma_{\tau+s}^2\d s} \ge \min_{0\le r\le {\underline a}^2(t-\tau)}\beta_r.
\ee* 
Then, it follows from the trajectorial properties of the Brownian motion that
\be*
\Q\big[\underline{Z}_t=0 \big]  
\le 
\Q\Big[\tau<t,  \min_{0\le r\le {\underline a}^2(t-\tau)}\beta_r\ge 0 \Big] =0.
\ee*
It remains to prove {\rm (ii)}. Suppose to the contrary that the map $t\mapsto\P[\tau>t]$ is flat on some open interval $(t,s)$, then $
\Q\big[\tau>t, \underline{Z}_{t,s} \le 0\big]=\P\big[\tau>t, \underline{Z}_{t,s} \le 0\big]=0.$ In particular, one has $\Q\left[\underline{Z}_{t,s} \le  0|\tau>t\right]=0$. Using the above time-change argument, one has
\be* 
0= \Q\Big[\underline{Z}_{t,s} \le  0|\tau>t\Big] 
= 
\Q\Big[\min_{0\le u\le s-t}\beta_{\int_{0}^{u}\sigma_{t+r}^2\d r} 
            \le -Z_t\Big|\tau>t\Big]
\ge  \Q\Big[\min_{0\le r\le {\underline a}^2(s-t)}\beta_r \le  -Z_t\Big|\tau>t\Big] >0, 
\ee* 
contradiction~! To show $\P[\tau>t]>0$ it suffices to show $\Q[\tau>t]>0$ and we use the same time-change argument. Namely, $\Q[\tau>t] 
= 
\Q\Big[\min_{0\le u\le t}\beta_{\int_{0}^{u}\sigma_{r}^2\d r} > -Z_0\Big]
\ge
\Q\Big[\min_{0\le r\le {\overline a}^2t}\beta_r \le  -Z_0\Big] >0$. 
\end{proof} 

\subsection{Convergence of sequence of SDEs starting at different initial times}

This section justifies the convergence of a sequence of processes starting at different times. For simplicity, we restrict to driftless SDEs with scalar continuous paths in $\Cc:=C([0,T])$:
\begin{eqnarray*}
    \Lc(R^{n,\beta^n(t)}_0)=\nu,
    &\mbox{and}&
    R^{n,\beta^n(t)}_s
    =
    R^{n,\beta^n(t)}_0
    +
    \int_{0}^s a^n\big(r, R^{n,\beta^n(t)}_r \big)^{\frac{1}{2}}\1_{\{r \ge \beta^n(t)\}}\; \mathrm{d}W_r,
    ~~s \in [0,T].
\end{eqnarray*}
where, for all $n \ge 1$, $\beta^n:[0,T]\longrightarrow[0,T]$ and $a^n[0,T] \x \R \longrightarrow[\eta,\infty)$, for some $\eta>0$, are Borel bounded maps, Lipschitz in $x$ uniformly in $t$, and $\nu \in \Pc_q(\R)$, for some $q >1$.

\begin{proposition} \label{prop:limit_var_initial_time}
Assume $\beta^n\!\underset{n\to\infty}{\longrightarrow}\!\beta$ and $a^n\!\underset{n\to\infty}{\longrightarrow}\! a$ a.e. and let $\mu^n(\mathrm{d}\xb,\mathrm{d}t)\!:=\!\Lc(R^{n,\beta^n(t)})(\mathrm{d}\xb)\mathrm{d}t$. Then,
\\
{\rm (i)} the sequence of measures $(\mu^n)_{n \ge 1}$ is relatively compact in $\Wc_q$;
\\
{\rm (ii)} The limit $\mu$ of any convergent subsequence satisfies $\mu(\mathrm{d}\xb,\mathrm{d}t)=\Lc(R^{\beta(t)})(\mathrm{d}\xb)\;\mathrm{d}t$ where the process $R^{\beta(t)}$ satisfies, for a.e. $t \in [0,T]$,
\begin{eqnarray*}
    \Lc(R^{\beta(t)}_0)=\nu,
    &\mbox{and}&
    R^{\beta(t)}_s
    =
    R^{\beta(t)}_0
    +
    \int_{0}^s a\big(r, R^{\beta(t)}_r \big)^{\frac{1}{2}}\1_{\{r \ge \beta(t)\}}\; \mathrm{d}W_r,
    ~~\mbox{for all}~s \in [0,T].
\end{eqnarray*}
\end{proposition}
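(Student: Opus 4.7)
My plan is twofold: first establish $\Wc_q$-relative compactness of $(\mu^n)_{n \ge 1}$ via uniform moment bounds and Kolmogorov tightness, then identify any subsequential limit through the martingale problem satisfied by each $R^{n,\beta^n(t)}$.

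For (i), uniform boundedness of $a^n$ combined with its Lipschitz continuity in $x$ gives, via BDG, uniform bounds
\[\sup_{n,\, t \in [0,T]} \E\Big[\sup_{s \in [0,T]} |R^{n,\beta^n(t)}_s|^q\Big] < \infty\]
and a uniform Hölder modulus $\sup_{n,\,t}\E\big|R^{n,\beta^n(t)}_{s+h} - R^{n,\beta^n(t)}_s\big|^q \le C h^{q/2}$. Integrating in $t$, Kolmogorov's criterion on the path marginal together with compactness of $[0,T]$ yields tightness of $(\mu^n)$ in $\Pc(\Cc \times [0,T])$. To upgrade to $\Wc_q$-compactness, I would use de la Vallée--Poussin to select a convex $\Phi$ with $\Phi(r)/r^q \to \infty$ and $\int \Phi(|x|)\nu(\d x) < \infty$; the Orlicz form of BDG then yields $\sup_n \E_{\mu^n}[\Phi(\sup_s |\xb_s|)] < \infty$, giving uniform integrability of $\sup_s|\xb_s|^q$ and hence $\Wc_q$-relative compactness.

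For (ii), let $\mu$ be the limit of a convergent subsequence $\mu^{n_k}$. Its $[0,T]$-marginal is Lebesgue (inherited from each $\mu^n$), so $\mu$ disintegrates as $\mu(\d\xb,\d t) = \mu_t(\d\xb)\,\d t$. For each $n$, $R^{n,\beta^n(t)}$ is characterised by the martingale-problem identity, valid for every $\varphi \in C^2_b(\R)$, $0 \le s_1 < s_2 \le T$, bounded continuous $\Fc_{s_1}$-measurable $\Phi_0 : \Cc \to \R$, and $\psi \in C_b([0,T])$:
\[\int \Phi_0(\xb)\,\psi(t)\Big[\varphi(\xb_{s_2}) - \varphi(\xb_{s_1}) - \tfrac{1}{2}\int_{s_1}^{s_2} a^n(r,\xb_r)\,\1_{\{r \ge \beta^n(t)\}}\varphi''(\xb_r)\,\d r\Big]\mu^n(\d\xb,\d t) = 0.\]
I would pass to the limit $k \to \infty$ to obtain the same identity with $(a,\beta)$ replacing $(a^n,\beta^n)$; identifying the initial marginal via $\mu_t \circ \xb_0^{-1} = \nu$ for a.e.\ $t$ (from weak convergence applied to functionals of $\xb_0$), a standard density/disintegration argument then shows that $\mu_t$ solves the martingale problem for $R^{\beta(t)}$ for a.e.\ $t$, completing the identification.

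The principal obstacle is passing to the limit in the integral term, since $a^n \to a$ only a.e.\ and $\1_{\{r \ge \beta^n(t)\}}$ is discontinuous in $(r,t)$. I would first note that $\{(r,t) \in [0,T]^2 : r = \beta(t)\}$ is Lebesgue-null by Fubini, so $\1_{\{r \ge \beta^n(t)\}} \to \1_{\{r \ge \beta(t)\}}$ for a.e.\ $(r,t)$. On $\{r > \beta(t)\}$, the non-degeneracy $a \ge \eta > 0$ together with stability of non-degenerate diffusion densities under a.e.\ perturbation imply that the law of $\xb_r$ under $\mu_t$ admits a density, so a.e.-Lebesgue convergence of $a^n$ transfers to a.e.\ convergence under the image of $\mu \otimes \d r$ by $(\xb, r, t) \mapsto (r, \xb_r, t)$. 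Combined with uniform boundedness of $a^n$ and dominated convergence, this yields the required passage to the limit. The remaining piece $\int \Phi_0(\xb)\,\psi(t)\,[\varphi(\xb_{s_2}) - \varphi(\xb_{s_1})]\mu^n(\d\xb,\d t)$ is a bounded continuous functional of $(\xb,t)$ and converges immediately by weak convergence along the subsequence.
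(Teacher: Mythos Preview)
Your overall strategy matches the paper's --- tightness plus martingale-problem identification --- but there is a genuine gap in how you pass to the limit in the diffusion term. You argue that $a^n(r,\xb_r) \to a(r,\xb_r)$ a.e.\ under the \emph{limit} measure $\mu$, invoking a density for $\mu_t \circ \xb_r^{-1}$, and then appeal to dominated convergence. Two problems arise. First, the claim that $\xb_r$ has a density under $\mu_t$ is circular: at this stage $\mu_t$ is an abstract subsequential limit, not yet identified as a diffusion law, so you cannot invoke non-degenerate diffusion bounds for it. Second, and more fundamentally, the integral you must control is taken against $\mu^n$, not $\mu$: both the integrand $a^n\1_{\{r \ge \beta^n(t)\}}$ and the measure $\mu^n$ vary with $n$, so dominated convergence under $\mu$ alone does not suffice, and the limiting integrand $a(r,\cdot)$ need not be continuous in $x$ (it is only an a.e.\ limit), so a Portmanteau-type argument for the change of measure also fails.

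The paper resolves this with a different decoupling. It exploits a uniform density bound for the \emph{approximating} marginals $\Lc(R^{n,\beta^n(t)}_r)$ --- available from Fokker--Planck estimates since each $a^n$ is non-degenerate and Lipschitz in $x$ --- to show that for any fixed $k$,
\[
\limsup_{n\to\infty}\int |a^n - a^k|(r,\xb_r)\,\1_{\{|\xb_r|\le K\}}\,\mu^n(\d\xb,\d t)\,\d r
\;\le\; C\int_{[0,T]^2\times\R} |a - a^k|(r,x)\,\1_{\{|x|\le K\}}\,\d x\,\d r\,\d t,
\]
and the right-hand side vanishes as $k\to\infty$ by dominated convergence on Lebesgue space. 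Meanwhile, for each fixed $k$ the integrand built from $a^k$ is continuous in $\xb$, so weak convergence $\mu^n\to\mu$ applies directly (the indicator $\1_{\{r\ge\beta^n(t)\}}$ is handled via a.e.\ convergence of $\beta^n$). This ``freeze $k$, then send $k\to\infty$'' triangulation is exactly what your argument is missing, and it uses the density bound on $\mu^n$ (known) rather than on $\mu$ (not yet established).
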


\begin{proof} We proceed in 3 steps.

\medskip
\noindent \emph{Step {\rm 1}: Relative compactness.} Notice that $\sup_{n \ge 1} \sup_{t \in [0,T]}\E\Big[\sup_{r \in [0,T]} |R^{n,\beta^n(t)}_r|^q \Big]< \infty$ and
\begin{align*}
    \lim_{|s-s'| \to 0}\sup_{n \ge 1}\sup_{t \in [0,T]}\E \Big[ \big| R^{n,\beta^n(t)}_{s} - R^{n,\beta^n(t)}_{s'} \big|^q \Big]=0.
\end{align*}
Then, $\big(\mu^n(\mathrm{d}\xb,\mathrm{d}t)\big)_{n \ge 1}$ is relatively compact in $\Wc_q$, and converges to some limit $\mu(\mathrm{d}\xb,\mathrm{d}t)$, after possibly passing to a subsequence. 

\medskip
\noindent \emph{Step {\rm 2}} We next prove the following technical result which will be used in the next step to identify the nature of this limit $\mu$. Let $(\psi^n:[0,T] \x [0,T] \x \R \to \R)_{1 \le n \le \infty}$ be a sequence of bounded Borel maps continuous in the third variable s.t. $\Lim_{n \to \infty} \psi^n=\psi^\infty$, a.e. Then, we claim that
    \begin{eqnarray*}
        \Psi_n
        :=
        \E \bigg[\int_0^T\!\!\! \int_{\beta^n(t)}^T \psi^n\big(t,r, R_r^{n,\beta^n(t)} \big)\;\mathrm{d}r\;\mathrm{d}t \bigg] 
        &\underset{n\to\infty}{\longrightarrow}&
        \int_{[0,T]^2} \int_{\Cc} \psi^\infty(t,r,\xb_r) \1_{\{r \ge \beta(t)\}} \mu(\mathrm{d}\xb,\mathrm{d}t)\mathrm{d}r.
    \end{eqnarray*}
To see this, notice first that, by the a.e. convergence of $\beta^n$ to $\beta$ together with the continuity of $\psi^n$ in the third variable, it follows from the weak convergence of $(\mu^n)_{n \ge 1}$ that for all $k \ge 1$:
    \begin{align*}
        \Psi_n
        =
        \int_{[0,T]^2} \int_{\Cc} \psi^k(t,r,\xb_r) 
            \1_{\{r \ge \beta^n(t)\}} \mu^n(\mathrm{d}\xb,\mathrm{d}t)\mathrm{d}r
        \underset{n\to\infty}{\longrightarrow}
        \int_{[0,T]^2} \int_{\Cc} \psi^k(t,r,\xb_r) \1_{\{r \ge \beta(t)\}} \mu(\mathrm{d}\xb,\mathrm{d}t)\mathrm{d}r.
    \end{align*}
Then, in order to prove the required result, we now show that 
\begin{eqnarray}\label{eq:psinpsik}
        \lim_{k \to \infty}\lim_{n \to \infty}
        \varpi_{k,n}=0,
        &\mbox{where}&
        \varpi_{k,n}:=
        \int_{[0,T]^2} \int_{ \Cc} |\psi^n-\psi^k|(t,r,\xb_r) \1_{\{r \ge \beta^n(t)\}} \mu^n(\mathrm{d}\xb,\mathrm{d}t)\mathrm{d}r.
    \end{eqnarray}
 For an arbitrary $K>0$, we first estimate by the Chebychev inequality that 
    $$
    \varpi_{k,n}
    \le 
    \int_{[0,T]^2} \int_{\Cc} |\psi^n-\psi^k|(t,r,\xb_r) \1_{\{r \ge \beta^n(t)\}} \1_{\{|\xb_r| \le K\}} \mu^n(\mathrm{d}\xb,\mathrm{d}t)\mathrm{d}r+ C \;\; \frac{\E\big[\sup_{r \in [0,T]} |R^{n,\beta^n(t)}_r| \big]}{K}.
    $$
Since  $a^n$ is uniformly bounded from below above zero, it follows from \cite[Theorem 6.3.1--(i)]{FK-PL-equations} that we may find a constant $C>0$, independent of $n$, such that 
$$
\varpi_{k,n}
\le 
C\! \int_{[0,T]^2 \x [0,1] \x \R} \1_{\{|x| \le K\}}|\psi^n-\psi^k|(t,r,x) \;\mathrm{d}x\;\mathrm{d}r\;\mathrm{d}t + C \;\; \frac{\sup_{n \ge 1} \sup_{t \in [0,T]}\E\big[\sup_{r \in [0,T]} |R^{n,\beta^n(t)}_r| \big]}{K}.
$$
The required result \eqref{eq:psinpsik} follows by taking the limits $n \to \infty$, then $k \to \infty$, and finally $K \to \infty$.

\medskip
\noindent \emph{Step {\rm 3}: Identification of the limit.} 
    It is obvious that $\mu(\Cc \x [0,t])=t$ for all $t \in [0,T]$. Then, we can write $\mu(\mathrm{d}\xb,\mathrm{d}t)=\mu^t(\mathrm{d}\xb)\mathrm{d}t$ where the map $ t\in [0,T] \longmapsto \mu^t \in \Pc(\Cc)$ is Borel measurable. For any bounded map $g:[0,T] \to \R$ and twice differentiable map $f:\R \to \R$, it follows from It\^o's formula that
    $$
        \int\!\! g(t)f(\xb_r)\mu^n(\mathrm{d}\xb,\mathrm{d}t)
        =\!\!
        \int\!\! g(t)f(x)\nu(\mathrm{d}x)\;\mathrm{d}t
        +
        \frac{1}{2}\int_0^r\!\!\!  \int\!\! g(t) f''(\xb_u) a^n(u,\xb_u) \1_{\{u \ge \beta^n(t)\}} \mu^n(\mathrm{d}\xb,\mathrm{d}t)\;\mathrm{d}u,
    $$
for all $r \in [0,T]$ and $n \ge 1$. Notice that $\Lim_{n \to \infty} g(t) f''(x) a^n(u,x)=g(t) f''(x) a(u,x)$ for a.e. $(s,t,x,u)$. We may then take the limit in the last equality by using the technical result of Step 2, and obtain:
    $$
        \int g(t)f(\xb_r)\mu(\mathrm{d}\xb,\mathrm{d}t)
        =
        \int g(t)f(x)\nu(\mathrm{d}x)\;\mathrm{d}t
        + \frac{1}{2}
        \int_0^r \!\!\! \int g(t) f''(\xb_u) a(u,\xb_u) \1_{\{u \ge \beta(t)\}} \mu(\mathrm{d}\xb,\mathrm{d}t)\;\mathrm{d}u.
    $$
By the arbitrariness of $g$, this implies that
    $$
        \int_{\Cc} f(\xb_r)\mu^t(\mathrm{d}\xb)
        =
        \int_{\R} f(x)\nu(\mathrm{d}x)
        + \frac{1}{2}
        \int_0^r \!\!\! \int_{\Cc} f''(\xb_u) a(u,\xb_u) \1_{u \ge \beta(t)} \mu^t(\mathrm{d}\xb)\;\mathrm{d}u,
        ~\mbox{for a.e.}~t \in [0,T],
    $$
completing the proof by equivalence between the Fokker--Planck equation and the corresponding SDE.
\end{proof}

\end{appendix}

\bibliographystyle{plain}
%\bibliography{McKean-Vlasov_bib}
%\bibliography{bibliographyFabrice}

\bibliography{bibliographyFabrice_24-08-22}

\end{document}